\newtheorem{theorem}{Theorem}[section]
\newtheorem{lemma}[theorem]{Lemma}
\numberwithin{equation}{section}
\begin{document}

\title[an adaptive finite element DtN method]{An adaptive finite element DtN method for the elastic wave scattering problem in three dimensions}

\author{Gang Bao}
\address{School of Mathematical Sciences, Zhejiang University, Hangzhou 310027, China.}
\email{baog@zju.edu.cn}

\author{Peijun Li}
\address{Department of Mathematics, Purdue University, West Lafayette, Indiana 47907, USA.}
\email{lipeijun@math.purdue.edu}

\author{Xiaokai Yuan}
\address{School of Mathematical Sciences, Zhejiang University, Hangzhou 310027, China.}
\email{yuan170@zju.edu.cn}

\thanks{The work of GB is supported in part by an NSFC Innovative Group Fund (No.11621101). The research of PL is supported in part by the NSF grant DMS-1912704.}

\subjclass[2010]{65N30, 65N12, 78A45}

\keywords{Elastic wave equation, obstacle scattering problem, Dirichlet-to-Neumann operator, transparent boundary condition, adaptive finite element method, a posteriori error estimate.}

\begin{abstract}
Consider the elastic scattering of an incident wave by a rigid obstacle in three dimensions, which is formulated as an exterior problem for the Navier equation. By constructing a Dirichlet-to-Neumann (DtN) operator and introducing a transparent boundary condition, the scattering problem is reduced equivalently to a boundary value problem in a bounded domain. The discrete 
problem with the truncated DtN operator is solved by using the a posteriori error estimate based adaptive finite element method. The estimate takes account of both the finite element approximation error and the truncation error of the DtN operator, where the latter is shown to converge exponentially with respect to the truncation parameter. Moreover, the generalized Woodbury matrix identity is utilized to solve the resulting linear system efficiently. Numerical experiments are presented to demonstrate the superior performance of the proposed method.
\end{abstract}

\maketitle

\section{Introduction}

As a basic problem in classical scattering theory, the obstacle scattering problem refers to as the scattering of a time-harmonic wave by an impenetrable medium of compact support. It plays a fundamental role in diverse scientific areas such as radar and sonar, geophysical exploration, medical imaging, and nondestructive testing. The obstacle scattering problems have been extensively investigated in both the engineering and mathematical communities. A great number of numerical and mathematical results are available, especially for acoustic and electromagnetic waves \cite{CK-83, M-03, N-01}. Recently, the scattering problems for elastic waves have received ever-increasing attention due to the significant applications in geophysics, 
seismology, and elastography \cite{BHSY-jmpa18, BPT-mc10, CXZ-mc16, LWWZ-ip16}. For example, in medical diagnostics, by mapping the elastic properties and stiffness of soft tissues, they are able to give diagnostic information about the presence or status of disease. Compared with acoustic and electromagnetic waves, the scattering problems for elastic waves remain many issues on theoretical analysis and numerical computation because of the complexity of the model equation \cite{C-88, LL-86}. 

The elastic obstacle scattering problem is imposed in an unbounded domain, which needs to be truncated into a bounded domain in practice. Therefore, an appropriate boundary condition is required on the boundary of the truncated domain so that no artificial wave reflection occurs. Such a boundary condition is called a non-reflecting boundary condition or transparent boundary condition (TBC). Despite the large amount of work done so far, it is still one of the important and active research subjects on developing effective non-reflecting boundary conditions in the area of computational wave propagation \cite{BT-cpam80, EM-mc77, GG-wm03, GK-wm90, GK-jcp95, MK-jcp04, T-an99}. In this work, we construct a Dirichlet-to-Neumann (DtN) operator and develop a TBC for solving the elastic obstacle scattering problem in three dimensions. Based on the Helmholtz decomposition, the scattered field of the elastic displacement is split into the compressional and shear wave components which satisfy the Helmholtz equation and 
the Maxwell equation, respectively. Therefore, the DtN operator for the elastic wave equation can be obtained from the well-studied DtN operators for the Helmholtz and Maxwell equations. Since the TBC is exact, the artificial boundary could be put as close as possible to the obstacle in order to reduce the computational complexity \cite{JLZ-cicp13, LZZ-csam20}.  

To design an efficient numerical method, there are two more issues which need to be considered. The first issue concerns the truncation of the DtN operator. The nonlocal DtN operator is given as an infinite series, which has to be truncated into a sum of finitely many terms in actual computation. However, it is known that the convergence of the truncated DtN operator could be arbitrarily slow in the operator norm \cite{HN-2011-jcam}. From the computational viewpoint, it is important to answer 
the question how many terms are required in the summation in order to maintain a certain level of accuracy. Second, the solution may have local singularity when the obstacle has edges. The mesh should be fine around the nonsmooth part of the obstacle in order to capture the singularity of the solution; while the mesh could be coarse in other part of the domain where the solution is smooth. Hence, it is crucial to design an algorithm for mesh modification which can distribute equally the computational effort and optimize the computation. 

In this paper, we propose an adaptive finite element method with the truncated DtN operator to overcome the two difficulties mentioned above. Specifically, we consider the scattering of a plane wave by an elastically rigid obstacle in three dimensions. The exterior domain is assumed to be filled with a homogeneous and isotropic elastic medium. The elastic wave propagation is governed by the Navier equation. Based on the TBC, the exterior scattering problem is formulated equivalently into a boundary value problem in a bounded domain. The discrete problem is solved by using the finite element method with the truncated DtN operator. Based on the Helmholtz decomposition, a new duality argument is developed to obtain an a posteriori error estimate  between the solution  of the original scattering problem and the discrete problem. The a posteriori error estimate takes account of both the finite element approximation error and the truncation error of the DtN operator, where the latter is shown to decay 
exponentially with respect to the  truncation parameter. The estimate is used to design the adaptive finite element algorithm to choose elements for refinements and to determine the truncation parameter. The stiffness matrix is made of a sparse, real and symmetric matrix, which comes from the discretization of the variational formulation in the interior of the domain, and a dense low rank matrix given by vector products, which arises from the nonlocal TBC. The generalized Woodbury matrix identity is utilized  to solve the resulting linear system efficiently. Numerical experiments are presented to demonstrate the superior performance of the proposed method. 

Recently, the adaptive finite element DtN method has been developed to solve many acoustic and electromagnetic scattering problems, such as the obstacle scattering problems \cite{bzh-2020, jllz-2017-ccp}, the diffraction grating problems \cite{WBLLW-sinum15, JLWWZ-18}, and the open cavity scattering problem \cite{YBL-2020}. This paper is a non-trivial extension of our previous work on the two-dimensional elastic obstacle scattering problem \cite{LY-2Dobstacle}. Apparently, the analysis is more sophisticated and the computation is more intensive for the three-dimensional problem. This work adds a significant contribution to designing efficient computational methods for solving the elastic wave scattering problems.

The paper is organized as follows. In Section \ref{Section_pf}, the elastic wave equation is introduced; the boundary value problem is formulated by using the TBC; the corresponding weak formulation is discussed. In Section \ref{Section_dp}, the discrete problem is considered by using the finite element approximation with the truncated DtN operator. Section \ref{Section_pea} is devoted to the a posteriori error analysis and serves as the basis of the adaptive algorithm. In Section \ref{Section_ne}, we discuss the numerical implementation of the adaptive algorithm, the construction of the stiffness matrix, and an efficient solver for the linear system; two numerical examples are presented to illustrate the performance of the proposed method. The paper concludes with some general remarks in Section \ref{Section_c}. 

\section{Problem formulation}\label{Section_pf}

Let $D\subset\mathbb R^3$ be an elastically rigid obstacle with Lipschitz continuous boundary $\partial D$. Denote by $\nu$ the unit outward normal vector on $\partial D$. The exterior domain $\mathbb{R}^3\setminus \overline{D}$ is assumed to be filled with a homogeneous and isotropic elastic medium with a unit mass density. Let $B_R=\left\{\boldsymbol{x}\in\mathbb{R}^3: 
|\boldsymbol{x}|<R\right\}$ and $B_{R'}=\left\{\boldsymbol{x}\in\mathbb{R}^3: |\boldsymbol{x}|<R'\right\}$ be balls with radii $R$ and $R'$, where $0<R'<R$. Denote by $\Gamma_R$ and $\Gamma_{R'}$ the surfaces of $B_R$ and $B_{R'}$, respectively. Let $\Omega=B_{R}\setminus\overline{D}$ be the bounded domain enclosed by the surfaces $\partial D$ and $\Gamma_R$. 

Let the obstacle be illuminated by an incident wave $\boldsymbol{u}^{\rm inc}$, which can be a point source or a plane wave. Due to the interaction between the incident wave and the obstacle, the displacement of the scattered field $\boldsymbol{u}$ satisfies the elastic wave equation
\begin{equation}\label{section2_problem11}
\mu\Delta\boldsymbol{u}+(\lambda+\mu)\nabla\nabla\cdot\boldsymbol{u}+\omega^2\boldsymbol{u}=0 \quad {\rm in} ~ 
\mathbb{R}^3\setminus\overline{D},
\end{equation}
where $\omega>0$ is the angular frequency and $\lambda, \mu$ are the Lam\'e constants satisfying $\mu>0, \lambda+\mu>0$. Since the obstacle is assumed to be elastically rigid, the displacement of the total field vanishes on the surface of the obstacle, i.e., we have
\begin{equation*}
\boldsymbol{u}=\boldsymbol{g}=-\boldsymbol{u}^{\rm inc}\quad {\rm on} ~ \partial D.
\end{equation*}

Introduce the Helmholtz decomposition 
\begin{equation}\label{section2_problem2}
\boldsymbol{u}=\nabla\phi+\nabla\times\boldsymbol{\psi}, \qquad \nabla\cdot\boldsymbol{\psi}=0,
\end{equation}
where $\phi$ and $\boldsymbol{\psi}$ are called the potential functions. Substituting \eqref{section2_problem2} into \eqref{section2_problem11}, we may verify that the scalar potential $\phi$ satisfies the Helmholtz equation and the vector potential $\boldsymbol\psi$ satisfies the Maxwell equation in $\mathbb R^3\setminus\overline D$, i.e., 
\begin{equation*}
\Delta \phi+\kappa_p^2 \phi=0, \quad \nabla\times(\nabla\times\boldsymbol{\psi})-\kappa_s^2\boldsymbol{\psi}=0,
\end{equation*}
where 
\[
\kappa_p=\omega/(\lambda+2\mu)^{1/2}, \quad \kappa_s=\omega/\mu^{1/2}
\]
are known as the the compressional wavenumber and the shear wavenumber, respectively. In addition, the potential functions $\phi$ and $\boldsymbol\psi$ are required to satisfy the Sommerfeld radiation condition and the Silver--M\"{u}ller radiation condition, respectively:  
\begin{equation*}
\lim\limits_{\rho\rightarrow\infty} \rho\left(\partial_{\rho}\phi-{\rm i}\kappa_p\phi\right)=0,\quad
\lim\limits_{\rho\rightarrow\infty} \left(\left(\nabla\times\boldsymbol{\psi}\right)\times\boldsymbol{x}-
{\rm i}\kappa_s \rho\boldsymbol{\psi}\right)=0,\quad \rho=|\boldsymbol{x}|,
\end{equation*}
which is known as the Kupradze--Sommerfeld radiation condition for the elastic wave equation. 

The obstacle scattering problem is defined in the unbounded domain $\mathbb{R}^3\setminus\overline{D}$. It needs to be reduced equivalently into the bounded domain $\Omega$. Next we introduce a transparent boundary condition on $\Gamma_R$. Due to the page limit, the details are given as supplementary materials. Define a boundary operator for the  displacement of the scattered wave 
\[
\mathscr{D}\boldsymbol{u}:=\mu\partial_{\rho} \boldsymbol{u}+(\lambda+\mu)(\nabla\cdot\boldsymbol{u})\boldsymbol e_{\rho}
\quad {\rm on} ~ \Gamma_R.
\]
In the spherical coordinates, the scattered field admits the following expansion in $\mathbb R^3\setminus\overline{B_R}$:
\[
\boldsymbol{u}(R, \theta, \varphi)=\sum\limits_{n=0}^{\infty}\sum\limits_{m=-n}^n
u_{1n}^m\boldsymbol U_n^m(\theta, \varphi)+u_{2n}^m\boldsymbol V_n^m(\theta, \varphi)+u_{3n}^m X_n^m(\theta, \varphi)\boldsymbol e_{\rho}.
 \]
The transparent boundary condition is 
\begin{equation}\label{section2_TBC}
\mathscr{D}\boldsymbol{u}=\mathscr{T}\boldsymbol{u}:=\sum\limits_{n=0}^{\infty}\sum\limits_{m=-n}^n
b_{1n}^m \boldsymbol U_n^m+b_{2n}^m \boldsymbol V_n^m+b_{3n}^m X_n^m \boldsymbol e_{\rho}\quad\text{on} ~ \Gamma_R,
\end{equation}
where the Fourier coefficients $\boldsymbol b_n^m=(b_{1n}^m, b_{2n}^m, b_{3n}^m)^\top$ and $\boldsymbol u_n^m=(u_{1n}^m, u_{2n}^m, u_{3n}^m)^\top$ are connected by $\boldsymbol b_n^m=M_n \boldsymbol u_n^m$ with the $3\times 3$ matrix $M_n$ being given in Appendix \ref{appendixDtN}, and $\mathscr{T}$ is called the DtN operator.

Based on the transparent boundary condition \eqref{section2_TBC}, the scattering problem can be reformulated as the variational problem: find $\boldsymbol{u}\in \boldsymbol{H}^1(\Omega)$ with $\boldsymbol{u}=\boldsymbol{g}$
on $\partial D$ such that
\begin{equation}\label{section2_variational1}
b(\boldsymbol{u}, \boldsymbol{v})=0 \quad \forall\, \boldsymbol{v}\in \boldsymbol{H}_{\partial D}^1(\Omega),
\end{equation}
where the sesquilinear form $b: \boldsymbol{H}^1(\Omega)\times \boldsymbol{H}^1(\Omega)\rightarrow \mathbb{C}$ is defined as
\begin{align*}
b(\boldsymbol{u}, \boldsymbol{v})=\mu\int_{\Omega}\nabla\boldsymbol{u}:\nabla\overline{\boldsymbol{v}}{\rm d}\boldsymbol{x}
+(\lambda+\mu)\int_{\Omega}\left(\nabla\cdot\boldsymbol{u}\right)\left(\nabla\cdot\overline{\boldsymbol{v}}\right){\rm d}\boldsymbol{x} \notag\\
-\omega^2\int_{\Omega} \boldsymbol{u}\cdot\overline{\boldsymbol{v}}{\rm d}\boldsymbol{x}
-\int_{\Gamma_R} \mathscr{T}\boldsymbol{u}\cdot\overline{\boldsymbol{v}}{\rm d}s.
\end{align*}
Here $A:B={\rm tr}\left(AB^\top\right)$ is the Frobenius inner product of square matrices $A$ and $B$. 

It is shown in \cite{LY-ipi} that the variational problem admits a unique weak solution $\boldsymbol u\in \boldsymbol H^1(\Omega)$, which satisfies the estimate
\begin{equation}\label{section2_stability}
\|\boldsymbol{u}\|_{\boldsymbol{H}^1(\Omega)}\lesssim \|\boldsymbol{g}\|_{\boldsymbol{H}^{1/2}(\partial D)}
\lesssim \|\boldsymbol{u}^{\rm inc}\|_{\boldsymbol{H}^1(\Omega)}.
\end{equation}
Hereafter, the notation $a\lesssim b$ stands for $a \leq Cb$, where $C > 0$ is a generic constant whose value is not required and may change step by step in the proofs.

Since the variational problem is well-posed, it follows from the general theory in \cite{BA-AP-1973} that there exists a constant $\gamma>0$ such that the following inf-sup condition holds: 
\[
\sup\limits_{0\neq\boldsymbol{v}\in \boldsymbol{H}^1(\Omega)}\frac{|b(\boldsymbol{u}, \boldsymbol{v})|}{\|\boldsymbol{v}\|_{H^1(\Omega)}}\geq \gamma \|\boldsymbol{u}\|_{H^1(\Omega)}\quad \forall\, \boldsymbol{u}\in \boldsymbol{H}^1(\Omega).
\]

\section{The discrete problem}\label{Section_dp}

The DtN operator $\mathscr{T}$ in \eqref{section2_TBC} is given as an infinite series, which needs to be truncated into a sum of 
finitely many terms in computation. Given a sufficiently large $N$, define the truncated DtN operator
\begin{equation}\label{section3_truncDtN}
\mathscr{T}_N\boldsymbol{u}=\sum\limits_{n=0}^{N} \sum\limits_{m=-n}^n b_{1n}^m \boldsymbol U_n^m+b_{2n}^m \boldsymbol V_n^m+b_{3n}^m X_n^m \boldsymbol e_{\rho}. 
\end{equation}
The truncated variational problem is to find $\boldsymbol{u}_N\in \boldsymbol{H}^1(\Omega)$ with $\boldsymbol{u}_N=\boldsymbol{g}$ on $\partial D$ such that
\begin{equation}\label{variationalinter}
b_N(\boldsymbol{u}_N, \boldsymbol{v})=0 \quad \forall\, \boldsymbol{v}\in \boldsymbol{H}_{\partial D}^1(\Omega),
\end{equation}
where the sesquilinear form $b_N: \boldsymbol{H}^1(\Omega)\times \boldsymbol{H}^1(\Omega)\rightarrow \mathbb{C}$ is given by 
\begin{eqnarray*}
b_N(\boldsymbol{u}, \boldsymbol{v})=\mu\int_{\Omega}\nabla\boldsymbol{u}:\nabla\overline{\boldsymbol{v}}{\rm d}\boldsymbol{x}
+(\lambda+\mu)\int_{\Omega}\left(\nabla\cdot\boldsymbol{u}\right)\left(\nabla\cdot\overline{\boldsymbol{v}}\right){\rm d}\boldsymbol{x} \notag\\
-\omega^2\int_{\Omega} \boldsymbol{u}\cdot\overline{\boldsymbol{v}}{\rm d}\boldsymbol{x}-\int_{\Gamma_R} 
\mathscr{T}_N\boldsymbol{u}\cdot\overline{\boldsymbol{v}}{\rm d}s.
\end{eqnarray*}

Let $\mathcal{M}_h$ be a regular tetrahedral mesh of $\Omega$, where $h$ denotes the maximum diameter of all the elements in $\mathcal{M}_h$. For simplicity, we assume that the surfaces $\partial D$ and $\Gamma_R$ are polyhedral and ignore the approximation error on the surfaces, which allows us to focus on deducing the a posteriori error estimate. Thus any
face $e\in \mathcal{M}_h$ is a subset of $\partial \Omega$ if it has three
boundary vertices. 

Let $\boldsymbol{V}_h\subset \boldsymbol{H}^1(\Omega)$ be a conforming finite element space, i.e.,
\[
\boldsymbol{V}_h:=\left\{\boldsymbol{v}\in C(\overline{\Omega})^3: \boldsymbol{v}|_T\in P_m(T)^3 ~ \forall\, T\in \mathcal{M}_h\right\},
\]
where $m$ is a positive integer and $P_m(T)$ denotes the set of all polynomials of degree no more than $m$. The finite element approximation to the variational problem \eqref{variationalinter} is to find $\boldsymbol{u}_N^h\in\boldsymbol{V}_h$ with $\boldsymbol{u}_N^h=\boldsymbol{g}^h$ on $\partial D$ such that
\begin{equation}\label{section3_dis1}
	b_N(\boldsymbol{u}_N^h, \boldsymbol{v}_N^h)=0\quad \forall\, \boldsymbol{v}^h\in\boldsymbol{V}_{h, \partial D},
\end{equation}
where $\boldsymbol{g}^h$ is the finite element approximation of $\boldsymbol{g}$ and $\boldsymbol{V}_{h, \partial D}=\left\{\boldsymbol{v}\in\boldsymbol{V}_h: \boldsymbol{v}=0\,{\rm on}\,\partial D\right\}$.

Following the idea in \cite{HN-2011-jcam} and the discussion of \cite{LY-ipi}, we may show that for sufficiently large 
$N$ the variational problem \eqref{variationalinter} is well-posed.  Meanwhile, for sufficiently small $h$, the discrete inf-sup condition of the sesquilinear form $b_N$ may also be established by following the approach in \cite{S-mc74}. Based on the general theory in  \cite{BA-AP-1973}, the truncated variational problem \eqref{section3_dis1} can be shown to  have a unique solution $u_N^h\in\boldsymbol{V}_h.$ The details are omitted since our focus is the a posteriori error estimate and the convergence analysis for the truncated DtN operator.  

\section{The a posteriori error analysis}\label{Section_pea}

For any tetrahedral element $T\in \mathcal{M}_h$, denoted by $h_T$ its diameter. Let $\mathcal{B}_F$ denote the set of all the face of $T$ and $h_F$ be the size of the face $F$. For any interior face $F$ which is the common face of tetrahedral element $T_1, T_2\in\mathcal{M}_h$, define the jump residual across $F$ as
\[
J_F=-\left[\mu\nabla\boldsymbol{u}_N^h\cdot\nu_1+(\lambda+\mu)(\nabla\cdot\boldsymbol{u}_N^h) \nu_1+\mu\nabla\boldsymbol{u}_N^h\cdot\nu_2+(\lambda+\mu)(\nabla\cdot\boldsymbol{u}_N^h) \nu_2\right],
\]
where $\nu_j$ is the unit outward normal vector on the boundary of $T_j, j=1,2.$ For any boundary edge $F\subset \Gamma_R$, the jump residual is
\[
J_F=2\left(\mathscr{T}_N\boldsymbol{u}_N^h-\mu \nabla\boldsymbol{u}_N^h\cdot\boldsymbol e_{\rho}-(\lambda+\mu)(\nabla\cdot\boldsymbol{u}_N^h)\boldsymbol e_{\rho}\right),
\]
For any tetrahedral element $T\in \mathcal{M}_h$, define the local error estimator as
\[
\eta_{T}=h_T\|\mathscr{R} \boldsymbol{u}_N^h\|_{\boldsymbol{L}^2(T)}+\left(\frac{1}{2}\sum\limits_{F\in\partial T}h_F \|J_F\|^2_{\boldsymbol{L^2}(F)}\right)^{1/2},
\]
where $\mathscr{R}$ is the residual operator given by 
\[
\mathscr{R}\boldsymbol{u}_N^h=\mu\Delta\boldsymbol{u}_N^h+(\lambda+\mu)\nabla(\nabla\cdot\boldsymbol{u}_N^h)
+\omega^2\boldsymbol{u}_N^h.
\]

Introduce the following weighted norm $\vvvert \cdot\vvvert_{\boldsymbol{H}^1(\Omega)}$:
\begin{equation}\label{section4_weight}
\vvvert \boldsymbol{u} \vvvert_{\boldsymbol{H}^1(\Omega)}^2=\mu\int_{\Omega}|\nabla\boldsymbol{u}|^2{\rm d}\boldsymbol{x}
+(\lambda+\mu)\int_{\Omega}|\nabla\cdot\boldsymbol{u}|^2{\rm d}\boldsymbol{x}+\omega^2 \int_{\Omega}|\boldsymbol{u}|^2{\rm d}\boldsymbol{x}.
\end{equation}
It is easy to check that for any $\boldsymbol{u}\in \boldsymbol{H}^1(\Omega)$ we have 
\begin{equation*}
	\min(\mu, \omega^2)\|\boldsymbol{u}\|_{\boldsymbol{H}^1(\Omega)}^2\leq
	\vvvert \boldsymbol{u} \vvvert^2_{\boldsymbol{H}^1(\Omega)}
	\leq \max(2\lambda+3\mu, \omega^2)\|\boldsymbol{u}\|^2_{\boldsymbol{H}^1(\Omega)},
\end{equation*}
which implies that the norms $\|\cdot\|_{\boldsymbol H^1(\Omega)}$ and  $\vvvert \cdot\vvvert_{\boldsymbol{H}^1(\Omega)}$ are equivalent.

Now, let us state the main result of this paper.

\begin{theorem}\label{Mainthm}
Let $\boldsymbol{u}$ and $\boldsymbol{u}_N^h$ be the solutions of the variational problems \eqref{section2_variational1} and \eqref{section3_dis1}, respectively. Let $\boldsymbol{\xi}=\boldsymbol{u}-\boldsymbol{u}_N^h$. Then for sufficiently large $N$,
the following a posteriori error estimate holds:
\[
\vvvert \boldsymbol{\xi} \vvvert_{\boldsymbol{H}^1(\Omega)}\lesssim \left(\sum\limits_{T\in \mathcal{M}_h} \eta_T^2\right)^{1/2}+\|\boldsymbol{g}-\boldsymbol{g}^h\|_{\boldsymbol{H}^{1/2}(\partial D)}+
N \left(\frac{R'}{R}\right)^{N}\|\boldsymbol u^{\rm inc}\|_{\boldsymbol{H}^1(\Omega)}.
\]
\end{theorem}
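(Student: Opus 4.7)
The plan is to reduce the a posteriori bound to estimating a single residual functional using the continuous inf-sup condition for $b$ stated at the end of Section \ref{Section_pf}, and then to split that functional into a standard finite-element residual part and a DtN truncation part. Since $\boldsymbol{\xi}$ does not vanish on $\partial D$, I would first lift the boundary defect to a function $\tilde{\boldsymbol{g}}\in\boldsymbol{H}^1(\Omega)$ with $\tilde{\boldsymbol{g}}=\boldsymbol{g}-\boldsymbol{g}^h$ on $\partial D$ and $\vvvert\tilde{\boldsymbol{g}}\vvvert_{\boldsymbol{H}^1(\Omega)}\lesssim \|\boldsymbol{g}-\boldsymbol{g}^h\|_{\boldsymbol{H}^{1/2}(\partial D)}$, and then apply the inf-sup estimate to $\boldsymbol{\xi}-\tilde{\boldsymbol{g}}\in\boldsymbol{H}^1_{\partial D}(\Omega)$, which reduces the task to controlling $|b(\boldsymbol{\xi},\boldsymbol{v})|$ over unit-norm test functions.

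For any $\boldsymbol{v}\in\boldsymbol{H}^1_{\partial D}(\Omega)$, since $b(\boldsymbol{u},\boldsymbol{v})=0$ and $b_N(\boldsymbol{u}_N^h,\boldsymbol{v}_h)=0$ for every $\boldsymbol{v}_h\in\boldsymbol{V}_{h,\partial D}$, algebraic manipulation gives
$$b(\boldsymbol{\xi},\boldsymbol{v}) \;=\; -\,b_N(\boldsymbol{u}_N^h,\boldsymbol{v}-\boldsymbol{v}_h) \;+\; \int_{\Gamma_R}(\mathscr{T}_N-\mathscr{T})\boldsymbol{u}_N^h\cdot\overline{\boldsymbol{v}}\,{\rm d}s.$$
Choosing a Scott--Zhang-type quasi-interpolant $\boldsymbol{v}_h$ vanishing on $\partial D$ and performing tetrahedron-wise integration by parts against the Navier operator $\mathscr{R}$, the first term produces the interior volume residual $\mathscr{R}\boldsymbol{u}_N^h$ and the face jumps $J_F$, which after the standard local interpolation estimates $\|\boldsymbol{v}-\boldsymbol{v}_h\|_{L^2(T)}\lesssim h_T\|\boldsymbol{v}\|_{H^1(\tilde T)}$ and the trace version on faces is controlled by $(\sum_T \eta_T^2)^{1/2}\vvvert\boldsymbol{v}\vvvert_{\boldsymbol{H}^1(\Omega)}$.

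For the boundary integral I would write $\boldsymbol{u}_N^h=\boldsymbol{u}-\boldsymbol{\xi}$ and split the tail into a principal piece $(\mathscr{T}_N-\mathscr{T})\boldsymbol{u}$ and a perturbation $(\mathscr{T}_N-\mathscr{T})\boldsymbol{\xi}$. The principal piece is attacked via the Helmholtz decomposition \eqref{section2_problem2}: the scalar potential $\phi$ and the vector potential $\boldsymbol{\psi}$ of $\boldsymbol{u}$ admit convergent vector spherical harmonic expansions in $\mathbb{R}^3\setminus\overline{B_{R'}}$ (assuming $D\subset B_{R'}$), so their Fourier coefficients on $\Gamma_R$ and $\Gamma_{R'}$ are linked through ratios of spherical Hankel functions $h_n^{(1)}(\kappa R)/h_n^{(1)}(\kappa R')$. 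Combining this with the explicit form of $M_n$ from Appendix \ref{appendixDtN}, the large-$n$ asymptotics $h_n^{(1)}(z)\sim (2n-1)!!/z^{n+1}$, and the stability estimate \eqref{section2_stability}, the tail from $n=N+1$ onward is bounded by $N(R'/R)^N\|\boldsymbol{u}^{\rm inc}\|_{\boldsymbol{H}^1(\Omega)}$. The perturbation piece is of order $o(1)\vvvert\boldsymbol{\xi}\vvvert_{\boldsymbol{H}^1(\Omega)}$ as $N\to\infty$ and can be absorbed into the left-hand side, which is precisely why $N$ must be sufficiently large.

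The main obstacle will be the analysis of the principal truncation piece. The $3\times 3$ matrix $M_n$ couples three vector spherical harmonic modes and mixes the compressional and shear wavenumbers $\kappa_p,\kappa_s$, so obtaining a uniform operator-norm bound on the tail of $\mathscr{T}-\mathscr{T}_N$ requires a delicate asymptotic expansion of the entries of $M_n$ that goes well beyond the scalar Helmholtz case in \cite{HN-2011-jcam}; this is also the step where the extra polynomial factor $N$ in $N(R'/R)^N$ emerges, reflecting the first-order differentiation built into $\mathscr{D}$ and the derivatives of the Hankel functions appearing in $M_n$.
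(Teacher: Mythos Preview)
Your overall structure is reasonable and your treatment of the finite-element residual term and the principal truncation piece $(\mathscr{T}_N-\mathscr{T})\boldsymbol{u}$ matches the paper's Lemma~\ref{Firsttwo1} closely. The gap is in the perturbation piece. You claim that
\[
\int_{\Gamma_R}(\mathscr{T}-\mathscr{T}_N)\boldsymbol{\xi}\cdot\overline{\boldsymbol{v}}\,{\rm d}s
= o(1)\,\vvvert\boldsymbol{\xi}\vvvert_{\boldsymbol{H}^1(\Omega)}\|\boldsymbol{v}\|_{\boldsymbol{H}^1(\Omega)}
\quad\text{as }N\to\infty,
\]
uniformly in $\boldsymbol{\xi}$ and $\boldsymbol{v}$, so that the term can be absorbed into the left side. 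This is exactly what fails: by \eqref{Mnestimate} the entries of $M_n$ grow like $n$, which is the same weight that defines the $H^{1/2}(\Gamma_R)$--$H^{-1/2}(\Gamma_R)$ pairing, so Cauchy--Schwarz gives only an $O(1)$ bound on the tail, not $o(1)$. The paper flags this explicitly in the introduction (citing \cite{HN-2011-jcam}): the truncated DtN operator can converge arbitrarily slowly in operator norm. Your inf-sup route requires precisely this operator-norm smallness, and it is not available.

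The paper's proof avoids this by \emph{not} using the inf-sup condition. Instead it expands $\vvvert\boldsymbol{\xi}\vvvert^2$ as in \eqref{section4_key}, which isolates an $\|\boldsymbol{\xi}\|_{\boldsymbol{L}^2(\Omega)}^2$ term, and then controls that term by an Aubin--Nitsche duality argument: one introduces the adjoint solution $\boldsymbol{p}$ of \eqref{section4_dual1problem} and shows (Lemma~\ref{3Ddual_mainlemma}) that
\[
\left|\int_{\Gamma_R}(\mathscr{T}-\mathscr{T}_N)\boldsymbol{\xi}\cdot\overline{\boldsymbol{p}}\,{\rm d}s\right|\lesssim \frac{1}{N}\|\boldsymbol{\xi}\|_{\boldsymbol{H}^1(\Omega)}^2.
\]
The point is that $\boldsymbol{p}$ is not an arbitrary $H^1$ test function: it solves an elliptic problem with right-hand side $\boldsymbol{\xi}$, and the paper exploits the Helmholtz decomposition of $\boldsymbol{p}$ in the shell $B_R\setminus\overline{B_{R'}}$ (Lemmas~\ref{step1}--\ref{3Ddual_estimatep}) to show that its high Fourier modes on $\Gamma_R$ decay fast enough to beat the $O(n)$ growth of $M_n$. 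This duality step is the technical core of the proof, and it is missing from your outline.
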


It can be seen from the theorem that the a posteriori error estimate consists of three parts: the first two parts arise from the finite element discretization error; the third part accounts for the truncation error of the DtN operator. Apparently, the DtN truncation error decreases exponentially with respect to $N$ since
$R'<R$.

Using \eqref{section4_weight} and the integration by parts, we obtain 
\begin{align}\label{section4_key}
\vvvert\boldsymbol{\xi}\vvvert^2_{\boldsymbol{H}^{1}(\Omega)}&= \mu\int_{\Omega} \nabla\boldsymbol{\xi}:\nabla\overline{\boldsymbol{\xi}}{\rm d}\boldsymbol{x}+(\lambda+\mu)\int_{\Omega}(\nabla\cdot\boldsymbol{\xi}
)(\nabla\cdot\overline{\boldsymbol{\xi}}){\rm d}\boldsymbol{x}+\omega^2 \int_{\Omega}
\boldsymbol{\xi}\cdot\overline{\boldsymbol{\xi}}{\rm d}\boldsymbol{x}\notag\\
&= \Re b(\boldsymbol{\xi}, \boldsymbol{\xi})+2\omega^2 \int_{\Omega}\boldsymbol{\xi}\cdot\overline{\boldsymbol{\xi}}{\rm
d}\boldsymbol{x} +\Re\int_{\Gamma_R}\mathscr{T}\boldsymbol{\xi}\cdot \overline{\boldsymbol{\xi}}{\rm d}s\notag\\
&= \Re b(\boldsymbol{\xi}, \boldsymbol{\xi})+\Re\int_{\Gamma_R}\left(\mathscr{T} -\mathscr{T}_N\right)\boldsymbol{\xi}\cdot\overline{\boldsymbol{\xi}}{\rm d}s +2\omega^2 \int_{\Omega}\boldsymbol{\xi}\cdot\overline{\boldsymbol{\xi}}{\rm
d}\boldsymbol{x} +\Re\int_{\Gamma_R}\mathscr{T}_N \boldsymbol{\xi}\cdot\overline{\boldsymbol{\xi}}{\rm d}s.
\end{align}
To prove Theorem \ref{Mainthm}, it suffices to estimate the four terms given on the right-hand side of \eqref{section4_key}. 

The following lemma concerns the trace theorem. The proof is standard and omitted for brevity.  

\begin{lemma}\label{Poincare}
For any $u\in H^{1}(\Omega)$, the following estimates hold: 
\[
\|u\|_{H^{1/2}(\Gamma_R)}\lesssim \|u\|_{H^{1}(\Omega)},\quad \|u\|_{H^{1/2}(\Gamma_{R'})}\lesssim
\|u\|_{H^{1}(\Omega)}.
\]
\end{lemma}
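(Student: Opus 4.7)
The plan is to reduce both inequalities to the classical trace theorem for bounded Lipschitz domains, which asserts that the trace map $H^1(U)\to H^{1/2}(\partial U)$ is continuous whenever $U$ is a bounded Lipschitz domain. No hidden subtlety is involved; I will simply exhibit, in each case, an appropriate Lipschitz subdomain of $\Omega$ whose boundary contains the surface in question.

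For the bound on $\Gamma_R$, observe that because $\partial D$ is Lipschitz and $\Gamma_R$ is smooth, the annular region $\Omega=B_R\setminus\overline D$ is itself a bounded Lipschitz domain with $\Gamma_R\subset\partial\Omega$. The classical trace theorem applied on $\Omega$ yields $\|u\|_{H^{1/2}(\partial\Omega)}\lesssim\|u\|_{H^1(\Omega)}$. Since $\Gamma_R$ is a connected component of $\partial\Omega$ (disjoint from $\partial D$), restriction to $\Gamma_R$ is bounded from $H^{1/2}(\partial\Omega)$ into $H^{1/2}(\Gamma_R)$, and the first estimate follows at once.

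For the bound on the interior sphere $\Gamma_{R'}$, the natural setup places $D$ inside $B_{R'}$, so the spherical shell $\Omega'=B_R\setminus\overline{B_{R'}}$ is a smooth bounded Lipschitz subdomain of $\Omega$ whose boundary $\partial\Omega'=\Gamma_R\cup\Gamma_{R'}$ contains $\Gamma_{R'}$. Applying the trace theorem on $\Omega'$ and using the trivial monotonicity $\|u\|_{H^1(\Omega')}\le\|u\|_{H^1(\Omega)}$ gives
\[
\|u\|_{H^{1/2}(\Gamma_{R'})}\le\|u\|_{H^{1/2}(\partial\Omega')}\lesssim\|u\|_{H^1(\Omega')}\le\|u\|_{H^1(\Omega)},
\]
which is the second estimate. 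The argument is standard by density of $C^\infty(\overline{\Omega'})$ in $H^1(\Omega')$ and an explicit extension/cutoff construction near $\Gamma_{R'}$, and there is no genuine obstacle to overcome; consistent with the authors' own remark that the proof is omitted for brevity. If one wished for a self-contained derivation tailored to the paper's spherical geometry, a Fourier expansion in vector spherical harmonics on $\Gamma_{R'}$ combined with a radial energy identity on the shell $\Omega'$ would deliver the same bound with an explicit constant, but the abstract Lipschitz trace theorem is the cleanest route.
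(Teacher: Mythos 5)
Your proof is correct and is exactly the standard argument the authors have in mind when they write that the proof is ``standard and omitted for brevity'': the trace theorem on the Lipschitz domain $\Omega$ for $\Gamma_R$, and on the spherical shell $B_R\setminus\overline{B_{R'}}\subset\Omega$ (valid since $D\subset B_{R'}$) for $\Gamma_{R'}$. The only implicit ingredient worth naming is the equivalence of the paper's spherical-harmonic (spectral) definition of the $H^{1/2}(\Gamma_R)$ norm in Appendix C with the intrinsic Sobolev norm on the sphere, which is likewise classical.
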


\begin{lemma}\label{Firsttwo1}
Let $\boldsymbol{u}\in\boldsymbol{H}^1(\Omega)$ be the solution of the variational problem \eqref{section2_variational1}. For any $\boldsymbol{v}\in \boldsymbol{H}^1(\Omega)$, the following estimate holds: 
\[
\left|\int_{\Gamma_R} \left(\mathscr{T}-\mathscr{T}_N\right)\boldsymbol{u}\cdot\boldsymbol{v}{\rm d}s\right|\leq C N\left(\frac{R'}{R}\right)^{N}\|\boldsymbol u^{\rm inc}\|_{\boldsymbol{H}^1(\Omega)}\|\boldsymbol{v}\|_{\boldsymbol{H}^1(\Omega)} , 
\]
where $C$ is a positive constant independent of $N$.
\end{lemma}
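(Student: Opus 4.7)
The plan is to reduce the surface integral to a Fourier series on the sphere $\Gamma_R$, cut off at index $n>N$, and then gain the factor $(R'/R)^N$ by transferring those tail coefficients to the inner sphere $\Gamma_{R'}$ via analytic continuation of the radiating field.

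First I would expand $\boldsymbol u$ on $\Gamma_R$ in the vector spherical harmonic basis $\{\boldsymbol U_n^m,\boldsymbol V_n^m,X_n^m\boldsymbol e_\rho\}$, with coefficients $\boldsymbol u_n^m=(u_{1n}^m,u_{2n}^m,u_{3n}^m)^\top$, and do the same for $\boldsymbol v$ with coefficients $\boldsymbol v_n^m$. By the definition \eqref{section2_TBC}--\eqref{section3_truncDtN} of $\mathscr T$ and $\mathscr T_N$ and the $L^2$-orthogonality (up to harmless weights) of the basis, the integral collapses to
\[
\int_{\Gamma_R}\bigl(\mathscr T-\mathscr T_N\bigr)\boldsymbol u\cdot\overline{\boldsymbol v}\,{\rm d}s
=\sum_{n=N+1}^\infty\sum_{m=-n}^n c_{n,m}\,(M_n\boldsymbol u_n^m)\cdot\overline{\boldsymbol v_n^m},
\]
with explicit constants $c_{n,m}$ coming from the normalization of the harmonics.

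Next I would use the Helmholtz decomposition \eqref{section2_problem2} together with the Kupradze--Sommerfeld radiation condition: in the exterior of $\overline{B_{R'}}$ the potentials $\phi$ and $\boldsymbol\psi$ admit convergent series expansions in outgoing spherical wave functions (spherical Hankel functions $h_n^{(1)}(\kappa_p\rho)$ and $h_n^{(1)}(\kappa_s\rho)$ combined with vector spherical harmonics). Consequently each component $u_{in}^m(R)$ is related to $u_{in}^m(R')$ by a ratio of spherical Hankel functions (and their derivatives) at $\kappa R$ and $\kappa R'$. The classical asymptotics $h_n^{(1)}(z)\sim -{\rm i}(2n-1)!!/z^{n+1}$ for fixed $z$ and $n\to\infty$ then yield
\[
|u_{in}^m(R)|\lesssim (R'/R)^{n+1}\,|u_{in}^m(R')|\qquad\text{for } n\gg 1,
\]
uniformly in $m$ and $i\in\{1,2,3\}$. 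In parallel the same Hankel asymptotics (applied to the explicit formula for $M_n$ recalled in Appendix \ref{appendixDtN}) give $\|M_n\|\lesssim n$ as $n\to\infty$.

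Plugging these two facts into the series and invoking Cauchy--Schwarz separately in $(n,m)$ gives
\[
\Bigl|\int_{\Gamma_R}\!\!(\mathscr T-\mathscr T_N)\boldsymbol u\cdot\overline{\boldsymbol v}\,{\rm d}s\Bigr|
\lesssim \sup_{n>N}\bigl(n\,(R'/R)^{n+1}\bigr)\,\|\boldsymbol u\|_{\boldsymbol H^{1/2}(\Gamma_{R'})}\|\boldsymbol v\|_{\boldsymbol H^{1/2}(\Gamma_R)},
\]
where the Sobolev norms arise after estimating the Fourier coefficients on $\Gamma_{R'}$ and $\Gamma_R$ against the corresponding $H^{1/2}$ norms (using the standard weighted $\ell^2$ characterization on a sphere). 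The supremum is attained near $n=N$ and equals $O(N(R'/R)^N)$. Finally I would close the estimate by the trace bound of Lemma \ref{Poincare} applied on the two spheres $\Gamma_{R'}$ and $\Gamma_R$ (both are contained in $\overline\Omega$ up to boundary identification), and the stability bound \eqref{section2_stability} that controls $\|\boldsymbol u\|_{\boldsymbol H^1(\Omega)}$ by $\|\boldsymbol u^{\rm inc}\|_{\boldsymbol H^1(\Omega)}$.

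The main obstacle is the third step: verifying the two asymptotic estimates $\|M_n\|\lesssim n$ and $|u_{in}^m(R)|\lesssim(R'/R)^{n+1}|u_{in}^m(R')|$ rigorously and uniformly in the three coupled components. This requires careful bookkeeping of the explicit entries of $M_n$ (which involve $h_n^{(1)}(\kappa_p R),h_n^{(1)}(\kappa_s R)$ and their first derivatives), and a separate analytic continuation argument for the compressional and shear parts of the decomposition; the rest of the proof is a routine assembly of the series bound with Cauchy--Schwarz, the trace inequality, and the stability estimate.
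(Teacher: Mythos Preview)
Your overall strategy coincides with the paper's: expand on $\Gamma_R$ in vector spherical harmonics, use the Helmholtz decomposition and the outgoing Hankel expansions to transfer the tail coefficients from $\Gamma_R$ to $\Gamma_{R'}$, bound $\|M_n\|\lesssim n$, and finish with Cauchy--Schwarz, the trace inequality, and the stability estimate \eqref{section2_stability}.

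There is, however, a genuine gap in your transfer step. The statement ``each component $u_{in}^m(R)$ is related to $u_{in}^m(R')$ by a ratio of spherical Hankel functions'' is false for $i=1,3$. What is diagonal in the Hankel ratios is the vector of \emph{potential} coefficients $(\phi_n^m,\psi_{2n}^m,\psi_{3n}^m)$, not the displacement coefficients. The map from potentials to displacements is the matrix $K_n(\rho)$ of \eqref{DtN_Kn}, which depends on $\rho$ and has entries of size $O(n)$; hence $\boldsymbol u_n^m(R)=Q_n\,\boldsymbol u_n^m(R')$ with $Q_n=K_n(R)\,\mathrm{diag}(z_n^p,z_n^s,(R'/R)z_n^s)\,K_n^{-1}(R')$. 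A naive bound gives $\|Q_n\|\lesssim n^2(R'/R)^n$, which would only yield $N^2(R'/R)^N$ in the end. The paper obtains the sharper $\|Q_n\|\lesssim n(R'/R)^n$ by exploiting a cancellation: rewriting $Q_{n,11}$ (and the other off-block entries) so that the leading $n^2$ terms combine into $n(n+1)(z_n^s-z_n^p)$, and then invoking Lemma~\ref{Bessel5}, which shows that the \emph{difference} of Hankel ratios at the two wavenumbers gains an extra factor $1/n$. This cancellation is the heart of the argument and is not captured by ``classical asymptotics $h_n^{(1)}(z)\sim -{\rm i}(2n-1)!!/z^{n+1}$'' applied componentwise. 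Your claimed bound $|u_{in}^m(R)|\lesssim(R'/R)^{n+1}|u_{in}^m(R')|$ (with no $n$ and no cross terms) is therefore not justified; the correct replacement is $|\boldsymbol u_n^m(R)|\lesssim n(R'/R)^n|\boldsymbol u_n^m(R')|$, after which your Cauchy--Schwarz step goes through with the same final factor $N(R'/R)^N$.
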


\begin{proof}
Let $\phi$ and $\boldsymbol{\psi}$ be the potentials of the Helmholtz decomposition for the solution $\boldsymbol{u}$.  It can be verified from \eqref{AppDtN_phi}--\eqref{AppDtN_psi} that
\begin{eqnarray}\label{section4_RR1}
\phi_n^m(R)=z_n^p\phi_n^m(R'),\quad \psi_{2n}^m(R)=z_n^s\psi_{2n}^m(R'),\quad \psi_{3n}^m(R)=\left(\frac{R'}{R}\right)z_n^s \psi_{3n}^m(R'),
\end{eqnarray}
where
\[
 z_n^p=\frac{h_n^{(1)}(\kappa_p R)}{h_n^{(1)}(\kappa_p R')},\quad  z_n^s=\frac{h_n^{(1)}(\kappa_s R)}{h_n^{(1)}(\kappa_s R')}.
\]
Substituting \eqref{section4_RR1} into \eqref{DtN_Kn} and using \eqref{DtN_Kninverse} with $R$ being replaced with $R'$, we obtain
\begin{equation}\label{section4_uRR1}
 \boldsymbol u^m_n(R)=Q_n \boldsymbol u^m_n(R'),
\end{equation}
where the entries of the matrix $Q_n$ are 
\begin{align*}
Q_{n, 11}&=\frac{R'}{R\Lambda_n(R')}\left[-n(n+1)z_n^p +z_n^{(1)}(\kappa_p R')(1+z_n^{(1)}(\kappa_s R))z_n^s\right],\\
Q_{n, 13}&= \frac{R'}{R\Lambda_n(R')}\sqrt{n(n+1)}\left[ (1+z_n^{(1)}(\kappa_s R'))z_n^p-(1+z_n^{(1)}(\kappa_s R))z_n^s\right],\\
Q_{n, 31}&= \frac{R'}{R\Lambda_n(R')}\sqrt{n(n+1)}\left[- z_n^{(1)}(\kappa_p R) z_n^p +z_n^{(1)}(\kappa_p R')z_n^s \right],\\
Q_{n, 33}&=\frac{R'}{R\Lambda_n(R')}\left[(1+z_n^{(1)}(\kappa_s R'))z_n^{(1)}(\kappa_p R)z_n^p-n(n+1)z_n^s \right],
\end{align*}
and
\begin{eqnarray*}
Q_{n, 22}=z_n^s, \quad Q_{n, 12} =Q_{n, 21}=Q_{n, 23}=Q_{n, 32}=0.
\end{eqnarray*}

We use the element $Q_{n, 11}$ as an example to show the estimate of the matrix $Q_n$ since all the other elements can be similarly estimated. A simple calculation yields 
\begin{align*}
Q_{n, 11}&=\frac{R'}{R\Lambda_n(R')}n(n+1) (z_n^s -z_n^p) \\
&\quad+\frac{R'}{R\Lambda_n(R')}\left[z_n^{(1)}(\kappa_p R')(1+z_n^{(1)}(\kappa_s R))
-n(n+1)\right]z_n^s.
\end{align*}
It follows from Lemma \ref{Bessel5} and \eqref{Lambdan}--\eqref{Thm2_bessel} that
\begin{eqnarray*}
\left|Q_{n, 11}\right|\lesssim n\left(\frac{R'}{R}\right)^n.
\end{eqnarray*}
Similarly, we may show that all the other entries of $K_n$ satisfy 
\[
\left|Q_{n, ij}\right|\lesssim n\left(\frac{R'}{R}\right)^n,\quad i, j=1,2,3.
\]

Substituting the estimate of $Q_n$ into \eqref{section4_uRR1}, we have
\[
\left|\boldsymbol{u}^m_{n}(R)\right|\lesssim n\left(\frac{R'}{R}\right)^n \left|\boldsymbol{u}^m_{n}(R')\right|.
\]
Combining the above estimate with \eqref{Mnestimate} and using Theorem \ref{Poincare} and \eqref{section2_stability}, we have 
\begin{align*}
\left|\int_{\Gamma_R} \left(\mathscr{T}-\mathscr{T}_N\right)\boldsymbol{u}\cdot\boldsymbol{v}\,{\rm d}s\right| &= \left|
\sum\limits_{n>N}\sum\limits_{m=-n}^n M_n \boldsymbol{u}^m_n(R) \cdot\overline{\boldsymbol{v}}^m_n(R)\right|\\
&=\left|\sum\limits_{n>N}\sum\limits_{m=-n}^n M_n Q_n\boldsymbol{u}^m_n(R') \cdot\overline{\boldsymbol{v}}^m_n(R)\right|\\
&\lesssim \left|\sum\limits_{n>N}\sum\limits_{m=-n}^n n^2 \left(\frac{R'}{R}\right)^n\boldsymbol{u}^m_n(R') \cdot\overline{\boldsymbol{v}}^m_n(R)\right|\\
&\lesssim  \max\limits_{n>N}\left(n \left(\frac{R'}{R}\right)^n\right)\|\boldsymbol u^{\rm inc}\|_{\boldsymbol{H}^1(\Omega)}\|\boldsymbol{v}\|_{\boldsymbol{H}^1(\Omega)}.
\end{align*}
The proof is completed by noting that $n \left(R'/R\right)^n$ decreases for sufficiently large $n$.
\end{proof}

Based on Lemma \ref{Firsttwo1}, the estimate of the first two terms in \eqref{section4_key} is given in the following lemma. 
The proof follows directly from the discussion in \cite{LY-2Dobstacle}, where the basic idea is to use the integration
by parts and the interpolation theory. The details are omitted. 

\begin{lemma}\label{Firsttwo2}
Let $\boldsymbol{v}$ be any function in $\boldsymbol{H}^{1}_{\partial D}(\Omega)$, the following estimate holds: 
\begin{equation*}
\left|b(\boldsymbol{\xi}, \boldsymbol{v})+\int_{\Gamma_R}\left(\mathscr{T}-\mathscr{T}_N\right)\boldsymbol{\xi}\cdot 
\overline{\boldsymbol{v}}{\rm d}s\right|\lesssim \left( \left(\sum\limits_{T\in\mathcal M_h}\eta_{T}^2\right)^{1/2}
+N\left(\frac{R'}{R}\right)^{N} \|\boldsymbol u^{\rm inc}\|_{\boldsymbol{H}^{1}(\Omega)}\right)\|\boldsymbol{v}\|_{\boldsymbol{H}^{1}(\Omega)},
\end{equation*}
which gives by taking $\boldsymbol{v}=\boldsymbol{\xi}$ that 
\begin{align*}
\left|b(\boldsymbol{\xi}, \boldsymbol{\xi})+\int_{\Gamma_R}\left(\mathscr{T}-\mathscr{T}_N\right)\boldsymbol{\xi}\cdot\overline{\boldsymbol{\xi}}\,{\rm d}s\right|&\lesssim \Bigg(\left(\sum\limits_{T\in M_h} \eta_T^2\right)^{1/2}
+N\left(\frac{R'}{R}\right)^N \|\boldsymbol u^{\rm inc}\|_{\boldsymbol{H}^1(\Omega)} \notag\\
&\quad +\|\boldsymbol{g}-\boldsymbol{g}^h\|_{\boldsymbol{H}^{1/2}(\partial D)}\Bigg)
\|\boldsymbol{\xi}\|_{\boldsymbol{H}^1(\Omega)}. 
\end{align*}
\end{lemma}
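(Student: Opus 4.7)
The plan is to combine a Galerkin-orthogonality argument with elementwise integration by parts and the DtN truncation estimate of Lemma \ref{Firsttwo1}. For the first inequality I would exploit that $b(\boldsymbol u,\boldsymbol v)=0$ for every $\boldsymbol v\in\boldsymbol H^1_{\partial D}(\Omega)$ and $b_N(\boldsymbol u_N^h,\boldsymbol v^h)=0$ for every $\boldsymbol v^h\in\boldsymbol V_{h,\partial D}$. Since $b(\boldsymbol w,\boldsymbol v)-b_N(\boldsymbol w,\boldsymbol v)=-\int_{\Gamma_R}(\mathscr T-\mathscr T_N)\boldsymbol w\cdot\overline{\boldsymbol v}\,{\rm d}s$, subtracting the two variational identities and moving the boundary correction to the left-hand side yields
\[
b(\boldsymbol\xi,\boldsymbol v)+\int_{\Gamma_R}(\mathscr T-\mathscr T_N)\boldsymbol\xi\cdot\overline{\boldsymbol v}\,{\rm d}s = -b_N(\boldsymbol u_N^h,\boldsymbol v-\boldsymbol v^h)+\int_{\Gamma_R}(\mathscr T-\mathscr T_N)\boldsymbol u\cdot\overline{\boldsymbol v}\,{\rm d}s
\]
for every $\boldsymbol v\in\boldsymbol H^1_{\partial D}(\Omega)$ and $\boldsymbol v^h\in\boldsymbol V_{h,\partial D}$. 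The surface term on the right is already controlled by Lemma \ref{Firsttwo1}, so the attention focuses on the residual $b_N(\boldsymbol u_N^h,\boldsymbol v-\boldsymbol v^h)$.

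I would then choose $\boldsymbol v^h$ to be the Scott--Zhang quasi-interpolant of $\boldsymbol v$, which preserves the zero trace on $\partial D$. Performing elementwise integration by parts on $b_N(\boldsymbol u_N^h,\boldsymbol v-\boldsymbol v^h)$ exposes the volume residual $\mathscr R\boldsymbol u_N^h$ inside each tetrahedron, the interior jumps $J_F$ across each shared face, and on $\Gamma_R$ precisely the boundary piece $\mathscr T_N\boldsymbol u_N^h-\mu\partial_\rho\boldsymbol u_N^h-(\lambda+\mu)(\nabla\cdot\boldsymbol u_N^h)\boldsymbol e_\rho$ that defines the boundary version of $J_F$; the faces on $\partial D$ contribute nothing because $\boldsymbol v-\boldsymbol v^h$ vanishes there. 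Applying the standard Scott--Zhang estimates $\|\boldsymbol v-\boldsymbol v^h\|_{L^2(T)}\lesssim h_T\|\boldsymbol v\|_{\boldsymbol H^1(\widetilde T)}$ and $\|\boldsymbol v-\boldsymbol v^h\|_{L^2(F)}\lesssim h_F^{1/2}\|\boldsymbol v\|_{\boldsymbol H^1(\widetilde F)}$ on local patches $\widetilde T,\widetilde F$, followed by Cauchy--Schwarz and the finite overlap of patches, assembles these local contributions into $\left(\sum_{T\in\mathcal M_h}\eta_T^2\right)^{1/2}\|\boldsymbol v\|_{\boldsymbol H^1(\Omega)}$. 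Together with Lemma \ref{Firsttwo1}, this gives the first inequality.

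The second inequality is subtler because $\boldsymbol\xi=\boldsymbol u-\boldsymbol u_N^h$ carries the trace $\boldsymbol g-\boldsymbol g^h$ on $\partial D$ and hence is not an admissible test function. I would resolve this by splitting $\boldsymbol\xi=\boldsymbol\xi_0+\boldsymbol\zeta$, where $\boldsymbol\zeta\in\boldsymbol H^1(\Omega)$ is a bounded lifting of $\boldsymbol g-\boldsymbol g^h$ satisfying $\|\boldsymbol\zeta\|_{\boldsymbol H^1(\Omega)}\lesssim\|\boldsymbol g-\boldsymbol g^h\|_{\boldsymbol H^{1/2}(\partial D)}$ and $\boldsymbol\xi_0=\boldsymbol\xi-\boldsymbol\zeta\in\boldsymbol H^1_{\partial D}(\Omega)$. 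The first inequality applied with $\boldsymbol v=\boldsymbol\xi_0$ handles the $\boldsymbol\xi_0$ contribution, while $b(\boldsymbol\xi,\boldsymbol\zeta)$ and $\int_{\Gamma_R}(\mathscr T-\mathscr T_N)\boldsymbol\xi\cdot\overline{\boldsymbol\zeta}\,{\rm d}s$ are bounded via continuity of the sesquilinear form and boundedness of $\mathscr T,\mathscr T_N$ between $\boldsymbol H^{1/2}(\Gamma_R)$ and $\boldsymbol H^{-1/2}(\Gamma_R)$, each producing a factor of $\|\boldsymbol g-\boldsymbol g^h\|_{\boldsymbol H^{1/2}(\partial D)}$; combining with $\|\boldsymbol\xi_0\|_{\boldsymbol H^1(\Omega)}\leq\|\boldsymbol\xi\|_{\boldsymbol H^1(\Omega)}+\|\boldsymbol\zeta\|_{\boldsymbol H^1(\Omega)}$ and absorbing cross terms produces the stated bound. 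I expect the main obstacle to lie exactly in this last step: coordinating the boundary lifting with the Scott--Zhang interpolation, which requires zero trace on $\partial D$, and keeping the correct scaling on faces adjacent to $\partial D$; the remaining manipulations are routine adaptations of the two-dimensional argument developed in \cite{LY-2Dobstacle}.
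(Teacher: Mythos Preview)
Your proposal is correct and matches the paper's approach: the paper explicitly omits the proof, stating only that it follows the two-dimensional argument in \cite{LY-2Dobstacle} via integration by parts and interpolation theory, which is precisely the Galerkin-orthogonality plus Scott--Zhang residual estimation you describe. Your handling of the second inequality through a trace lifting of $\boldsymbol g-\boldsymbol g^h$ is the standard way to accommodate the inhomogeneous Dirichlet datum, and the cross term you worry about is harmless since $\|\boldsymbol g-\boldsymbol g^h\|_{\boldsymbol H^{1/2}(\partial D)}=\|\boldsymbol\xi|_{\partial D}\|_{\boldsymbol H^{1/2}(\partial D)}\lesssim\|\boldsymbol\xi\|_{\boldsymbol H^1(\Omega)}$ by the trace theorem.
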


It is proved in \cite{LY-ipi} that the matrix $\hat{M}_n=-\frac{1}{2}\left(M_n+M_n^*\right)$ is positive definite for sufficiently large $n$, where the star denotes the complex transpose. The following result can be obtained easily by following the proof of \cite[Lemma 4.6]{LY-2Dobstacle}. 

\begin{lemma}\label{Lastterm}
For any $\delta>0$, there exists a positive constant $C(\delta)$ independent of $N$ such that
\[
\int_{\Gamma_R} \mathscr{T}_N \boldsymbol{\xi}\cdot\overline{\boldsymbol{\xi}}{\rm d}s\leq C(\delta) \|\boldsymbol{\xi}\|^2_{\boldsymbol{L}^2(B_{R}\setminus B_{R'})}+\left(\frac{R}{R'}\right)\delta \|\boldsymbol{\xi}\|^2_{\boldsymbol{H}^1(B_R\setminus B_{R'})}.
\]
\end{lemma}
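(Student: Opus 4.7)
The plan is to exploit the sign structure of the DtN matrices. Expanding $\boldsymbol{\xi}|_{\Gamma_R}$ in the vector spherical harmonic basis $\{\boldsymbol U_n^m, \boldsymbol V_n^m, X_n^m\boldsymbol e_\rho\}$ and using their orthonormality on $S^2$, the boundary pairing becomes a quadratic form in the coefficient vectors $\boldsymbol{\xi}_n^m(R)$; taking real parts converts it to
\[
\Re\int_{\Gamma_R}\mathscr{T}_N\boldsymbol{\xi}\cdot\overline{\boldsymbol{\xi}}\,{\rm d}s
= -R^2\sum_{n=0}^{N}\sum_{m=-n}^{n}\hat{M}_n\boldsymbol{\xi}_n^m(R)\cdot\overline{\boldsymbol{\xi}_n^m(R)}.
\]
By the positive-definiteness of $\hat{M}_n$ for $n\geq n_0$ (cited above from \cite{LY-ipi}), every term with $n\geq n_0$ is nonpositive and can be discarded in the upper bound, leaving the finite sum
\[
\Re\int_{\Gamma_R}\mathscr{T}_N\boldsymbol{\xi}\cdot\overline{\boldsymbol{\xi}}\,{\rm d}s
\;\leq\; C\,R^2\sum_{n=0}^{n_0-1}\sum_{m=-n}^{n}|\boldsymbol{\xi}_n^m(R)|^2,
\]
where $C$ depends on $\omega,\lambda,\mu,R$ and on $\max_{n<n_0}\|\hat{M}_n\|$, but not on $N$.

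The next step is a radial trace/interpolation estimate for each fixed low mode. By the fundamental theorem of calculus, for any $s\in[R',R]$,
\[
|\boldsymbol{\xi}_n^m(R)|^2 = |\boldsymbol{\xi}_n^m(s)|^2 + 2\Re\int_s^R \overline{\boldsymbol{\xi}_n^m(\rho)}\,\partial_\rho\boldsymbol{\xi}_n^m(\rho)\,{\rm d}\rho.
\]
Young's inequality $2|ab|\leq \delta^{-1}|a|^2 + \delta|b|^2$ bounds the integrand, and averaging both sides in $s$ over $[R',R]$ eliminates the pointwise value at $s$ in favor of $(R-R')^{-1}\int_{R'}^{R}|\boldsymbol{\xi}_n^m(\rho)|^2{\rm d}\rho$. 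This yields, for $n<n_0$,
\[
|\boldsymbol{\xi}_n^m(R)|^2 \;\lesssim\; \bigl((R-R')^{-1}+\delta^{-1}\bigr)\!\int_{R'}^{R}|\boldsymbol{\xi}_n^m(\rho)|^2{\rm d}\rho + \delta\int_{R'}^{R}|\partial_\rho\boldsymbol{\xi}_n^m(\rho)|^2{\rm d}\rho.
\]

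Finally, sum over $0\leq n<n_0$ and $|m|\leq n$. Using orthonormality of the vector spherical harmonics and Fubini's theorem with the volume element $\rho^2{\rm d}\rho\,{\rm d}S_{S^2}$,
\[
\sum_{n,m}\int_{R'}^{R}|\boldsymbol{\xi}_n^m(\rho)|^2\rho^2\,{\rm d}\rho = \|\boldsymbol{\xi}\|^2_{\boldsymbol{L}^2(B_R\setminus B_{R'})},
\]
and similarly for $\partial_\rho\boldsymbol{\xi}$, which is dominated by $\|\boldsymbol{\xi}\|^2_{\boldsymbol{H}^1(B_R\setminus B_{R'})}$. Since the elementary bound above has no $\rho^2$ weight, converting to weighted integrals costs a factor $(R')^{-2}$ on the $L^2$ side and $R^2$ on the $H^1$ side; reorganizing and absorbing everything into $C(\delta)$ and into the small-coefficient term gives the stated $R/R'$ factor. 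The main obstacle is this radial-weight bookkeeping and verifying that the resulting $C(\delta)$ genuinely depends only on $\omega,\lambda,\mu,R,R',n_0,\delta$ and not on $N$; the sign-based truncation at $n_0$ and the Young trick are routine and parallel the two-dimensional argument in \cite{LY-2Dobstacle}.
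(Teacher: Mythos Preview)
Your proposal is correct and follows essentially the same route the paper indicates by citing \cite[Lemma~4.6]{LY-2Dobstacle}: split the modes at a fixed $n_0$ using the positive definiteness of $\hat{M}_n$ for $n\ge n_0$, then control the finitely many low modes by a one-dimensional trace inequality in the radial variable obtained from the fundamental theorem of calculus plus Young's inequality. A couple of bookkeeping remarks: since $\{X_n^m\boldsymbol e_\rho,\boldsymbol U_n^m,\boldsymbol V_n^m\}$ are orthonormal on $\Gamma_R$ (not on $S^2$), the extra $R^2$ in your quadratic-form identity and the Parseval identity for $\|\boldsymbol{\xi}\|_{\boldsymbol L^2(B_R\setminus B_{R'})}^2$ should be checked; all such factors are fixed constants independent of $N$ and are harmlessly absorbed into $C(\delta)$ or into the $R/R'$ prefactor, so the argument stands.
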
  

To estimate the third term of \eqref{section4_key}, we introduce the dual problem
\begin{equation}\label{section4_dual1}
b(\boldsymbol{v}, \boldsymbol{p})=\int_{\Omega} \boldsymbol{v}\cdot\overline{\boldsymbol{\xi}}{\rm d}\boldsymbol x\quad
\forall\, \boldsymbol{v}\in \boldsymbol{H}_{\partial D}^1(\Omega).
\end{equation}
It is easy to verify that $\boldsymbol{p}$ satisfies the boundary value problem
\begin{equation}\label{section4_dual1problem}
\begin{cases}
\mu\Delta \boldsymbol{p}+(\lambda+\mu)\nabla\nabla\cdot\boldsymbol{p}+\omega^2\boldsymbol{p}=-\boldsymbol{\xi} \quad & {\rm in} ~ \Omega,\\
\boldsymbol{p}=0 \quad & {\rm on} ~ \partial D,\\
\mathscr{D}\boldsymbol{p}=\mathscr{T}^*\boldsymbol{p}\quad &{\rm on} ~ \Gamma_R,
\end{cases}
\end{equation}
where $\mathscr{T}^*$ is the adjoint operator to the DtN operator $\mathscr{T}$. 

Letting $\boldsymbol{v}=\boldsymbol{\xi}$ in \eqref{section4_dual1} leads to 
\begin{equation}\label{section4_variational}
\|\boldsymbol{\xi}\|^2_{\boldsymbol{L}^2(\Omega)}=b(\boldsymbol{\xi}, \boldsymbol{p})+\int_{\Gamma_R} \left(\mathscr{T}-\mathscr{T}_N\right)\boldsymbol{\xi}\cdot\overline{\boldsymbol{p}}{\rm d}s-\int_{\Gamma_R} \left(\mathscr{T}-\mathscr{T}_N\right)\boldsymbol{\xi}\cdot\overline{\boldsymbol{p}}{\rm d}s.
\end{equation}
The first two terms of \eqref{section4_variational} can be estimated in the same way as that of Lemma \ref{Firsttwo2}. Thus it suffices to estimate the third term of  \eqref{section4_variational}. Next we deduce the solution of the dual problem \eqref{section4_dual1} which is crucial for the estimate.

Consider the system 
\begin{equation}\label{section4_Helmholtzxi}
\begin{cases}
\nabla\zeta+\nabla\times\boldsymbol{Z} = \boldsymbol{\xi}, \quad \nabla\cdot\boldsymbol{Z}=0 \quad & \text{in} ~ B_{R}\setminus \overline{B_{R'}},\\
\zeta(R)=0,\quad  \boldsymbol{Z}(R)=0\quad & \text{on} ~ \Gamma_R.\\
\end{cases}
\end{equation}
By a straightforward calculation, the Fourier coefficients for the solution of \eqref{section4_Helmholtzxi} are given by 
\begin{align}
\zeta_n^m(\rho) &= \frac{1}{2n+1}\int_{\rho}^R\left[-n c_2-(n+1) c_4\right]\xi_{3n}^m(\tau)-\sqrt{n(n+1)}(c_2-c_4)\xi_{1n}^m(\tau){\rm d}\tau, \label{section4_zeta}\\
Z_{1n}^{m}(\rho) &= \frac{1}{2n+1}\int_{\rho}^R \left[-n c_1-(n+1) c_3\right]\xi_{2n}^m(\tau){\rm d}\tau,  \label{section4_Z1nm}\\
Z_{2n}^{m}(\rho) &= \frac{1}{2n+1}\int_{\rho}^R\sqrt{n(n+1)}(c_2-c_4)\xi_{3n}^m(\tau)+\left[ (n+1)c_2+n c_4\right]\xi_{1n}^m(\tau){\rm d}\tau, \label{section4_Z2nm}\\
Z_{3n}^{m}(\rho) &= \frac{1}{2n+1}\int_{\rho}^R\sqrt{n(n+1)}(c_1-c_3) \xi_{2n}^m(\tau){\rm d}\tau, \label{section4_Z3nm}
\end{align}
where
\[
c_1=\left(\frac{\rho}{\tau}\right)^{-n-2}, \quad c_2=\left(\frac{\rho}{\tau}\right)^{-n-1},\quad c_3=
\left(\frac{\rho}{\tau}\right)^{n-1}, \quad c_4=\left(\frac{\rho}{\tau}\right)^{n}.
\]

Consider the following boundary value problem: 
\begin{equation}\label{section4_HelmholtzD}
\begin{cases}
\Delta g+\kappa_p^2 g=-\frac{1}{\lambda+2\mu}\zeta, \quad -\nabla\times(\nabla\times\boldsymbol{q})+\kappa_s^2\boldsymbol{q}=-\frac{1}{\mu}\boldsymbol{Z} \quad &{\rm in} ~ B_{R}\setminus B_{R'}, \\
\nabla\cdot \boldsymbol{q}=\nabla\cdot\boldsymbol{Z}=0 & {\rm in} ~ B_{R}\setminus B_{R'},\\
\partial_{\rho}g=\mathscr{T}_1^*g, \quad \left(\nabla\times\boldsymbol{q}\right)\times\boldsymbol e_{\rho}=-{\rm i}\kappa_s\mathscr{T}_2^* \boldsymbol{q}_{\Gamma_R} & {\rm on} ~ \Gamma_R,\\
g=g, \quad \boldsymbol{q}=\boldsymbol{q} &{\rm on} ~ \Gamma_{R'}, 
\end{cases}
\end{equation}
where $\mathscr{T}^*_1$ and $\mathscr{T}^*_2$ are the adjoint operators to the DtN operators $\mathscr{T}_1$  (cf. \eqref{DtN_T1}) and $\mathscr{T}_2$ (cf. \eqref{DtN_T2}), respectively, and $\boldsymbol q_{\Gamma_R}=-\boldsymbol e_\rho\times(\boldsymbol e_\rho\times \boldsymbol q)$ is the tangential component of $\boldsymbol  q$ on $\Gamma_R$.  

\begin{lemma}\label{step1}
If $(\zeta, \boldsymbol{Z})$ and $(g, \boldsymbol  q)$ are the solutions of the systems  \eqref{section4_Helmholtzxi} and
\eqref{section4_HelmholtzD}, respectively, then $\boldsymbol{p}=\nabla g+\nabla\times\boldsymbol{q}$ is the solution of the dual problem \eqref{section4_dual1problem} in $B_{R}\setminus\overline{B_{R'}}$.
\end{lemma}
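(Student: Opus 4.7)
My plan is to verify directly that $\boldsymbol{p} = \nabla g + \nabla \times \boldsymbol{q}$ satisfies both the Navier equation and the transparent boundary condition of \eqref{section4_dual1problem} on the annulus $B_R \setminus \overline{B_{R'}}$, exploiting the constraint $\nabla \cdot \boldsymbol{q} = 0$ to decouple the elastic operator into a scalar Helmholtz piece for $g$ and a vector Maxwell piece for $\boldsymbol{q}$. The structure mirrors the way the forward DtN operator $\mathscr{T}$ is built out of $\mathscr{T}_1$ and $\mathscr{T}_2$ in Appendix \ref{appendixDtN}; here we use the adjoint version.

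For the PDE, I would first observe that $\nabla \cdot \boldsymbol{p} = \Delta g$ since $\nabla\cdot(\nabla\times\boldsymbol{q})=0$, and use the vector identity $\Delta \boldsymbol{q} = \nabla(\nabla\cdot\boldsymbol{q}) - \nabla\times(\nabla\times\boldsymbol{q})$ with $\nabla\cdot\boldsymbol{q}=0$ to rewrite
\[
\mu\Delta\boldsymbol{p} + (\lambda+\mu)\nabla\nabla\cdot\boldsymbol{p} + \omega^2\boldsymbol{p} = (\lambda+2\mu)\nabla\bigl[\Delta g + \kappa_p^2 g\bigr] + \mu\nabla\times\bigl[-\nabla\times(\nabla\times\boldsymbol{q}) + \kappa_s^2 \boldsymbol{q}\bigr],
\]
using $\kappa_p^2 = \omega^2/(\lambda+2\mu)$ and $\kappa_s^2 = \omega^2/\mu$. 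Substituting the two inhomogeneous equations from \eqref{section4_HelmholtzD} collapses the right-hand side to $-\nabla\zeta - \nabla\times\boldsymbol{Z}$, which by the first line of \eqref{section4_Helmholtzxi} equals $-\boldsymbol{\xi}$, as required.

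For the boundary condition $\mathscr{D}\boldsymbol{p} = \mathscr{T}^*\boldsymbol{p}$ on $\Gamma_R$, I would expand $g$ in scalar spherical harmonics and $\boldsymbol{q}$ in vector spherical harmonics, then read off the expansion of $\boldsymbol{p}$ in the basis $\{\boldsymbol{U}_n^m,\boldsymbol{V}_n^m, X_n^m\boldsymbol{e}_\rho\}$. The two surface conditions $\partial_\rho g = \mathscr{T}_1^* g$ and $(\nabla\times\boldsymbol{q})\times\boldsymbol{e}_\rho = -{\rm i}\kappa_s\mathscr{T}_2^*\boldsymbol{q}_{\Gamma_R}$ from \eqref{section4_HelmholtzD} fix, mode by mode, the radial derivative of $g$ and the tangential trace of $\nabla\times\boldsymbol{q}$. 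Assembling $\mathscr{D}\boldsymbol{p} = \mu\partial_\rho\boldsymbol{p} + (\lambda+\mu)(\nabla\cdot\boldsymbol{p})\boldsymbol{e}_\rho$ coefficient by coefficient and comparing with the adjoint of the $3\times 3$ matrix $M_n$ built in Appendix \ref{appendixDtN} yields precisely the identity $\mathscr{D}\boldsymbol{p}=\mathscr{T}^*\boldsymbol{p}$.

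The main obstacle will be this boundary step: it requires verifying that the adjoint operator $\mathscr{T}^*$ factors through $\mathscr{T}_1^*$ and $\mathscr{T}_2^*$ in exactly the same way $\mathscr{T}$ factors through $\mathscr{T}_1$ and $\mathscr{T}_2$. This is essentially a transpose-and-conjugate bookkeeping exercise at each mode $(n,m)$, but one must keep track of the normalizations of $\boldsymbol{U}_n^m,\boldsymbol{V}_n^m, X_n^m$ and of the radial scaling factors that appear in the entries of $M_n$. Once that factorization is recorded, both the PDE and the boundary identity follow by direct substitution, and the lemma is complete.
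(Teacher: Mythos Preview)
Your proposal is correct and follows essentially the same route as the paper: first the PDE via direct substitution and the Helmholtz decomposition, then the boundary identity $\mathscr{D}\boldsymbol{p}=\mathscr{T}^*\boldsymbol{p}$ via a mode-by-mode spherical-harmonic computation that feeds the adjoint surface conditions on $g$ and $\boldsymbol{q}$ back into the expansion of $\boldsymbol{p}$. One detail you will need in the execution, which the paper invokes explicitly, is that the boundary conditions $\zeta(R)=0$ and $\boldsymbol{Z}(R)=0$ from \eqref{section4_Helmholtzxi} are what kill the inhomogeneous source terms (the $\zeta$ and $Z_{jn}^m$ contributions) that arise in $\mathscr{D}\boldsymbol{p}$ when you use the PDEs from \eqref{section4_HelmholtzD} to eliminate second radial derivatives on $\Gamma_R$.
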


\begin{proof}
Letting $\boldsymbol{p}=\nabla g+\nabla\times \boldsymbol{q}$ and substituting it into the elastic wave equation, we get 
\begin{align*}
& \mu\Delta\left(\nabla g+\nabla\times \boldsymbol{q}\right)+(\lambda+\mu)\nabla\nabla\cdot\left(\nabla g+\nabla\times \boldsymbol{q}\right)+\omega^2\left(\nabla g+\nabla\times \boldsymbol{q}\right)\\
&=\nabla\left((\lambda+2\mu)\Delta g+\omega^2 g\right)+\nabla\times\left(-\mu\nabla\times\nabla\times \boldsymbol{q}+\mu\nabla\nabla\cdot\boldsymbol{q}+\omega^2\boldsymbol{q}\right)\\
&=-\nabla\zeta-\nabla\times\boldsymbol{Z}=-\boldsymbol{\xi},
\end{align*}
which shows that $\boldsymbol{p}$ satisfies the elastic wave equation in \eqref{section4_dual1problem}. The rest of the proof is to show that $\boldsymbol{p}$ satisfies the boundary condition $\mathscr{D}\boldsymbol{p}=\mathscr{T}^*\boldsymbol{p}$ on $\Gamma_R$.

It follows from \eqref{gradspherical}--\eqref{curlspherical} that 
\begin{align}\label{section4_p}
\boldsymbol{p} &=\nabla g+\nabla\times\boldsymbol{q}\notag\\
&=  \sum\limits_{n\in\mathbb{N}}\sum\limits_{|m|\leq n}\left[\frac{\sqrt{n(n+1)}}{\rho}g_n^m(\rho)-\frac{1}{\rho} q_{2n}^m(\rho)-q_{2n}^{m'}(\rho)\right]\boldsymbol U_n^m \\
&\quad +\left[\frac{1}{\rho} q_{1n}^m(\rho)+q_{1n}^{m'}(\rho)-\frac{\sqrt{n(n+1)}}{\rho}q_{3n}^m(\rho) \right] 
\boldsymbol V_n^m +\left[g_{n}^{m'}(\rho)-\frac{\sqrt{n(n+1)}}{\rho}q_{2n}^m(\rho)\right]X_n^m\boldsymbol e_{\rho}. \notag
\end{align}
Let $v_n^m(\rho)=\rho q_{3n}^m(\rho)$. Since $\nabla\cdot\boldsymbol{q}=0$, we have from \eqref{divspherical} that
\begin{align*}
q_{1n}^m(\rho)&=\frac{1}{\sqrt{n(n+1)}}\frac{1}{\rho}\left(v_n^m(\rho)+\rho v_n^{m'}(\rho)\right),\\
q_{1n}^{m'}(\rho)&=\frac{1}{\sqrt{n(n+1)}}\left(-\frac{1}{\rho^2}v_n^m(\rho)+\frac{1}{\rho}v_n^{m'}(\rho)+ v_n^{m''}(\rho)\right). 
\end{align*}
It follows from Lemma \ref{step23} that
\begin{align}\label{q1q3}
\frac{1}{\rho} q_{1n}^m(\rho)+q_{1n}^{m'}(\rho)-\frac{\sqrt{n(n+1)}}{\rho}q_{3n}^m(\rho) 
&=\frac{1}{\sqrt{n(n+1)}}\left(v_n^{m''}(\rho)+\frac{2}{\rho}v_n^{m'}(\rho)-\frac{n(n+1)}{\rho^2} v_n^m(\rho)\right) \notag\\
&= -\frac{1}{\sqrt{n(n+1)}} \left(\beta_n^m(\rho)+\kappa_s^2 v_n^m(\rho)\right). 
\end{align}
Substituting \eqref{q1q3} into \eqref{section4_p} and taking derivative of $\boldsymbol{p}$, we obtain 
\begin{align}\label{section4_dpR}
\boldsymbol{p}'(\rho)& =  \sum\limits_{n\in\mathbb{N}}\sum\limits_{|m|\leq n}\left[\sqrt{n(n+1)}\frac{\partial}{\partial \rho}\left(\frac{1}{\rho}g_n^m(\rho)\right)-\frac{\partial}{\partial \rho}\left(\frac{1}{\rho} q_{2n}^m(\rho)\right)-q_{2n}^{m''}(\rho)\right]\boldsymbol U_n^m \\
&\quad -\frac{1}{\sqrt{n(n+1)}} \left[\beta_n^{m'}(\rho)+\kappa_s^2 v_n^{m'}(\rho)\right]\boldsymbol V_n^m+\left[g_{n}^{m''}(\rho)-\sqrt{n(n+1)}\frac{\partial}{\partial \rho}\left(\frac{1}{\rho}q_{2n}^m(\rho)\right)\right]X_n^m \boldsymbol e_{\rho}. \notag
\end{align}
A simple calculation yields 
\begin{equation}\label{section4_divp}
\nabla\cdot\boldsymbol{p}=\Delta g=-\frac{1}{\lambda+2\mu}\zeta-\kappa_p^2 g.
\end{equation}
Substituting \eqref{section4_dpR}--\eqref{section4_divp} into the boundary operator $\mathscr{D}$, we have 
\begin{align}\label{I123}
\mathscr{D}\boldsymbol{p} &=\mu\partial_{\rho}\boldsymbol{p}+(\lambda+\mu)(\nabla\cdot\boldsymbol{p})\boldsymbol e_{\rho}\notag\\
&=\sum\limits_{n\in\mathbb{N}}\sum\limits_{|m|\leq n}\left\{\left[\mu g_{n}^{m''}(\rho)-(\lambda+\mu)\kappa_p^2 g_n^m(\rho)\right]X_n^m\boldsymbol e_{\rho}+\mu\sqrt{n(n+1)}\frac{\partial}{\partial \rho}\left(\frac{1}{\rho}g_n^m(\rho)\right)\boldsymbol U_n^m \right\} \notag\\
&\quad +\mu\sum\limits_{n\in\mathbb{N}}\sum\limits_{|m|\leq n}\Bigg\{-\left[\frac{\partial}{\partial \rho}\left(\frac{1}{\rho} q_{2n}^m(\rho)\right)+q_{2n}^{m''}(\rho)\right]\boldsymbol U_n^m -\sqrt{n(n+1)}\frac{\partial}{\partial \rho}\left(\frac{1}{\rho}q_{2n}^m(\rho)\right)X_n^m \boldsymbol e_{\rho} \notag\\
&\quad -\frac{1}{\sqrt{n(n+1)}} \left[\beta_n^{m'}(\rho)+\kappa_s^2 v_n^{m'}(\rho)\right] \boldsymbol V_n^m \Bigg\}-\left(\frac{\lambda+\mu}{\lambda+2\mu}\right)\zeta\boldsymbol e_{\rho} \notag\\
&=:I_1+I_2+I_3,
\end{align}
where $I_1, I_2$ and $I_3$ accounts for the summation of $g_n^m$, $q_{2n}^m$ and $q_{3n}^m$, respectively.

It follows from \eqref{lapspherical} that
\begin{align*}
\Delta g+\kappa_p^2\, g=-\frac{1}{\lambda+2\mu}\zeta, 
\end{align*}
which gives 
\begin{align*}
\sum\limits_{n\in\mathbb{N}}\sum\limits_{|m|\leq n}\left[ g_n^{m''}(\rho)+\frac{2}{\rho}g_n^{m'}(\rho)-\frac{n(n+1)}{\rho^2} g_n^m(\rho)+\kappa_p^2 \,g_n^m\right]X_n^m+\frac{1}{\lambda+2\mu}\zeta=0. 
\end{align*}
Substituting the above equation into $I_1$ and replacing the second order derivative, we obtain 
\begin{align*}
I_1 &=  \sum\limits_{n\in\mathbb{N}}\sum\limits_{|m|\leq n}\left\{\left[\mu g_{n}^{m''}(\rho)-(\lambda+\mu)\kappa_p^2 g_n^m(\rho)\right]X_n^m \boldsymbol e_{\rho}+\mu\sqrt{n(n+1)}\frac{\partial}{\partial \rho}\left(\frac{1}{\rho}g_n^m(\rho)\right)\boldsymbol U_n^m \right\}\\
&= \sum\limits_{n\in\mathbb{N}}\sum\limits_{|m|\leq n}\Bigg\{\left[-\mu\frac{2}{\rho}g_n^{m'}(\rho)+\mu\frac{n(n+1)}{\rho^2} g_n^m(\rho)-\omega^2 \,g_n^m(\rho)\right]X_n^m\boldsymbol e_{\rho}\\
&\quad +\mu\sqrt{n(n+1)}\left[\frac{1}{\rho}g_n^{m'}(\rho)-\frac{1}{\rho^2} g_n^m(\rho)\right]\boldsymbol U_n^m\Bigg\}
\end{align*}
Replacing the first order derivative by the transparent boundary condition
\[
g_n^{m'}(R)=\frac{z_n^{(2)}(\kappa_p R)}{R} g_n^m(R), 
\]  
we have from a straightforward computation that
\begin{align*}
I_1(R) &= \frac{1}{R^2} \sum\limits_{n\in\mathbb{N}}\sum\limits_{|m|\leq n}\Bigg\{\left[-2\mu z_n^{(2)}(\kappa_p R)+n(n+1)\mu-\omega^2 R^2 \right] g_n^m(R) X_n^m\boldsymbol e_{\rho}\\
&\quad +\mu\sqrt{n(n+1)}\left[z_n^{(2)}(\kappa_p R)-1\right] g_n^m(R) \boldsymbol U_n^m\Bigg\}. 
\end{align*}

Using \eqref{section4_HelmholtzD} and \eqref{curlcurlspherical}, we get 
\[
-\frac{1}{\rho}\frac{\partial^2}{\partial \rho^2}\left(\rho q_{2n}^m(\rho)\right)+\frac{n(n+1)}{\rho^2} q_{2n}^m(\rho)-\kappa_s^2 q_{2n}^m(\rho)=\frac{1}{\mu}Z_{2n}^{m}(\rho), 
\]
which implies that 
\[
q_{2n}^{m''}(\rho)=-\frac{2}{\rho}q_{2n}^{m'}(\rho)+\frac{n(n+1)}{\rho^2} q_{2n}^m(\rho)-\kappa_s^2 q_{2n}^m(\rho)-\frac{1}{\mu}Z_{2n}^m(\rho). 
\]
Substituting the above equation into $I_2$ and replacing the second derivative lead to 
\begin{align*}
I_2 &= \mu\sum\limits_{n\in\mathbb{N}}\sum\limits_{|m|\leq n}\Bigg\{-\left[q_{2n}^{m''}(\rho)+\left(-\frac{1}{\rho^2} q_{2n}^m(\rho)+\frac{1}{\rho} q_{2n}^{m'}(\rho)\right)\right]\boldsymbol U_n^m \\
&\quad -\sqrt{n(n+1)}\left(-\frac{1}{\rho^2}q_{2n}^m(\rho)+\frac{1}{\rho}q_{2n}^{m'}(\rho)\right)X_n^m \boldsymbol e_{\rho} \Bigg\}\\
&= \mu\sum\limits_{n\in\mathbb{N}}\sum\limits_{|m|\leq n}\Bigg\{\left[\frac{2}{\rho}q_{2n}^{m'}(\rho)-\frac{n(n+1)}{\rho^2} q_{2n}^m(\rho)+\kappa_s^2 q_{2n}^m(\rho)+\frac{1}{\mu}Z_{2n}^m(\rho)+\frac{1}{\rho^2} q_{2n}^m(\rho)-\frac{1}{\rho} q_{2n}^{m'}(\rho)\right]\boldsymbol U_n^m \\
&\quad -\sqrt{n(n+1)}\left(-\frac{1}{\rho^2}q_{2n}^m(\rho)+\frac{1}{\rho}q_{2n}^{m'}(\rho)\right)X_n^m\boldsymbol e_{\rho} \Bigg\}\\
&=\mu\sum\limits_{n\in\mathbb{N}}\sum\limits_{|m|\leq n}\Bigg\{\left[\frac{1}{\rho}q_{2n}^{m'}(\rho)+\left( \kappa_s^2+\frac{1}{\rho^2}-\frac{n(n+1)}{\rho^2}\right) q_{2n}^m(\rho)\right]\boldsymbol U_n^m \\
&\quad -\sqrt{n(n+1)}\left[\frac{1}{\rho}q_{2n}^{m'}(\rho)-\frac{1}{\rho^2}q_{2n}^m(\rho)\right]X_n^m \boldsymbol e_{\rho}\Bigg\}.
\end{align*}
Note that the terms $Z_{1n}^m$ and $\zeta$ have been dropped since they vanish on $\Gamma_R$. Substituting the transparent boundary condition back to $I_2$ and eliminating the first order terms, we deduce
\begin{align*}
I_2(R) & =\frac{\mu}{R^2}\sum\limits_{n\in\mathbb{N}}\sum\limits_{|m|\leq n}\bigg\{\left[z_n^{(2)}(\kappa_s R) +\kappa_s^2 R^2
+1-n(n+1)\right] q_{2n}^m(R)\boldsymbol U_n^m \\
&\quad -\sqrt{n(n+1)}\left[z_n^{(2)}(\kappa_s R)-1\right]q_{2n}^m(R)X_n^m \boldsymbol e_{\rho} \bigg\}. 
\end{align*}

By \eqref{section4_Z3nm}, we have $\beta_n^{m'}(R)=0$. It follows from Lemma \ref{step23} that
\begin{align*}
I_3(R)&=-\mu\sum\limits_{n\in\mathbb{N}}\sum\limits_{|m|\leq n} \frac{1}{\sqrt{n(n+1)}} \left[\beta_n^{m'}(R)+\kappa_s^2 v_n^{m'}(R)\right]\boldsymbol V_n^m \\
&=-\sum\limits_{n\in\mathbb{N}}\sum\limits_{|m|\leq n} \frac{\mu\kappa_s^2 }{\sqrt{n(n+1)}} z_n^{(2)}(\kappa_s R) q_{3n}^{m}(R) \boldsymbol V_n^m. 
\end{align*}

Substituting $I_1, I_2$ and $I_3$ into \eqref{I123}, we get 
\begin{align}\label{Dp}
\mathscr{D}\boldsymbol{p} &= I_1 (R)+I_2(R)+I_3(R)\\
&=  \frac{1}{R^2} \sum\limits_{n\in\mathbb{N}}\sum\limits_{|m|\leq n}\Bigg\{\left[-2\mu \,z_n^{(2)}(\kappa_p R)+n(n+1)\mu-\omega^2 R^2 \right] g_n^m(R) X_n^m\boldsymbol e_{\rho} \notag\\
&\quad +\mu\sqrt{n(n+1)}\left[z_n^{(2)}(\kappa_p R)-1\right] g_n^m(R) \boldsymbol U_n^m\Bigg\} \notag\\
&\quad +\frac{\mu}{R^2}\sum\limits_{n\in\mathbb{N}}\sum\limits_{|m|\leq n}\Bigg\{\left[z_n^{(2)}(\kappa_s R) +\kappa_s^2 R^2
+1-n(n+1)\right] q_{2n}^m(R)\boldsymbol U_n^m \notag\\
&\quad -\sqrt{n(n+1)}\left[z_n^{(2)}(\kappa_s R)-1\right]q_{2n}^m(R)X_n^m\boldsymbol e_{\rho}
-\frac{z_n^{(2) }(\kappa_s R)\kappa_s^2 R^2}{\sqrt{n(n+1)}}
q_{3n}^m(R) \boldsymbol V_n^m \Bigg\} \notag
\end{align}
Note that \eqref{Dp} is the complex conjugate of (16) in \cite{LY-ipi}. Using the fact that the transparent boundary conditions of $g$ and $\boldsymbol{q}$ are the complex conjugate of the original transparent boundary conditions, we deduce $
\mathscr{D}\boldsymbol{p}=\mathscr{T}^*\boldsymbol{p}$ on $\Gamma_R$,  which completes the proof. 
\end{proof}

\begin{lemma}\label{3Ddual_estimatep}
Let
\[
\boldsymbol{p}(\rho, \theta, \varphi)=\sum\limits_{n=0}^{\infty}\sum\limits_{m=-n}^n p_{1n}^m(\rho)\boldsymbol U_n^m(\theta, \varphi)+p_{2n}^m(\rho)\boldsymbol V_n^m(\theta, \varphi)+p_{3n}^m(\rho) X_n^m(\theta, \varphi)\boldsymbol e_{\rho}
\]
be the solution of \eqref{section4_dual1problem} in $B_{R}\setminus\overline{ B_{R'}}$. Then the following estimate holds:
\[
|p_{jn}^m(R)|\lesssim n\left(\frac{R'}{R}\right)^{n}\sum\limits_{i=1}^3|p_{in}^m(R')|+\frac{1}{n}\|\boldsymbol{\xi}\|_{L^{\infty}([R', R])},\quad j=1,2,3.
\]
\end{lemma}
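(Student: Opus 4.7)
The plan is to exploit the decomposition $\boldsymbol{p}=\nabla g+\nabla\times\boldsymbol{q}$ provided by Lemma \ref{step1}, reduce the problem for $\boldsymbol{p}$ to mode-by-mode two-point ODE boundary value problems for $g_n^m$ and $q_{jn}^m$ on $[R',R]$, and then use sharp large-$n$ asymptotics of spherical Hankel functions together with the explicit formulas \eqref{section4_zeta}--\eqref{section4_Z3nm} for $\zeta_n^m$ and $Z_{jn}^m$.

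First I would use expansion \eqref{section4_p} to write each $p_{jn}^m(\rho)$ as a linear combination of $g_n^m(\rho)$, $q_{jn}^m(\rho)$ and their first derivatives (with the divergence-free relation \eqref{q1q3} eliminating $q_{1n}^m$). Therefore it suffices to prove the stated bound for the Fourier coefficients of $g$ and $\boldsymbol{q}$ at $\rho=R$. For each fixed $(n,m)$, the Helmholtz equation for $g$ in \eqref{section4_HelmholtzD} separates into a radial ODE
\[
g_n^{m''}+\frac{2}{\rho}g_n^{m'}+\Bigl(\kappa_p^2-\frac{n(n+1)}{\rho^2}\Bigr)g_n^m=-\frac{1}{\lambda+2\mu}\zeta_n^m(\rho),
\]
with the outgoing (adjoint) DtN condition at $\rho=R$ and prescribed interface value at $\rho=R'$. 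Likewise, using \eqref{curlcurlspherical}, the vector Maxwell system for $\boldsymbol{q}$ reduces to analogous radial ODEs for $q_{2n}^m$ and for $v_n^m=\rho q_{3n}^m$ driven by $Z_{2n}^m$ and $Z_{1n}^m$ (through $\beta_n^m$), with the corresponding adjoint DtN conditions at $R$.

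Next, for each of these scalar problems I would solve via variation of parameters using the homogeneous solutions $h_n^{(2)}(\kappa R\rho)$ and $j_n(\kappa\rho)$ (Hankel of the second kind since the adjoint flips the sign), and isolate two pieces: a homogeneous part matching the Cauchy data at $\rho=R'$, and a particular integral picking up the forcing on $[\rho,R]$. The homogeneous part evaluated at $R$ carries the ratio $h_n^{(2)}(\kappa R)/h_n^{(2)}(\kappa R')$, and invoking the small-argument/large-order asymptotics of the spherical Hankel functions recorded in Appendix \ref{appendixDtN} (cf.\ Lemma \ref{Bessel5} and the estimates \eqref{Lambdan}--\eqref{Thm2_bessel} already used in the proof of Lemma \ref{Firsttwo1}) produces a factor $\lesssim n(R'/R)^{n}$, yielding the first term of the estimate with $\sum_{i=1}^3|p_{in}^m(R')|$ (after translating $g_n^m(R'), q_{jn}^m(R')$ back to $p_{in}^m(R')$ via \eqref{section4_p}). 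For the particular integral, I would plug in the explicit representations \eqref{section4_zeta}--\eqref{section4_Z3nm} for $\zeta_n^m$ and $Z_{jn}^m$, whose kernels $c_1,\dots,c_4$ are powers $(\rho/\tau)^{\pm(n,\,n+1,\,n+2)}$. The radial integrals against the Green kernel involve double integrals of these monomials, which upon explicit computation produce coefficients $O(1/n)$; combined with the pointwise bound $|\xi_{jn}^m(\tau)|\le \|\boldsymbol{\xi}\|_{L^\infty([R',R])}$ this gives the second term $\tfrac{1}{n}\|\boldsymbol{\xi}\|_{L^\infty([R',R])}$.

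The main obstacle will be item (ii): tracking the cancellations in the particular-solution integrals so that both factors of $n$ produced by naïve estimates of $\beta_n^m{}'$ and of the $-n(n+1)/\rho^2$ coefficient are absorbed, yielding the clean $1/n$ decay rather than $O(1)$. A secondary technical point is handling the $q_{3n}^m$ mode, where one must propagate through the substitution $v_n^m=\rho q_{3n}^m$ and through the identity used in \eqref{q1q3}, ensuring that the contribution of $Z_{1n}^m$ and $\zeta$ (which vanish on $\Gamma_R$ by construction \eqref{section4_Helmholtzxi}) does not reintroduce uncontrolled terms after differentiation at $\rho=R$. Once these asymptotic and cancellation bookkeeping issues are handled, assembling $p_{jn}^m(R)$ via \eqref{section4_p} and \eqref{q1q3} gives the claimed inequality.
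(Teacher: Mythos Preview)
Your proposal is correct and follows essentially the same route as the paper: the paper invokes the explicit variation-of-parameters formulas for $g_n^m$, $q_{2n}^m$, $v_n^m=\rho q_{3n}^m$ (Lemmas \ref{step21}--\ref{step23}), packages the relation between $\boldsymbol p_n^m$ and $(g_n^m,q_{2n}^m,q_{3n}^m)$ via the matrix $K_n$ from \eqref{DtN_Kn}, and arrives at $\boldsymbol p_n^m(R)=\overline{Q_n}\,\boldsymbol p_n^m(R')-\overline{Q_n}\,\boldsymbol d_n^m+\overline{K_n(R)}\,\overline{M_{\rm diag}}\,\boldsymbol b_n^m$, then reuses $\|Q_n\|\lesssim n(R'/R)^n$ from Lemma \ref{Firsttwo1} and shows $|\boldsymbol b_n^m|,|\boldsymbol d_n^m|\lesssim n^{-2}(R/R')^n\|\boldsymbol\xi\|_{L^\infty}$. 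The one point you slightly understate is that ``translating $g_n^m(R'),q_{jn}^m(R')$ back to $p_{in}^m(R')$ via \eqref{section4_p}'' is not a pure linear inversion: at $\rho=R'$ the expression \eqref{section4_p} involves $g_n^{m\prime}(R')$ and $q_{2n}^{m\prime}(R')$, which by \eqref{appendix_g'R'} and \eqref{solutiondq2} carry additional forcing integrals, so an extra correction vector $\boldsymbol d_n^m$ appears and must be estimated alongside $\boldsymbol b_n^m$.
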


\begin{proof}
It follows from \eqref{section4_p}, \eqref{appendix_gR}, and \eqref{appendix_q2R} that 
\begin{align*}
p_{1n}^m(R) &= \frac{\sqrt{n(n+1)}}{R}g_n^m(R)-\frac{1}{R} q_{2n}^m(R)-\frac{1}{R}z_n^{(2)}(\kappa_s R) q_{2n}^m(R)\\
&=  \frac{\sqrt{n(n+1)}}{R}\left[S_n^p(R) g_n^m(R')+\frac{{\rm i}\kappa_p}{2}\int_{R'}^R t^2 S_n^p(R) W^p_n(R', t)\hat{\zeta}_n^m(t){\rm d}t\right]\\
&\quad -\frac{1}{R}\left[1+z_n^{(2)}(\kappa_s R)\right]\left[S_n^s(R) q_{2n}^m(R')+\frac{{\rm i}\kappa_s}{2}\int_{R'}^R t^2 S_n^s(R) W^s_n(R', t)\hat{Z}_{2n}^m(t){\rm d}t\right].
\end{align*}
Using \eqref{section4_p} and \eqref{appendix_q3R} yields 
\begin{align*}
p_{2n}^m(R) &= -\frac{1}{\sqrt{n(n+1)}}\kappa_s^2 R q_{3n}^m(R)\\
&= -\frac{1}{\sqrt{n(n+1)}}\kappa_s^2 R \left[\frac{R'}{R}S_n^s(R) q_{3n}^m(R')
+\frac{{\rm i}\kappa_s}{2R}\int_{R'}^R t^3 S_n^s(R) W^s_n(R', t)\hat{Z}_{3n}^m(t){\rm d}t\right]\\
&= -\frac{1}{\sqrt{n(n+1)}}\kappa_s^2  \left[R' S_n^s(R) q_{3n}^m(R')
+\frac{{\rm i}\kappa_s}{2}\int_{R'}^R t^3 S_n^s(R) W^s_n(R', t)\hat{Z}_{3n}^m(t){\rm d}t\right].
\end{align*}
We have from \eqref{section4_p}, \eqref{appendix_gR}, and \eqref{appendix_q2R} that
\begin{align*}
p_{3n}^m(R) &= \frac{1}{R}z_n^{(2)}(\kappa_p R) g_n^m(R)-\frac{\sqrt{n(n+1)}}{R}q_{2n}^m(R)\\
&=\frac{1}{R}z_n^{(2)}(\kappa_p R)\left[ S_n^p(R) g_n^m(R')
+\frac{{\rm i}\kappa_p}{2}\int_{R'}^R t^2 S_n^p(R) W^p_n(R', t)\hat{\zeta}_n^m(t){\rm d}t\right]\\
&\quad -\frac{\sqrt{n(n+1)}}{R}\left[S_n^s(R) q_{2n}^m(R')
+\frac{{\rm i}\kappa_s}{2}\int_{R'}^R t^2 S_n^s(R) W^s_n(R', t)\hat{Z}_{2n}^m(t){\rm d}t\right].
\end{align*}

Denote a diagonal matrix by
\[
M_{\rm diag}={\rm diag}\left(\frac{h_n^{(1)}(\kappa_p R)}{h_n^{(1)'}(\kappa_p R')},\quad \frac{h_n^{(1)}(\kappa_s R)}{h_n^{(1)'}(\kappa_s R')},\quad \frac{h_n^{(1)}(\kappa_s R)}{h_n^{(1)'}(\kappa_s R')}\right). 
\]
It is easy to check that
\begin{align}\label{pR}
\left(p_{1n}^m(R), p_{2n}^m(R), p_{3n}^m(R)\right)^\top&=\overline{K_n(R)} \overline{M_{\rm diag}}\left(g_n^m(R'), q_{2n}^m(R'), q_{3n}^m(R')\right)^\top \notag\\
&\quad +\overline{K_n(R)} \overline{M_{\rm diag}} \left(b_{1n}^m, b_{2n}^m, b_{3n}^m\right)^\top,
\end{align}
where
\begin{align*}
\boldsymbol{b}_n^m= \left(b_{1n}^m, b_{2n}^m, b_{3n}^m\right)^\top=\bigg(\frac{{\rm i}\kappa_p}{2}\int_{R'}^R t^2  W^p_n(R', t)\hat{\zeta}_n^m(t){\rm d}t,\, \frac{{\rm i}\kappa_s}{2}\int_{R'}^R t^2 W^s_n(R', t)\hat{Z}_{2n}^m(t){\rm d}t, \\
\, \frac{{\rm i}\kappa_s}{2RR'} \int_{R'}^R t^3 W^s_n(R', t)\hat{Z}_{3n}^m(t){\rm d}t \bigg)^\top.
\end{align*}

Next is to estimate $\boldsymbol{p}$ at $\rho=R'$. By \eqref{section4_p} and \eqref{solutiondq3}, we have 
\begin{align}\label{p1R1}
p_{1n}^{m}(R') &= \frac{\sqrt{n(n+1)}}{R'}g_n^m(R')-\frac{1}{R'} q_{2n}^m(R')-q_{2n}^{m'}(R')\notag\\
&= \frac{\sqrt{n(n+1)}}{R'}g_n^m(R')-\frac{1}{R'} q_{2n}^m(R')-\frac{1}{R'}z_n^{(2)}(\kappa_s R') q_{2n}^m(R')
-\frac{2\kappa_s}{\pi R'}\int_{R'}^R t^2 S_n^s(t)\hat{Z}_{2n}^m(t){\rm d}t\notag\\
&=  \frac{\sqrt{n(n+1)}}{R'}g_n^m(R')-\left[1+z_n^{(2)}(\kappa_s R')\right]\frac{1}{R'} q_{2n}^m(R')
-\frac{2\kappa_s}{\pi R'}\int_{R'}^R t^2 S_n^s(t)\hat{Z}_{2n}^m(t){\rm d}t.
\end{align}
A simple calculation from \eqref{section4_p}--\eqref{q1q3} yields 
\begin{equation}\label{p2R1}
p_{2n}^m(R') = -\frac{1}{\sqrt{n(n+1)}}\left[\kappa_s^2 R' q_{3n}^m(R')+R' \hat{Z}_{3n}^{m}(R')\right]. 
\end{equation}
It follows from \eqref{section4_p} and \eqref{appendix_g'R'} that
\begin{align}\label{p3R1}
p_{3n}^m(R') &= g_{n}^{m'}(R')-\frac{\sqrt{n(n+1)}}{R'}q_{2n}^m(R')\notag\\
&= \frac{1}{R'} z_n^{(2)}(\kappa_p R')g_n^m(R')+\frac{2\kappa_p}{\pi R'}\int_{R'}^R t^2 S^p_n(t)\hat{\zeta}_n^m(t){\rm d}t
-\frac{\sqrt{n(n+1)}}{R'}q_{2n}^m(R').
\end{align}
We get from \eqref{p1R1}--\eqref{p3R1} that
\begin{equation}\label{pR1}
\left(p_{1n}^m(R'), p_{2n}^m(R'), p_{3n}^m(R')\right)^\top=\overline{K_n(R')}\left(g_n^m(R'), q_{2n}^m(R'), q_{3n}^m(R')\right)^\top+\left(d_{1n}^m, d_{2n}^m, d_{3n}^m\right)^\top,
\end{equation}
where
\begin{align}\label{3Ddual_dnm}
\boldsymbol{d}_n^m =\left(d_{1n}^m, d_{2n}^m, d_{3n}^m\right)^\top&=\bigg(-\frac{2\kappa_s}{\pi R'}\int_{R'}^R t^2 S_n^s(t)\hat{Z}_{2n}^m(t){\rm d}t,\notag\\
&\quad \, -\frac{1}{\sqrt{n(n+1)}}R' \hat{Z}_{3n}^{m}(R'),\, \frac{2\kappa_p}{\pi R'}\int_{R'}^R t^2 S^p_n(t)\hat{\zeta}_n^m(t){\rm d}t\bigg)^\top. 
\end{align}
Substituting \eqref{pR1} into \eqref{pR} and using the definition \eqref{section4_uRR1}, we obtain 
\begin{eqnarray}\label{3Ddual_fpR}
	\boldsymbol{p}_n^m(R) 
	=\overline{Q_n}\,\boldsymbol{p}_n^m(R')
		-\overline{Q_n}\,\boldsymbol{d}_n^m+\overline{K_n(R)}\, \overline{M_{\rm diag}}\,\boldsymbol{b}_n^m.\\\notag
\end{eqnarray}

By \eqref{section4_zeta}, it can be verified that
\begin{align*}
b_{1n}^m &= \frac{1}{\lambda+2\mu} \frac{{\rm i}\kappa_p}{2}\int_{R'}^R t^2  W^p_n(R', t)\zeta_n^m(t){\rm d}t \\
&= \frac{1}{\lambda+2\mu} \frac{{\rm i}\kappa_p}{2}\frac{1}{2n+1}\int_{R'}^R t^2  W^p_n(R', t)\Bigg[\int_{t}^R\left[-n\left(\frac{t}{\tau}\right)^{-n-1}-(n+1) \left(\frac{t}{\tau}\right)^{n}\right]\xi_{3n}^m(\tau)\\
&\quad -\sqrt{n(n+1)}\left[\left(\frac{t}{\tau}\right)^{-n-1}-\left(\frac{t}{\tau}\right)^{n}\right]
\xi_{1n}^m(\tau){\rm d}\tau\Bigg]{\rm d}t. 
\end{align*}
We have from \eqref{section4_Z1nm} that 
\begin{align*}
b_{2n}^m &= \frac{{\rm i}\kappa_s}{2\mu}\int_{R'}^R t^2 W^s_n(R', t)Z_{2n}^m(t){\rm d}t \\
&= \frac{1}{2n+1}\frac{{\rm i}\kappa_s}{2\mu}\int_{R'}^R t^2 W^s_n(R', t)\left[\int_{t}^R\sqrt{n(n+1)}(c_2-c_4)\xi_{3n}^m(\tau)+\left[ (n+1)c_2+n c_4\right]\xi_{1n}^m(\tau){\rm d}\tau\right]{\rm d}t \\
&= \frac{1}{2n+1}\frac{{\rm i}\kappa_s}{2\mu}\int_{R'}^R t^2 W^s_n(R', t)\Bigg[\int_{t}^R\sqrt{n(n+1)}\left[ \left(\frac{t}{\tau}\right)^{-n-1}-\left(\frac{t}{\tau}\right)^{n}\right]\xi_{3n}^m(\tau)\\
&\quad + \left[ (n+1) \left(\frac{t}{\tau}\right)^{-n-1}+n \left(\frac{t}{\tau}\right)^{n}\right]\xi_{1n}^m(\tau)\,{\rm d}\tau \Bigg]{\rm d}t. 
\end{align*}
It follows from \eqref{section4_Z3nm} that
\begin{align*}
b_{3n}^m &= \frac{{\rm i}\kappa_s}{2\mu RR'} \int_{R'}^R t^3 W^s_n(R', t)Z_{3n}^m(t){\rm d}t \\
&= \frac{{\rm i}\kappa_s}{2\mu R R'} \frac{1}{2n+1} \int_{R'}^R t^3 W^s_n(R', t)\left[\int_{t}^R\sqrt{n(n+1)}(c_1-c_3) \xi_{2n}^m(\tau){\rm d}\tau\right]{\rm d}t\\
&=\frac{{\rm i}\kappa_s}{2\mu R R'} \frac{1}{2n+1} \int_{R'}^R t^3 W^s_n(R', t)\left[\int_{t}^R\sqrt{n(n+1)}\left[\left(\frac{t}{\tau}\right)^{-n-2}-\left(\frac{t}{\tau}\right)^{n-1}\right] \xi_{2n}^m(\tau){\rm d}\tau\right]{\rm d}t. 
\end{align*}
Substituting \eqref{section4_zeta} -- \eqref{section4_Z3nm} into \eqref{3Ddual_dnm}, we obtain 
\begin{align*}
d_{1n}^m &= -\frac{2\kappa_s}{\pi \mu R'}\int_{R'}^R t^2 S_n^s(t) Z_{2n}^m(t){\rm d}t \\
&=-\frac{2\kappa_s}{\pi \mu R'}\frac{1}{2n+1}\int_{R'}^R t^2 S_n^s(t)\Bigg[\int_{t}^R\sqrt{n(n+1)}\left[ \left(\frac{t}{\tau}\right)^{-n-1}-\left(\frac{t}{\tau}\right)^{n}\right]\xi_{3n}^m(\tau)\\
&\quad +\left[ (n+1) \left(\frac{t}{\tau}\right)^{-n-1}+n \left(\frac{t}{\tau}\right)^{n}\right]\xi_{1n}^m(\tau){\rm d}\tau
\Bigg]{\rm d}t, \\
d_{2n}^m &= -\frac{1}{\sqrt{n(n+1)}}\frac{R'}{\mu} Z_{3n}^{m}(R') \\
&= -\frac{1}{\sqrt{n(n+1)}}\frac{R'}{\mu} \frac{1}{2n+1}\int_{R'}^R\sqrt{n(n+1)}
\left[\left(\frac{R'}{\tau}\right)^{-n-2}-\left(\frac{R'}{\tau}\right)^{n-1}\right] \xi_{2n}^m(\tau){\rm d}\tau, \\
d_{3n}^m &= \frac{1}{\lambda+2\mu}\frac{2\kappa_p}{\pi R'}\int_{R'}^R t^2 S^p_n(t)\zeta_n^m(t){\rm d}t \\
&=\frac{1}{\lambda+2\mu}\frac{2\kappa_p}{\pi R'}\frac{1}{2n+1}\int_{R'}^R t^2 S^p_n(t)\Bigg[\int_{t}^R\left[-n\left(\frac{t}{\tau}\right)^{-n-1}-(n+1) \left(\frac{t}{\tau}\right)^{n}\right]\xi_{3n}^m(\tau)\\
&\quad -\sqrt{n(n+1)}\left[\left(\frac{t}{\tau}\right)^{-n-1}-\left(\frac{t}{\tau}\right)^{n}\right]\xi_{1n}^m(\tau){\rm d}\tau\Bigg]{\rm d}t.
\end{align*}

For sufficiently large $n$, it is shown in Lemma \ref{Firsttwo1} and  \cite{bzh-2020} that
\begin{eqnarray*}
\|Q_n\|\lesssim n\left(\frac{R'}{R}\right)^{n},\quad\|
\overline{K_n(R)}\overline{M_{\rm diag}}\|\lesssim n\left(\frac{R'}{R}\right)^{n},\quad 
\left|W_n(R', t)\right|\lesssim \frac{1}{n} \left(\frac{t}{R'}\right)^{n}.
\end{eqnarray*}
Substituting the above estimates into $\boldsymbol{b}_n^m$ and $\boldsymbol{d}_n^m$ leads to 
\[
\left|b_{jn}^m\right|\lesssim \frac{1}{n^2}\left(\frac{R}{R'}\right)^{n}\|\boldsymbol{\xi}\|_{L^{\infty}([R', R])},\quad
\left|d_{jn}^m\right|\lesssim \frac{1}{n^2} \left(\frac{R}{R'}\right)^{n}\|\boldsymbol{\xi}\|_{L^{\infty}([R', R])}.
\]
Hence we have from \eqref{3Ddual_fpR} that 
\begin{align*}
\left|\boldsymbol{p}_n^m(R)\right| &\leq \left|\overline{Q_n}\boldsymbol{p}_n^m(R')\right|+\left|\overline{Q_n}\boldsymbol{d}_n^m\right|+\left|\overline{K_n(R)}\overline{M_{\rm diag}}\boldsymbol{b}_n^m\right|\\
&\lesssim n\left(\frac{R'}{R}\right)^{n} \sum\limits_{i=1}^3\left|p_{in}^m(R')\right|
+\frac{1}{n}\|\boldsymbol{\xi}\|_{L^{\infty}([R', R])}+\frac{1}{n}\|\boldsymbol{\xi}\|_{L^{\infty}([R', R])},
\end{align*}
which completes the proof.
\end{proof}

\begin{lemma}\label{3Ddual_mainlemma}
Let $\boldsymbol{p}$ be the solution of the dual problem \eqref{section4_dual1problem}. For sufficiently large $N$,
the following estimate holds:
\[
\left|\int_{\Gamma_R}\left(\mathscr{T}-\mathscr{T}_N\right)\boldsymbol{\xi}\cdot\overline{\boldsymbol{p}}\right|\,{\rm d}s
\lesssim \frac{1}{N}\|\boldsymbol{\xi}\|_{\boldsymbol{H}^1(\Omega)}^2.
\]
\end{lemma}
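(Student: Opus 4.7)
The plan is to Fourier-expand both $\boldsymbol{\xi}$ and $\boldsymbol{p}$ on $\Gamma_R$ in the orthonormal basis $\{\boldsymbol{U}_n^m,\boldsymbol{V}_n^m,X_n^m\boldsymbol{e}_\rho\}$. Because $\mathscr{T}-\mathscr{T}_N$ retains only the modes with $n>N$, orthogonality reduces the boundary integral to the scalar tail series
$$\int_{\Gamma_R}\left(\mathscr{T}-\mathscr{T}_N\right)\boldsymbol{\xi}\cdot\overline{\boldsymbol{p}}\,{\rm d}s
= R^{2}\sum_{n>N}\sum_{|m|\le n}\bigl(M_n\,\boldsymbol{\xi}_n^m(R)\bigr)\cdot\overline{\boldsymbol{p}_n^m(R)}.$$
Each summand is then controlled by the matrix estimate $\|M_n\|\lesssim n$ (the appendix bound already used in Lemma~\ref{Firsttwo1}) and by the two-part decomposition of $|\boldsymbol{p}_n^m(R)|$ supplied by Lemma~\ref{3Ddual_estimatep}. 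Substituting, the total splits as $\,S_1+S_2\,$ with
$$S_1:=\sum_{n>N,\,m} n^{2}\Bigl(\tfrac{R'}{R}\Bigr)^{n}|\boldsymbol{\xi}_n^m(R)|\sum_{i=1}^{3}|p_{in}^m(R')|,\qquad S_2:=\sum_{n>N,\,m}|\boldsymbol{\xi}_n^m(R)|\,\|\boldsymbol{\xi}\|_{L^{\infty}([R',R])}.$$

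For $S_1$, I would pull out $\max_{n>N}\{n(R'/R)^n\}$ and apply Cauchy--Schwarz in $(n,m)$ weighted by $(1+n)$. Parseval identifies the two resulting sums with $\|\boldsymbol{\xi}\|_{\boldsymbol{H}^{1/2}(\Gamma_R)}^{2}$ and $\|\boldsymbol{p}\|_{\boldsymbol{H}^{1/2}(\Gamma_{R'})}^{2}$; the trace inequality (Lemma~\ref{Poincare}), combined with the $\boldsymbol{L}^{2}\!\to\!\boldsymbol{H}^{1}$ stability of the dual problem $\|\boldsymbol{p}\|_{\boldsymbol{H}^{1}(\Omega)}\lesssim \|\boldsymbol{\xi}\|_{\boldsymbol{L}^{2}(\Omega)}\le\|\boldsymbol{\xi}\|_{\boldsymbol{H}^{1}(\Omega)}$, bounds each factor by $\|\boldsymbol{\xi}\|_{\boldsymbol{H}^{1}(\Omega)}$. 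Because $n^{2}(R'/R)^{n}$ decays exponentially, $S_1$ is in fact exponentially smaller than $N^{-1}\|\boldsymbol{\xi}\|_{\boldsymbol{H}^{1}(\Omega)}^{2}$ for sufficiently large $N$.

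For $S_2$ the $\frac{1}{n}$ from Lemma~\ref{3Ddual_estimatep} has been cancelled by the $n$ from $\|M_n\|$, so the required $N^{-1}$ gain has to come entirely from the tail cut-off $n>N$. The strategy is a weighted Cauchy--Schwarz
$$\sum_{n>N,\,m}|\boldsymbol{\xi}_n^m(R)|\,a_{n,m}\;\le\;\Bigl(\sum_{n>N,\,m}(1+n)|\boldsymbol{\xi}_n^m(R)|^{2}\Bigr)^{1/2}\Bigl(\sum_{n>N,\,m}\tfrac{a_{n,m}^{2}}{1+n}\Bigr)^{1/2},$$
where $a_{n,m}$ is the modewise content of $\|\boldsymbol{\xi}\|_{L^{\infty}([R',R])}$, namely $\sup_{\rho\in[R',R]}|\xi_{in}^{m}(\rho)|$, which is the quantity actually produced by the integrands in $b_{jn}^m$ and $d_{jn}^m$ in the proof of Lemma~\ref{3Ddual_estimatep}. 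The first factor is bounded by $\|\boldsymbol{\xi}\|_{\boldsymbol{H}^{1/2}(\Gamma_R)}\lesssim \|\boldsymbol{\xi}\|_{\boldsymbol{H}^{1}(\Omega)}$ via trace and Parseval. For the second, a fundamental-theorem-of-calculus argument in $\rho$ on the annulus yields
$$\sum_{n,m,i}\sup_{\rho\in[R',R]}|\xi_{in}^{m}(\rho)|^{2}\;\lesssim\;\|\boldsymbol{\xi}\|_{\boldsymbol{H}^{1}(B_R\setminus\overline{B_{R'}})}^{2}\;\lesssim\;\|\boldsymbol{\xi}\|_{\boldsymbol{H}^{1}(\Omega)}^{2},$$
and the tail weight $\frac{1}{1+n}\le N^{-1}$ turns this into the required $N^{-1}$ factor on $S_2$.

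The principal obstacle is $S_2$: once the $n\cdot\frac{1}{n}$ cancellation takes place, no termwise decay remains, so the sharp $N^{-1}$ rate must be read off the tail plus a careful choice of weights in Cauchy--Schwarz. Two points require care. First, $\|\boldsymbol{\xi}\|_{L^{\infty}([R',R])}$ cannot be interpreted as the three-dimensional pointwise supremum (which is not controlled by $\|\boldsymbol{\xi}\|_{\boldsymbol{H}^{1}(\Omega)}$ in $\mathbb{R}^{3}$) but must be taken modewise as in Lemma~\ref{3Ddual_estimatep}. Second, the weights in Cauchy--Schwarz have to be balanced so that both factors are simultaneously controlled by $\|\boldsymbol{\xi}\|_{\boldsymbol{H}^{1}(\Omega)}$ while the tail still produces $N^{-1}$; the modal sup-in-$\rho$ inequality above is the technical key that makes this closure possible, and together with the $S_1$ estimate it completes the proof.
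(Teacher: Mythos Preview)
Your overall strategy---Fourier expand, use $\|M_n\|\lesssim n$, feed in Lemma~\ref{3Ddual_estimatep}, and close with weighted Cauchy--Schwarz---is sound and close in spirit to the paper's proof. The paper, however, reverses the order: it applies Cauchy--Schwarz \emph{first}, with an asymmetric weight that pulls a full factor $[n(1+n(n+1))^{1/2}]^{-1/2}\sim n^{-1}\le N^{-1}$ outside the square roots, and only \emph{then} invokes Lemma~\ref{3Ddual_estimatep} to bound the remaining quantity $\sum_{n>N}\sum_{|m|\le n} n^3|p_{jn}^m(R)|^2$ by $\|\boldsymbol{\xi}\|_{\boldsymbol{H}^1(\Omega)}^2$. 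Because the $N^{-1}$ is extracted before any square root is taken, no loss of a half power occurs.

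Your treatment of $S_2$ has exactly that loss. After Cauchy--Schwarz you are left with the factor
\[
\Bigl(\sum_{n>N,\,m}\frac{a_{n,m}^{2}}{1+n}\Bigr)^{1/2},
\]
and you bound it by pulling out $\frac{1}{1+n}\le N^{-1}$ and invoking $\sum_{n,m,i}\sup_\rho|\xi_{in}^m(\rho)|^2\lesssim\|\boldsymbol{\xi}\|_{\boldsymbol{H}^1}^2$. That yields only $N^{-1/2}\|\boldsymbol{\xi}\|_{\boldsymbol{H}^1}$ after the square root, hence $S_2\lesssim N^{-1/2}\|\boldsymbol{\xi}\|_{\boldsymbol{H}^1}^2$, one half power short. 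The plain fundamental-theorem-of-calculus bound you quote is not sharp enough here: it does not exploit the extra factor of $n^2$ that the angular part of the $\boldsymbol{H}^1$ norm supplies.

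The fix, if you want to keep your order of operations, is to use the $n$-weighted one-dimensional Sobolev inequality (the one the paper cites from \cite{jllz-2017-ccp}):
\[
\sup_{\rho\in[R',R]}|\xi_{jn}^m(\rho)|^2 \;\le\; \Bigl(\tfrac{2}{R-R'}+n\Bigr)\|\xi_{jn}^m\|_{L^2([R',R])}^2 + \tfrac{1}{n}\|\xi_{jn}^{m\prime}\|_{L^2([R',R])}^2,
\]
which, summed against the radial--angular decomposition of $\|\boldsymbol{\xi}\|_{\boldsymbol{H}^1(B_R\setminus B_{R'})}^2$, gives the stronger statement $\sum_{n,m,i} n\,\sup_\rho|\xi_{in}^m(\rho)|^2\lesssim\|\boldsymbol{\xi}\|_{\boldsymbol{H}^1}^2$. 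With this in hand,
\[
\sum_{n>N,\,m}\frac{a_{n,m}^{2}}{1+n}=\sum_{n>N,\,m}\frac{1}{n(1+n)}\bigl(n\,a_{n,m}^{2}\bigr)\le \frac{1}{N^{2}}\sum_{n,m} n\,a_{n,m}^{2}\lesssim \frac{1}{N^{2}}\|\boldsymbol{\xi}\|_{\boldsymbol{H}^1}^2,
\]
and the square root now delivers the full $N^{-1}$. Your $S_1$ argument is fine.
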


\begin{proof}
It is shown in \cite{LY-ipi} that all the elements of the DtN matrix $M_n$ have an order $n$. Hence, 
	\begin{align}\label{3Ddual_finallemma}
		&\left|\int_{\Gamma_R}\left(\mathscr{T}-\mathscr{T}_N\right)\boldsymbol{\xi}\cdot\overline{\boldsymbol{p}}\,
			{\rm d}s\right|\leq \left|\sum\limits_{n=N+1}\sum\limits_{|m|\leq n} 
			M_{n} \boldsymbol{\xi}(R)\cdot \overline{\boldsymbol{p}}(R)\right|	\notag\\
		&\leq \sum\limits_{n=N+1}\sum\limits_{|m|\leq n} n \left(\left|\xi_{1n}^m(R)\right|+\left|\xi_{2n}^m(R)\right|
			+\left|\xi_{3n}^m(R)\right|\right) 
			\overline{\left(\left|p_{1n}^m(R)\right|+\left|p_{2n}^m(R)\right|+\left|p_{3n}^m(R)\right|\right)}\notag\\
		&\lesssim\sum\limits_{n=N+1} \left[n\left(1+n(n+1)\right)^{1/2}\right]^{-1/2}
			\left[\sum\limits_{|m|\leq n} \left(1+n(n+1)\right)^{1/2} \left(\left|\xi_{1n}^m(R)\right|^2
			+\left|\xi_{2n}^m(R)\right|^2+\left|\xi_{3n}^m(R)\right|^2\right)\right]^{1/2}\notag\\
		&\quad\times\left[\sum\limits_{|m|\leq n} n^3  \left(\left|p_{1n}^m(R)\right|^2
			+\left|p_{2n}^m(R)\right|^2+\left|p_{3n}^m(R)\right|^2\right)\right]^{1/2} \notag\\
		&\lesssim \frac{1}{N} \|\boldsymbol{\xi}\|_{\boldsymbol{H}^{1/2}(\Gamma_R)}	
			\left[\sum\limits_{n=N+1}\sum\limits_{|m|\leq n} n^3  \left(\left|p_{1n}^m(R)\right|^2
			+\left|p_{2n}^m(R)\right|^2+\left|p_{3n}^m(R)\right|^2\right)\right]^{1/2}\notag\\
		&\lesssim \frac{1}{N} \|\boldsymbol{\xi}\|_{\boldsymbol{H}^1(\Omega)}	
			\left[\sum\limits_{n=N+1}\sum\limits_{|m|\leq n} n^3  \left(\left|p_{1n}^m(R)\right|^2
			+\left|p_{2n}^m(R)\right|^2+\left|p_{3n}^m(R)\right|^2\right)\right]^{1/2}. 
	\end{align}
It follows from Lemma \ref{3Ddual_estimatep} that
	\begin{align*}
		&\sum\limits_{n=N+1}\sum\limits_{|m|\leq n} n^3  \left(\left|p_{1n}^m(R)\right|^2
			+\left|p_{2n}^m(R)\right|^2+\left|p_{3n}^m(R)\right|^2\right)\\
		&\lesssim \sum\limits_{n=N+1}\sum\limits_{|m|\leq n} n^3  
			\left[ n^2\left(\frac{R'}{R}\right)^{2n}\sum\limits_{i=1}^3|p_{in}^m(R')|^2
			+\frac{1}{n^2}\|\boldsymbol{\xi}\|^2_{L^{\infty}([R', R])}\right]\\
		& \lesssim \sum\limits_{n=N+1}\sum\limits_{|m|\leq n} n^5 \left(\frac{R'}{R}\right)^{2n}
			\sum\limits_{i=1}^3|p_{in}^m(R')|^2
			+\sum\limits_{n=N+1}\sum\limits_{|m|\leq n} (1+n(1+n))^{1/2} \|\boldsymbol{\xi}\|^2_{L^{\infty}([R', R])}\\
		&:= J_1+J_2. 
	\end{align*}
Noting that the function $t^4 e^{-2t}$ is bounded on $(0, +\infty),$ we have 
	\[
		J_1 \lesssim \max\limits_{n=N+1}\left(n^4 \left(\frac{R'}{R}\right)^{2n}\right)\sum\limits_{n=N+1}\sum\limits_{|m|\leq n}
		n \sum\limits_{i=1}^3|p_{in}^m(R')|^2\lesssim \|\boldsymbol{p}\|^2_{H^{1/2}(\Gamma_{R'})}
		\lesssim \|\boldsymbol{\xi}\|_{\boldsymbol{H}^1(\Omega)}.
	\]
It is shown in \cite{jllz-2017-ccp} that
\begin{align*}
\|\xi_{jn}^m(t)\|_{L^{\infty}([R', R])}^2 &\leq \left(\frac{2}{R-R'}+n\right)\|\xi_{jn}^m(t)\|_{L^2([R', R])}^2
+\frac{1}{n}\|\xi_{jn}^{m'}(t)\|_{L^2([R', R])}^2,\\
\|\xi_{jn}^m\|^2_{H^1(B_R\backslash B_{R'})} &\geq \int_{R'}^R \left[\left(R'+\frac{n^2}{R}\right)|\xi_{jn}^m(\rho)|^2
+R' \left|\xi_{jn}^{m'}(\rho)\right|^2\right]{\rm d}\rho. 
\end{align*}
Substituting them into $J_2$ gives
	\begin{align*}
		J_2	 &\lesssim \sum\limits_{n=N+1}\sum\limits_{|m|\leq n} (1+n(1+n))^{1/2}
			\sum\limits_{j=1}^3 \left[ \left(\frac{2}{R-R'}+n\right)\|\xi_{jn}^m(t)\|_{L^2([R', R])}^2
			+\frac{1}{n}\|\xi_{jn}^{m'}(t)\|_{L^2([R', R])}^2\right]\\
		&\lesssim \|\boldsymbol{\xi}\|^2_{\boldsymbol{H}^1(B_R\backslash B_{R'})}
			\leq \|\boldsymbol{\xi}\|_{\boldsymbol{H}^1(\Omega)}^2,
	\end{align*}
	which yields 
	\[
		\sum\limits_{n=N+1}\sum\limits_{|m|\leq n} n^3  \left(\left|p_{1n}^m(R)\right|^2
			+\left|p_{2n}^m(R)\right|^2+\left|p_{3n}^m(R)\right|^2\right)
			\lesssim \|\boldsymbol{\xi}\|_{\boldsymbol{H}^1(\Omega)}^2.
	\]
Plugging the above estimate into \eqref{3Ddual_finallemma}, we obtain 
	\[
		\left|\int_{\Gamma_R}\left(\mathscr{T}-\mathscr{T}_N\right)\boldsymbol{\xi}\cdot\overline{\boldsymbol{p}}
			{\rm d}s\right|\lesssim \frac{1}{N} \|\boldsymbol{\xi}\|_{\boldsymbol{H}^1(\Omega)}^2,
	\]
	which completes the proof.
\end{proof}

We are now in position to show the proof of Theorem \ref{Mainthm}. 

\begin{proof}
It follows from \eqref{section4_key} that
\begin{align*}
	\vvvert\boldsymbol{\xi}\vvvert^2_{\boldsymbol{H}^{1}(\Omega)} &=
		\Re b(\boldsymbol{\xi}, \boldsymbol{\xi})+\Re\int_{\Gamma_R}
		\left(\mathscr{T}-\mathscr{T}_N\right)\boldsymbol{\xi}\cdot\overline{\boldsymbol{\xi}}{\rm d}s 
		+2\omega^2 \int_{\Omega}\boldsymbol{\xi}\cdot\overline{\boldsymbol{\xi}}{\rm d}\boldsymbol{x}
		+\Re\int_{\Gamma_R} \mathscr{T}_N\boldsymbol{\xi}\cdot\overline{\boldsymbol{\xi}}{\rm d}s\\	
	&\leq C_1\left[\left(\sum\limits_{T\in M_h} \eta_T^2\right)^{1/2}
		+N\left(\frac{R'}{R}\right)^N \|\boldsymbol u^{\rm inc}\|_{\boldsymbol{H}^1(\Omega)}
		+\|\boldsymbol{g}-\boldsymbol{g}^h\|_{\boldsymbol{H}^{1/2}(\partial D)}\right] 	
		\|\boldsymbol{\xi}\|_{\boldsymbol H^{1}(\Omega)}\\
	&\quad +\left(C_2+C(\delta)\right)\|\boldsymbol{\xi}\|^2_{\boldsymbol{L}^2(\Omega)} 
		+\left(\frac{R}{R'}\right)\delta\|\boldsymbol{\xi}\|^2_{\boldsymbol H^{1}(\Omega)}.	
\end{align*}		
Choosing $\delta$ such that $\frac{R}{\hat R}\frac{\delta}{\min(\mu, \omega^2)}<\frac{1}{2}$, we get
\begin{align}\label{Estimate_H1}
	\vvvert\boldsymbol{\xi}\vvvert^2_{\boldsymbol{H}^{1}(\Omega)}& \leq 
		2C_1\left[\left(\sum\limits_{T\in M_h} \eta_T^2\right)^{1/2}
		+N\left(\frac{R'}{R}\right)^N \|\boldsymbol u^{\rm inc}\|_{\boldsymbol{H}^1(\Omega)}
		+\|\boldsymbol{g}-\boldsymbol{g}^h\|_{\boldsymbol{H}^{1/2}(\partial D)}\right] 
		\|\boldsymbol{\xi}\|_{\boldsymbol H^{1}(\Omega)}\notag\\
&\quad 	+2\left(C_2+C(\delta)\right)\|\boldsymbol{\xi}\|^2_{\boldsymbol{L} ^2(\Omega)}.
\end{align}
Using Lemmas \ref{Poincare}, \ref{Firsttwo2}, and \ref{3Ddual_mainlemma} yields  
\begin{align*}
\|\boldsymbol{\xi}\|^{2}_{\boldsymbol{L}^2(\Omega)}&= b(\boldsymbol{\xi}, \boldsymbol{p})+\int_{\Gamma_R}\left(\mathscr{T}-\mathscr{T}_N\right)\boldsymbol{\xi}\cdot\overline{\boldsymbol{p}}{\rm d}s-\int_{\Gamma_R}\left(\mathscr{T}-\mathscr{T}_N\right)\boldsymbol{\xi}\cdot\overline{\boldsymbol{p}}{\rm d}s\\
&\lesssim \left[\left(\sum\limits_{T\in M_h} \eta_T^2\right)^{1/2}+N\left(\frac{R'}{R}\right)^N \|\boldsymbol u^{\rm inc}\|_{\boldsymbol{H}^1(\Omega)}+\|\boldsymbol{g}-\boldsymbol{g}^h\|_{\boldsymbol{H}^{1/2}(\partial D)}\right] 	
\|\boldsymbol{\xi}\|_{H^{1}(\Omega)}+\frac{1}{N}
\|\boldsymbol{\xi}\|^2_{H^{1}(\Omega)}.
\end{align*}
Substituting the above estimate into \eqref{Estimate_H1} and taking sufficiently large $N$ such that 
\[
\frac{2\left(C_2+C(\delta)\right)}{N}\frac{1}{\min(\mu, \omega^2)}<1,
\]
we obtain 
\begin{eqnarray*}
\vvvert\boldsymbol{u}-\boldsymbol{u}_N^h
\vvvert_{\boldsymbol{H}^{1}(\Omega)}\lesssim 
\left(\sum\limits_{T\in M_h} \eta_T^2\right)^{1/2}
+\|\boldsymbol{g}-\boldsymbol{g}^h\|_{\boldsymbol{H}^{1/2}(\partial D)}
		+N\left(\frac{R'}{R}\right)^N \|\boldsymbol u^{\rm inc}\|_{\boldsymbol{H}^1(\Omega)},
\end{eqnarray*}
which completes the proof. 
\end{proof}

\section{Implementation and numerical experiments}\label{Section_ne}

In this section, we discuss the algorithmic implementation of the adaptive
finite element DtN method and present two numerical examples to demonstrate the
effectiveness of the proposed method. 

\begin{table}
\caption{The adaptive finite element DtN method.}
\hrule \hrule
\vspace{0.8ex}
\begin{enumerate}		
\item Given the tolerance $\epsilon>0, \theta\in(0,1)$;
\item Fix the computational domain $\Omega=B_R\setminus \overline{D}$ by
choosing the radius $R$;
\item Choose $R'$ and $N$ such that $\epsilon_N\leq 10^{-8}$;
\item Construct an initial mesh $\mathcal M_h$ over $\Omega$ and
compute error estimators;
\item While $\epsilon_h>\epsilon$ do
\item \qquad Refine the mesh $\mathcal M_h$ according to the strategy:
\[
\text{if } \eta_{\hat{T}}>\theta \max\limits_{T\in \mathcal M_h}
\eta_{T}, \text{ then refine the element } \hat{T}\in\mathcal M_h;
\]
\item \qquad Solve the discrete problem \eqref{section3_dis1} on the new mesh which is still denoted as $\mathcal M_h$;
\item \qquad Compute the corresponding error estimators;
\item End while.
\end{enumerate}
\vspace{0.8ex}
\hrule\hrule
\end{table}

\subsection{Adaptive algorithm}\label{algorithm}

Based on the a posteriori error estimate in Theorem \ref{Mainthm}, we adopt the FreeFem \cite{H-jnm12} to implement the adaptive algorithm of the linear finite elements. By Theorem \ref{Mainthm}, the a posteriori error estimator consists of three parts: the first two parts are related to the finite element discretization error $\epsilon_h$ and the third one is the DtN truncation error $\epsilon_N$ which depends on the truncation number $N$. Explicitly,
\begin{equation*}
\epsilon_h =  \left(\sum\limits_{T\in \mathcal{M}_h} \eta_T^2\right)^{1/2}+\|\boldsymbol{g}-\boldsymbol{g}^h\|_{\boldsymbol{H}^{1/2}(\partial D)}, \quad \epsilon_N =N \left(\frac{R'}{R}\right)^{N}\|\boldsymbol u^{\rm inc}\|_{\boldsymbol{H}^1(\Omega)}. 
\end{equation*}

In the implementation, the parameters $R', R$, and $N$ are chosen such that the finite element discretization  error is not polluted by the DtN truncation error, i.e., $\epsilon_N$ is required to be very small compared to $\epsilon_h$, for example, 
$\epsilon_N\leq 10^{-8}$. For simplicity, in the following numerical experiments, $R'$ is chosen such that the obstacle is exactly contained in the ball $B_{R'}$, and $N$ is taken to be the smallest positive integer such that $\epsilon_N\leq 10^{-8}$. Table 1 shows the algorithm of the adaptive finite element DtN method for solving the elastic obstacle scattering problem. 

\subsection{TBC matrix construction}
Denote by $\left\{\boldsymbol{\Psi}_j\right\}_{j=1}^{L}$ the basis of the finite element space $\boldsymbol{V}_h$. Then
the finite element approximation of the solution is
\begin{equation}\label{Uapprox}
	\boldsymbol{u}_N=\sum\limits_{j=1}^{L} u_j \boldsymbol{\Psi}_j.
\end{equation}

Recall that the truncated DtN operator \eqref{section3_truncDtN} is
\begin{align}\label{TBCN}
\mathscr{T}_N \boldsymbol{u}_N &= \sum\limits_{n=0}^{N}\sum\limits_{m=-n}^n \Bigg\{\left[M_{11}^{(n,m)} u_{1n}^m(R) +M_{13}^{(n, m)} u_{3n}^m (R)\right] \boldsymbol U_n^m+M_{22}^{(n,m)} u_{2n}^m(R)\boldsymbol V_n^m \notag \\
&\quad +\left[M_{31}^{(n,m)} u_{1n}^m(R) +M_{33}^{(n, m)} u_{3n}^m (R)\right] X_n^m \boldsymbol e_{\rho} \Bigg\},
\end{align}
where 
\begin{equation}\label{ucoef}
 \left(u_{1n}^m(R), u_{2n}^m(R), u_{3n}^m(R) \right)^\top=\int_{\Gamma_R}\boldsymbol u_N\cdot\left(\overline{\boldsymbol U_n^m}, \overline{\boldsymbol V_n^m},  \left(\overline{X_n^m} \boldsymbol e_{\rho}\right)\right)^\top{\rm d}s.
\end{equation}
Substituting \eqref{Uapprox} into \eqref{ucoef} yields 
\begin{equation}\label{uinm}
 \left(u_{1n}^m(R), u_{2n}^m(R), u_{3n}^m(R) \right)^\top=\sum\limits_{j=1}^{L} u_j \int_{\Gamma_R} \boldsymbol{\Psi}_j\cdot\left(\overline{\boldsymbol U_n^m}, \overline{\boldsymbol V_n^m}, \left(\overline{X_n^m}\boldsymbol  e_{\rho}\right) \right)^\top{\rm d}s. 
\end{equation}

Define the components of the vectors $\Phi_{U}^{(n, m)}, \Phi_{V}^{(n, m)},$ and $\Phi_{X}^{(n, m)} $ as follows
\begin{align*}
\Phi_{U,j}^{(n, m)}=\int_{\Gamma_R} \boldsymbol{\Psi}_j\cdot\boldsymbol U_n^m {\rm d}s, \quad
\Phi_{V,j}^{(n, m)}=\int_{\Gamma_R} \boldsymbol{\Psi}_j\cdot\boldsymbol V_n^m {\rm d}s, \quad
\Phi_{X,j}^{(n, m)}=\int_{\Gamma_R} \boldsymbol{\Psi}_j\cdot\left( X_n^m \boldsymbol e_{\rho} \right){\rm d}s.
\end{align*}
Denote by $B$ the TBC matrix. It follows from \eqref{TBCN} and \eqref{uinm} that
\begin{align*}
\sum\limits_{j=1}^{L} B_{ij} u_j &= \int_{\Gamma_R} \mathscr{T}_N\boldsymbol{u}_N\cdot \boldsymbol{\Psi}_i {\rm d}s\\
&=\sum\limits_{n=0}^{N}\sum\limits_{m=-n}^n \Bigg\{\left[M_{11}^{(n, m)} \sum\limits_{j} u_j \overline{\Phi_{U,j}^{(n, m)}}
+M_{13}^{(n, m)} \sum\limits_{j} u_j \overline{\Phi_{X,j}^{(n, m)}}\right]\Phi_{U,i}^{(n, m)}\\
&\quad	+\left[M_{31}^{(n, m)} \sum\limits_{j} u_j \overline{\Phi_{U,j}^{(n, m)}}
			+M_{33}^{(n, m)} \sum\limits_{j} u_j \overline{\Phi_{X,j}^{(n, m)}}\right]
			\Phi_{X,i}^{(n, m)}\\
	&\quad +M_{22}^{(n, m)} \sum\limits_{j} u_j \overline{\Phi_{V,j}^{(n, m)}} \Phi_{V,i}^{(n, m)}\Bigg\},
\end{align*}
which indicates that the TBC matrix is a low rank matrix constructed by vector products. Specifically, we have 
\begin{align}\label{Bmatrix}
B&= \sum\limits_{n=0}^{N}\sum\limits_{m=-n}^n  \Big( M_{11}^{(n, m)} \Phi_U^{(n, m)} \Phi_U^{(n, m)^*}
+M_{13}^{(n, m)} \Phi_U^{(n, m)} \Phi_X^{(n, m)^*}+M_{22}^{(n, m)} \Phi_V^{(n, m)} \Phi_V^{(n, m)^*}\notag\\
&\quad+M_{31}^{(n, m)} \Phi_X^{(n, m)} \Phi_U^{(n, m)^*}+M_{33}^{(n, m)} \Phi_X^{(n, m)} \Phi_X^{(n, m)^*}\Big).
\end{align}
To simplify the notation, define matrices $U$ and $V$  as
\begin{align*}
U&=\Big( \Phi_U^{(n, m)},\Phi_U^{(n, m)}, \Phi_V^{(n, m)}, \Phi_X^{(n, m)}, \Phi_X^{(n, m)} \Big)_{(n, m)}, \\
V&=\Big( M_{11}^{(n, m)}\Phi_U^{(n, m)^*}, M_{13}^{(n, m)}\Phi_X^{(n, m)^*}, M_{22}^{(n, m)}\Phi_V^{(n, m)^*}, M_{31}^{(n, m)}\Phi_U^{(n, m)^*}, M_{33}^{(n, m)}\Phi_X^{(n, m)^*} \Big)_{(n, m)}.
\end{align*}
Then we have from \eqref{Bmatrix} that
\begin{equation}\label{BUV}
B=UV.
\end{equation}

Denote by $A$ the stiffness matrix corresponding to the variational problem with the Neumann condition $\partial_{\nu} \boldsymbol{u}=0$ on $\Gamma_R$ and the Dirichlet boundary condition on $\partial D$, which has the sesquilinear form
\[
	\hat{b}(\boldsymbol{u}, \boldsymbol{v})=
		\mu\int_{\Omega}\nabla\boldsymbol{u}:\nabla\overline{\boldsymbol{v}}\,{\rm d}\boldsymbol{x}
	+(\lambda+\mu)\int_{\Omega}\left(\nabla\cdot\boldsymbol{u}\right)
	\left(\nabla\cdot\overline{\boldsymbol{v}}\right)\,{\rm d}\boldsymbol{x} \notag\\
	-\omega^2\int_{\Omega} \boldsymbol{u}\cdot\overline{\boldsymbol{v}}\,{\rm d}\boldsymbol{x}
\]
Then the stiffness matrix $W$ for the variational problem \eqref{section3_dis1} takes the form
\begin{equation}\label{StiffnessM}
W=A-B. 
\end{equation}

Since the DtN operator is nonlocal, it is clear to note from \eqref{BUV} that the nonzeros of $B$ and $A$
are $O(h^{-4})$ and $O(h^{-3})$, respectively. Figure \ref{figmatrix} plots the sparsity patterns of matrices
$A$ and $W$ with 1805 nodal points in the mesh. Hence, the bandwidth of matrix $W$ is much larger than the bandwidth of matrix $A$. For the above reason, we do not assemble matrix $W$ and solve the resulting linear system directly.

The matrix $A$ is sparse, real and symmetric. It can be handled effectively by many parallel direct solvers. Since $B$ is a low rank matrix given in \eqref{BUV}, the linear system $Wz=b$ can be solved effectively via the generalized
Woodbury matrix identity by using the following steps: (1) construct matrices $U$ and $V$; (2) solve $A z_1=b$
with a parallel direct solver; (3) perform matrix-vector product $z_2=V z_1$; (4) construct matrix $C=A^{-1} U$
and $H=I+VC$; (5) solve a dense but small size linear system $H z_3=z_2$; (6) perform matrix-vector product
$z_4=Cz_3$; (7) construct the solution of the system $Wz=b$ by $z=z_1-z_4$. The above approach has several advantages:
(1) it is only needed to store matrices $A$,  $U$ and $V$. The nonzeros of $U, V$ are $O(h^{-2})$; (2) the bandwidth of matrix $A$ is much smaller, so it is faster to solve $Az_1=b$. Once the linear solver is set up, the construction of matrix $C=A^{-1} U$ is very fast.

\begin{figure}
\centering
\includegraphics[width=0.4\textwidth]{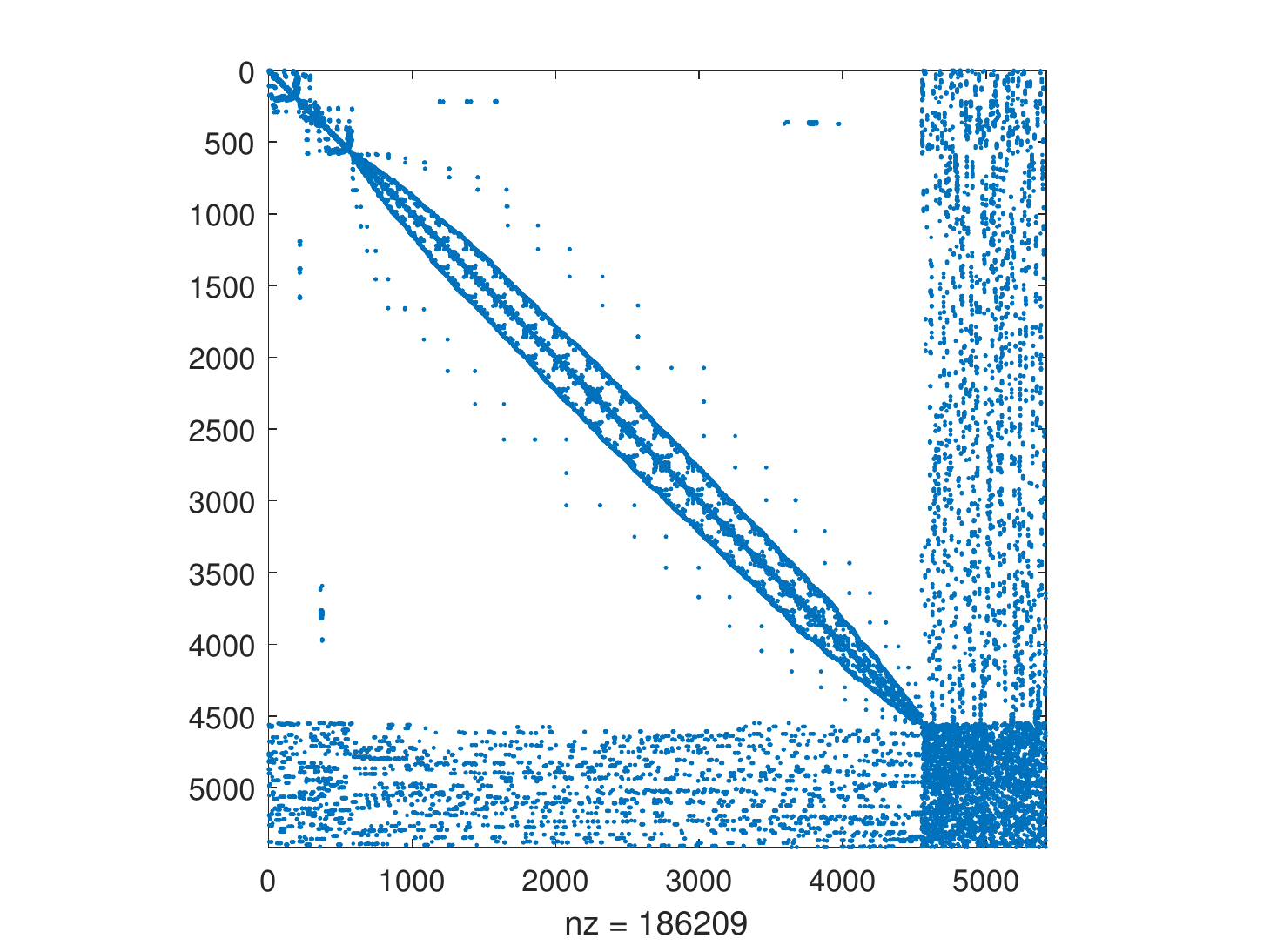}
\includegraphics[width=0.4\textwidth]{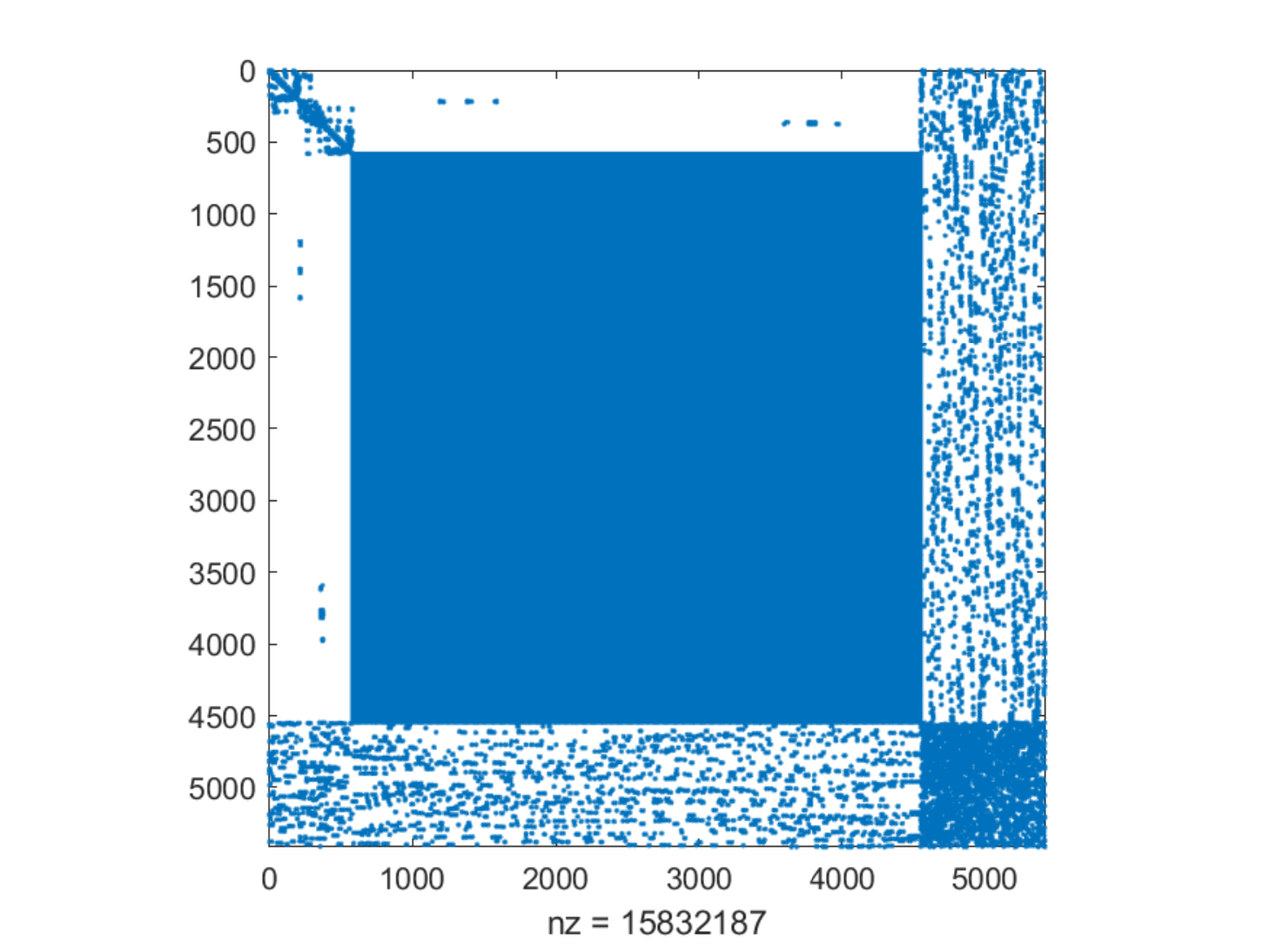}
\caption{Sparsity pattern of the coefficient matrix. (left) Matrix $A$; (right) Matrix $\mathscr{M}$.}
\label{figmatrix}
\end{figure}

\subsection{Numerical examples}

We present two examples to show the performance of the proposed method. In the first example, the obstacle is a ball so that the explicit solution is available. The incident wave is chosen as the third column of the Green Tensor. By comparing the numerical solution with the explicit solution, we are able to report the accuracy of the proposed algorithm. In the second example, we consider a more complex geometry: a rectangular U-shaped obstacle. The obstacle is assumed to be illuminated by a compressional plane wave. We pay particular attention on the mesh refinement around the corners of the obstacle, where the solution has singularity. In both examples, the a posteriori error is plotted against the number of unknowns in order to show the convergence rate.

{\em Example 1}. In this example, we intend to test the accuracy of the proposed algorithm. The obstacle is taken as a ball with radius $0.5$. The transparent boundary condition is set on the sphere $\Gamma_R$ with radius $R=1$.
Denote by $\boldsymbol{G}(\boldsymbol x, \boldsymbol y; \omega)$ the Green Tensor of the three-dimensional elastic wave equation. More explicitly, we have 
\[
\boldsymbol{G}(\boldsymbol x, \boldsymbol y; \omega)=\frac{1}{\mu}g(\boldsymbol x, \boldsymbol y; \kappa_s) \boldsymbol{I}+\frac{1}{\omega^2}\nabla_{\boldsymbol x}\nabla^\top_{\boldsymbol x}\left(g(\boldsymbol x, \boldsymbol y; \kappa_s)-g(\boldsymbol x, \boldsymbol y; \kappa_p)\right),
\]
where $\boldsymbol{I}$ is the $3\times 3$ identity matrix and
\[
g(\boldsymbol x, \boldsymbol y; \kappa)=\frac{1}{4\pi} \frac{e^{{\rm i}\kappa |\boldsymbol x-\boldsymbol y|}}{|\boldsymbol x-\boldsymbol y|}
\]
is the fundamental solution of the three-dimensional Helmholtz equation. The incident wave is chosen as a multiple of the third column of Green tensor, i.e., 
\[
\boldsymbol{u}^{\rm inc}(\boldsymbol x; \boldsymbol y)=10 \boldsymbol{G}(\boldsymbol x, \boldsymbol y; \omega)(:, 3),
\]
where the source point $\boldsymbol y=(0,0,0)^\top$ is taken as the origin, the angular frequency $\omega=\pi$, and the Lam\'{e} parameters $\mu=1, \lambda=2.$ Then it is easy to check that the scattered field is $\boldsymbol{u}=-\boldsymbol{u}^{\rm inc}$.

Denote the a priori error by $e_h=\|\boldsymbol{u}-\boldsymbol{u}_N^h\|_{\boldsymbol{H}^1(\Omega)}$.
Figure \ref{fig2} displays the curves of $\log e_h$ and $\log \epsilon_h$ against $\log \text{DoF}_h$ for the adaptive mesh refinements, where ${\rm DoF}_h$ stands for the degrees of freedom or the number of knowns for the mesh $\mathcal M_h$. The red circle line represents the a posteriori error estimate and the yellow star line stands for the a priori error estimate; the slope of the straight blue line is $-1/3$. It indicates that the meshes and the associated numerical complexity are quasi-optimal,
i.e., $\|\boldsymbol{u}-\boldsymbol{u}_N^h\|_{\boldsymbol{H}^1(\Omega)}
=O\big(\text{DoF}_h^{-1/3}\big)$ holds asymptotically.

\begin{figure}
\centering
\includegraphics[width=0.4\textwidth]{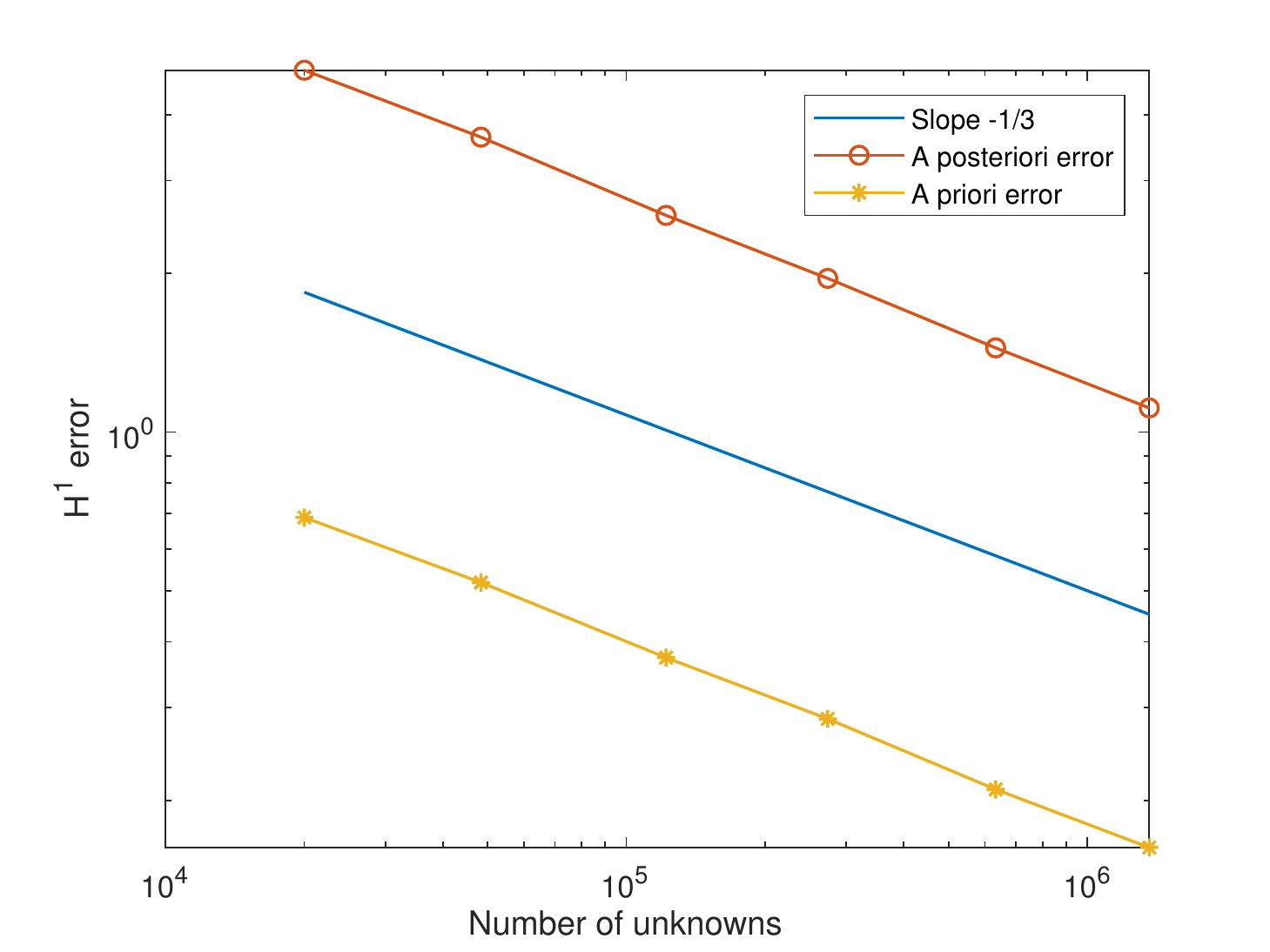}
\caption{Example 1. Quasi-optimality of the a priori and a posteriori error estimates.}
\label{fig2}
\end{figure}

{\em Example 2.} This example demonstrates the effectiveness of the proposed method to handle the problem where the solution has singularity. We consider a more complex geometry: a rectangular U-shaped obstacle, which is shown in the left of Figure 3. In this example, the parameters of the rectangular U-shaped geometry are $L=1, D=0.6, H=W=0.2$. The obstacle is assumed to be illuminated by a compressional plane wave
\[
\boldsymbol{u}^{\rm inc}(\boldsymbol x;\boldsymbol{d})= \boldsymbol{d} e^{{\rm i}\kappa_p \boldsymbol x\cdot\boldsymbol{d}},
\]
where $\boldsymbol{d}=(0,-1, 0)^\top$ is the incident direction, the angular frequency $\omega=\pi$, and the lam\'{e} parameters $\mu=1, \lambda=2$. Thus the compressional wavenumber $\kappa_p=\pi/2.$ The transparent boundary condition
is set on the sphere $\Gamma_R$ with radius $R=1$. Figure \ref{fig4} plots the curves of $\log \epsilon_h$ versus $\log \text{DoF}_h$ for the adaptive mesh refinements. The blue circle line represents for the a posteriori error estimate and the red line is a straight line with slope $-1/3$.  Again, it indicates that the meshes and the associated numerical complexity are quasi-optimal. Figure \ref{fig5} shows the initial mesh (1805 nodal points) and the refined mesh after 2 iterative steps with 13352 nodal points. It is clear to note that the mesh is refined mainly around the corners and the interior part of the U-shaped obstacle, where the solution has singularity, and keeps relatively coarse near the TBC surface, where the solution is smooth. 

\begin{figure}
\centering
\includegraphics[width=0.3\textwidth]{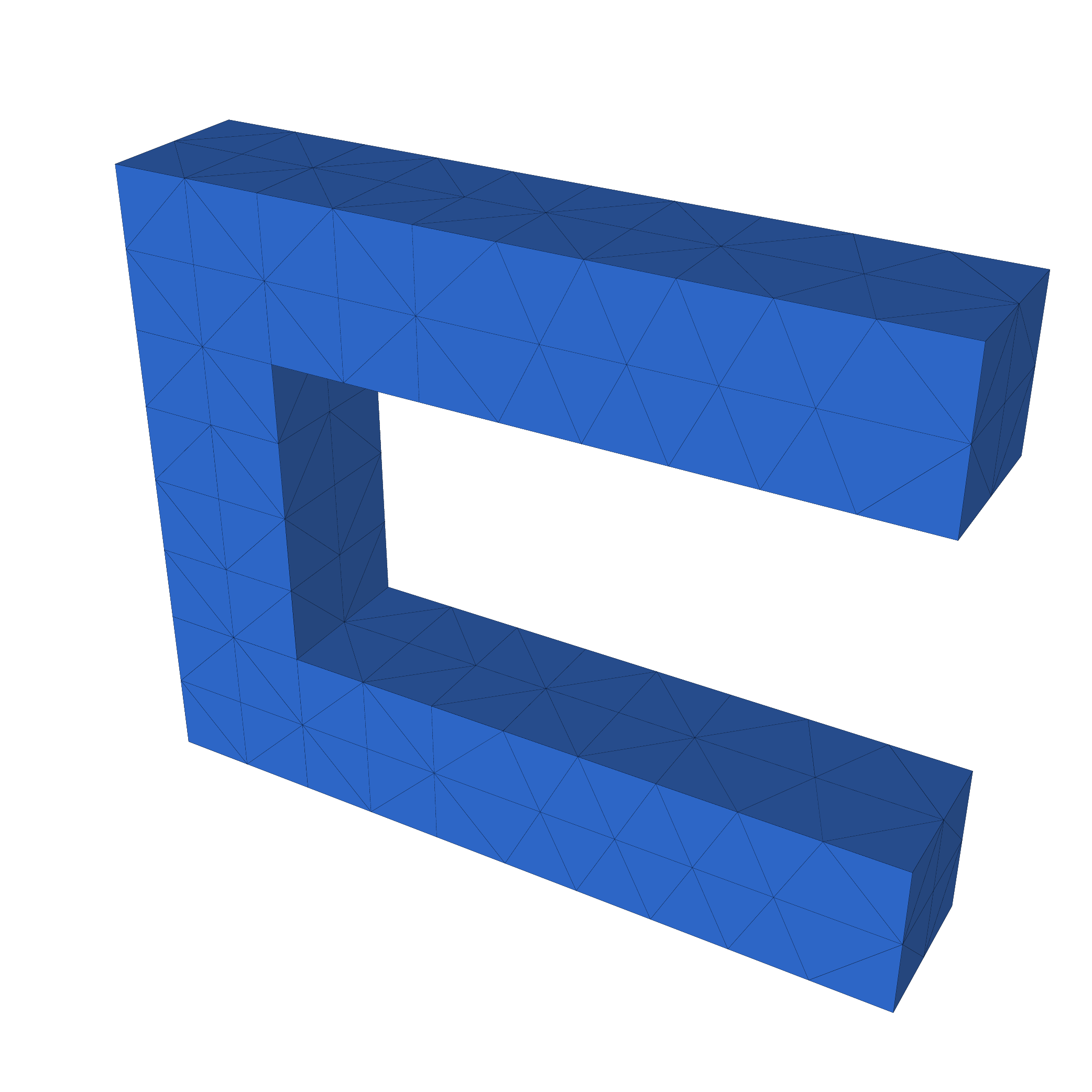}
\includegraphics[width=0.4\textwidth]{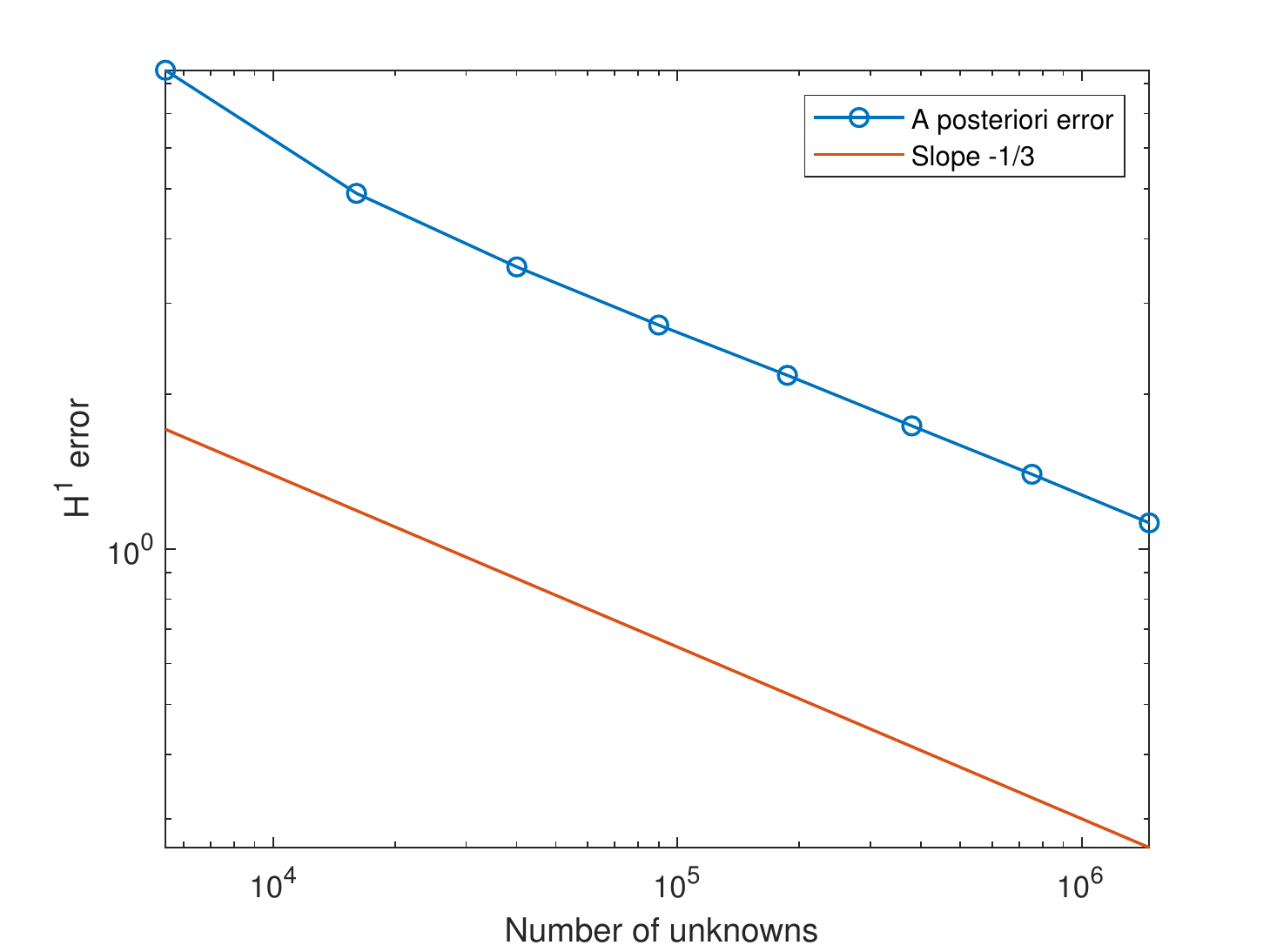}
\caption{Example 2: Quasi-optimality of the a posteriori error estimates.}
\label{fig4}
\end{figure}

\begin{figure}
\centering
\includegraphics[width=0.35\textwidth]{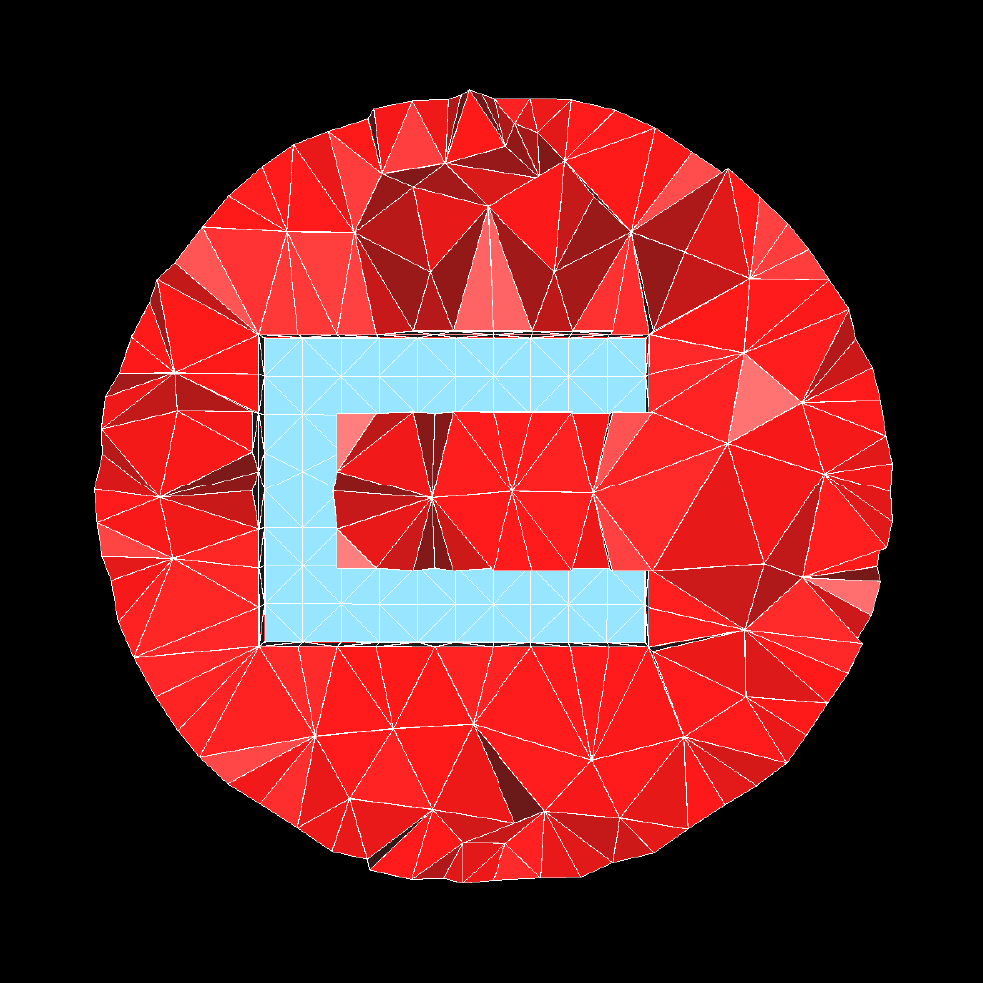}
\includegraphics[width=0.35\textwidth]{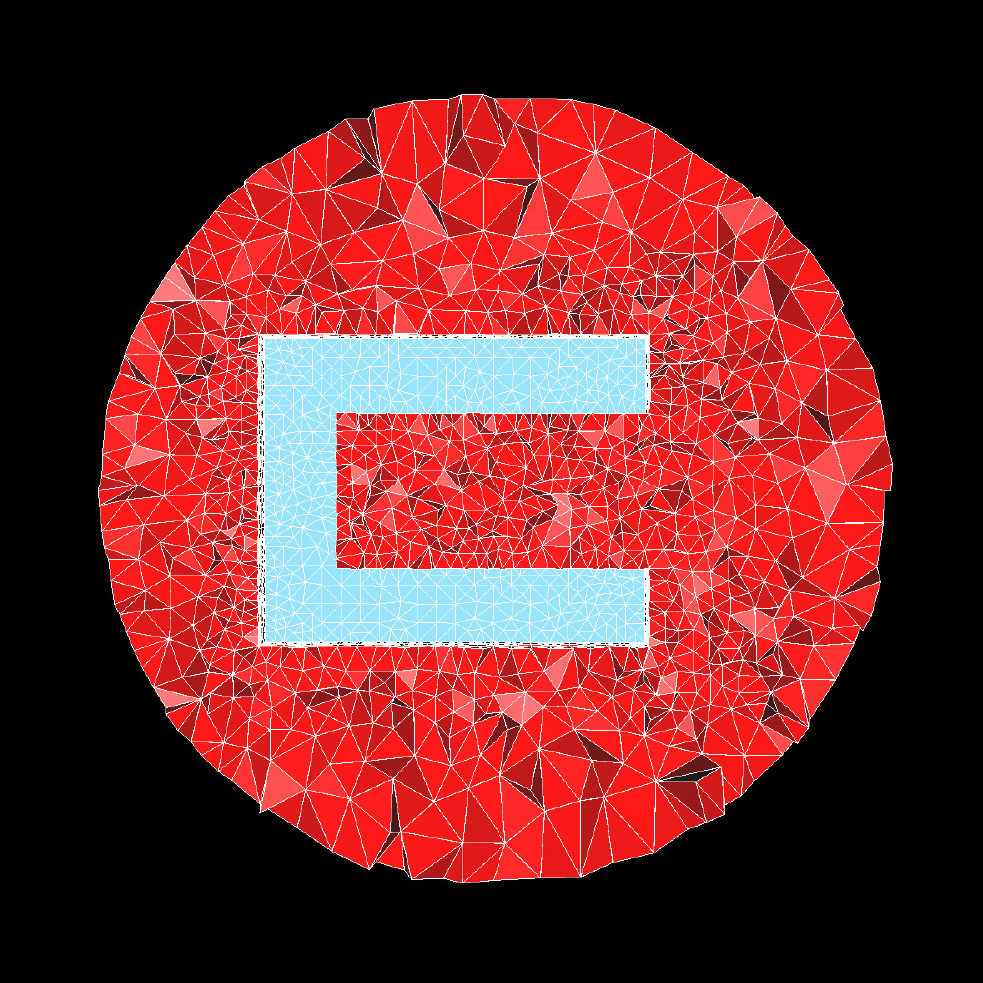}
\caption{Example 2: Cross sections of the initial mesh (left) and refined mesh (right).}
\label{fig5}
\end{figure}

\section{Conclusion}\label{Section_c}

In this paper, we have presented an adaptive finite element DtN method for the elastic obstacle scattering problem in three dimensions. Based on the Helmholtz decomposition, a new duality argument is developed to obtain the a posteriori error estimate. It not only takes into account of the finite element discretization error but also includes the truncation error of the DtN operator. We show that the truncation error decays exponentially with respect to the truncation parameter. The a posteriori error estimate for the solution of the discrete problem serves as a basis for the adaptive finite element approximation. Numerical results show that the proposed method is accurate and effective. 

\appendix

\section{Basis functions}\label{appendixbasis}

The spherical coordinates $(\rho,\theta,\varphi)$ are related to the Cartesian coordinates $\boldsymbol{x}=(x_1,x_2,x_3)$ by
$x_1=\rho\sin\theta\cos\varphi$, $x_2=\rho\sin\theta\sin\varphi$, $x_3=\rho\cos\theta$,  with the local orthonormal basis $\{\boldsymbol e_{\rho}, \boldsymbol e_{\theta}, \boldsymbol e_{\varphi}\}$:
\begin{align*}
\boldsymbol e_{\rho}&=(\sin\theta\cos\varphi,\sin\theta\sin\varphi,\cos\theta)^\top,\\
\boldsymbol e_{\theta}&=(\cos\theta\cos\varphi,\cos\theta\sin\varphi,-\sin\theta)^\top,\\
\boldsymbol e_{\varphi}&=(-\sin\varphi,\cos\varphi,0)^\top,
\end{align*}
where $\theta\in [0,\pi]$ and $\varphi\in [0,2\pi]$ are the Euler angles.

Denote by $\{Y_n^m(\theta, \varphi): |m|\leq n, n=0, 1, 2, \dots\}$ the orthonormal sequence of spherical harmonics of order $n$ on the unit sphere $\mathbb S^2=\{\boldsymbol x\in\mathbb R^3: |\boldsymbol x|=1\}$. Explicitly, they are given by 
\begin{equation*}
Y_{n}^{m}(\theta,\varphi)
=\sqrt{\frac{(2n+1)(n-|m|)!}{4\pi(n+|m|)!}}P_n^{|m|}(\cos\theta)
e^{{\rm i}m\varphi},
\end{equation*}
where 
\[
P_n^m(t)=(1-t^2)^{\frac{m}{2}}\frac{{\rm d}^m}{{\rm d}t^m}P_n(t),\quad 1\leq t\leq 1, \, 0\leq m\leq n,
\]
are the associated Legendre functions and $P_n$ is the Legendre polynomial of degree $n$. Define a sequence of rescaled spherical harmonics of order $n$:
\begin{equation*}\label{rxy}
X_n^m(\theta,\varphi)=\frac{1}{R}Y_n^m(\theta,\varphi).
\end{equation*}
It is clear to note that $\{X_n^m(\theta,\varphi): |m|\leq n ,~
n=0,1,2,...\}$ forms a complete orthonormal system in $L^2(\Gamma_R)$.

For a smooth scalar function $w$ defined on $\Gamma_R$, let 
\begin{equation*}\label{sgrad}
\bold{\nabla}_\Gamma w=\frac{\partial w}{\partial\theta}\boldsymbol e_{\theta}+\frac{1}{\sin\theta}\frac{\partial w}{\partial\varphi}\boldsymbol e_{\varphi}
\end{equation*}
be the tangential gradient on $\Gamma_R$. It follows from \cite[Theorem 6.25]{ck-98} that 
\begin{equation*}
\boldsymbol U_n^m(\theta,\varphi)= \frac{1}{\sqrt{n(n+1)}}\bold{\nabla}_\Gamma X_n^m(\theta,\varphi),\quad \boldsymbol V_n^m(\theta,\varphi)=\boldsymbol e_{\rho}\times \boldsymbol U_n^m
\end{equation*}
for $m=-n, ..., n, n=1,2,...$ form a complete orthonormal system in $\boldsymbol{L}_t^2(\Gamma_R)=\{\boldsymbol w\in L^2(\Gamma_R)^3: \boldsymbol w\cdot\boldsymbol e_\rho=0\}$. For the convenience of notation, define 
\[
\boldsymbol U_0^0=0, \quad \boldsymbol V_0^0=0.
\]

\section{Some basic identities}\label{appendixoperators}

In this section, we list some basic identities in the spherical coordinates which are frequently used in the paper. The proofs are based on straightforward calculations. 

Let $f(\rho)$ be a smooth function. It can be verified that the gradient operator satisfies 
\begin{equation}\label{gradspherical}
\nabla\left(f(\rho) X_n^m\right)= f'(\rho) X_n^m\boldsymbol e_{\rho}+ f(\rho) \frac{\sqrt{n(n+1)}}{\rho}\boldsymbol U_n^m,
\end{equation}
the curl operator satisfies 
\begin{equation}\label{curlspherical}
\left\{
\begin{aligned}
& \nabla\times\left(f(\rho)\boldsymbol U_n^m\right)=\frac{1}{\rho}\frac{\partial}{\partial\rho}\left(\rho f(\rho)\right) \boldsymbol V_n^m,\\
& \nabla\times\left(f(\rho)\boldsymbol V_n^m\right)=-\frac{1}{\rho}\frac{\partial}{\partial\rho}\left(\rho f(\rho)\right)\boldsymbol U_n^m -\frac{\sqrt{n(n+1)}}{\rho}f(\rho) X_n^m\boldsymbol e_{\rho},\\
& \nabla\times\left(f(\rho) X_n^m \boldsymbol e_{\rho}\right)=-\frac{\sqrt{n(n+1)}}{\rho}f(\rho)\boldsymbol V_n^m,
\end{aligned}
\right. 
\end{equation}
the divergence operator satisfies
\begin{equation}\label{divspherical}
\left\{
\begin{aligned}
&\nabla\cdot\left(f(\rho)\boldsymbol U_n^{m}\right)=-f(\rho)\sqrt{(n+1)n}\,\frac{1}{\rho}X_{n}^{m},\\
&\nabla\cdot\left(f(\rho)\boldsymbol V_n^m\right)=0,\\
&\nabla\cdot\left(f(\rho)X_n^{m}\boldsymbol e_{\rho}\right)=\frac{1}{\rho^2}\frac{\partial}{\partial \rho}\left(\rho^2 f(\rho)\right)X_n^m,
\end{aligned}
\right.
\end{equation}
the Laplacian operator satisfies
\begin{align}\label{lapspherical}
\Delta f(\rho) &= \sum\limits_{n\in\mathbb{N}}\sum\limits_{|m|\leq n}\frac{1}{\rho^2}\frac{\partial}{\partial \rho}
\left(\rho^2 \frac{\partial}{\partial \rho} f_n^m(\rho)\right)X_n^m
+\frac{1}{\rho^2} f_n^m(\rho)\Delta_\Gamma X_n^m \notag\\
&= \sum\limits_{n\in\mathbb{N}}\sum\limits_{|m|\leq n}\left[\frac{2}{\rho}f_n^{m'}(\rho)
+f_n^{m''}(\rho)-\frac{n(n+1)}{\rho^2} f_n^m(\rho)\right]X_n^m,
\end{align}
and the double curl operator satisfies
\begin{equation}\label{curlcurlspherical}
\left\{
\begin{aligned}
& \nabla\times\left(\nabla\times\left(f(\rho)\boldsymbol U_n^m\right)\right)= -\frac{1}{\rho}\frac{\partial^2}{\partial \rho^2}\left(\rho f(\rho)\right)\boldsymbol U_n^m -\frac{\sqrt{(n(n+1))}}{\rho^2}\frac{\partial}{\partial\rho}\left(\rho f(\rho)\right)X_n^m \boldsymbol e_{\rho},\\
& \nabla\times\left(\nabla\times\left(f(\rho) \boldsymbol V_n^m\right)\right)= \left[-\frac{1}{\rho}\frac{\partial^2}{\partial \rho^2}\left(\rho f(\rho)\right)+\frac{n(n+1)}{\rho^2} f(\rho)\right]\boldsymbol V_n^m, \\
& \nabla\times\left(\nabla\times\left(f(\rho) X_n^m \boldsymbol e_{\rho}\right)\right)= \frac{\sqrt{n(n+1)}}{\rho}\frac{\partial}{\partial\rho} f(\rho) \boldsymbol U_n^m  +\frac{n(n+1)}{\rho^2} f(\rho) X_n^m \boldsymbol e_{\rho},
\end{aligned}
\right.
\end{equation}
where $\Delta_\Gamma$ is the Laplace--Beltrami operator on $\Gamma_R$ and is defined by 
\[
 \Delta_\Gamma=\frac{1}{\sin\theta}\frac{\partial}{\partial\theta}\left(\sin\theta\frac{\partial}{\partial\theta}\right)+\frac{1}{\sin^2\theta}\frac{\partial^2}{\partial\varphi}. 
\]

\section{Function spaces}\label{appendixfunction}

Denote by $L^2(\Omega)$ the square integrable functions on $\Omega$. Let $\boldsymbol L^2(\Omega)=L^2(\Omega)^3$ be equipped with the inner product and norm given as follows: 
\[
 (\boldsymbol{u}, \boldsymbol{v})=\int_\Omega
    \boldsymbol{u}\cdot\bar{\boldsymbol{v}}\,{\rm d}\boldsymbol{x},\quad
 \|\boldsymbol{u}\|_{\boldsymbol L^2(\Omega)}=(\boldsymbol{u},
\boldsymbol{u})^{1/2}.
\]
Let $H^1(\Omega)$ the standard Sobolev space with the norm given by
\[
 \|u\|_{H^1(\Omega)}=\left(\int_\Omega
|u(\boldsymbol{x})|^2+|\nabla u(\boldsymbol x)|^2\,{\rm
d}\boldsymbol{x}\right)^{1/2}. 
\]
Define $\boldsymbol H^1_{\partial D}(\Omega)=H^1_{\partial D}(\Omega)^3$, where $H^1_{\partial D}(\Omega):=\{u\in
H^1(\Omega): u=0~\text{on}~\partial D\}$. 

Denote by $H^s(\Gamma_R)$ the standard trace functional space which is equipped with the norm 
\[
\|u\|_{H^s(\Gamma_R)}=\left(\sum_{n=0}^\infty\sum_{m=-n}^n (1+n(n+1))^s |u_n^m|^2\right)^{1/2}, \quad u(R, \theta, \varphi)=\sum_{n=0}^\infty\sum_{m=-n}^n u_n^m X_n^m(\theta, \varphi). 
\]
Let $\boldsymbol H^s(\Gamma_R)=H^s(\Gamma_R)^3$ which is equipped with the norm 
\[
\|\boldsymbol u\|_{\boldsymbol H^s(\Gamma_R)}=\left(\sum_{n=0}^\infty\sum_{m=-n}^n (1+n(n+1))^s
|\boldsymbol u_n^m|^2\right)^{1/2},
\]
where $\boldsymbol u_n^m=(u_{1n}^m, u_{2n}^m, u_{3n}^m)$ and 
\[
\boldsymbol u(R, \theta, \varphi)=\sum_{n=0}^\infty\sum_{m=-n}^n u_{1n}^m\boldsymbol U_n^m(\theta, \varphi)+u_{2n}^m\boldsymbol V_n^m(\theta, \varphi)+u_{3n}^m X_n^m(\theta, \varphi)\boldsymbol e_{\rho}. 
\]
It can be verified that $\boldsymbol H^{-s}(\Gamma_R)$ is the dual space of $\boldsymbol H^s(\Gamma_R)$ with respect to the inner product
\[
\langle\boldsymbol{u}, \boldsymbol{v}\rangle_{\Gamma_R}=\int_{\Gamma_R} \boldsymbol{u}\cdot\bar{\boldsymbol{v}}{\rm
d}s=\sum_{n=0}^\infty\sum_{m=-n}^n u_{1n}^m\bar{v}_{1n}^m +u_{2n}^m\bar{v}_{2n}^m+u_{3n}^m \bar{v}_{3n}^m,
\]
where
\[
\boldsymbol v(R, \theta, \varphi)=\sum_{n=0}^\infty\sum_{m=-n}^n v_{1n}^m\boldsymbol U_n^m(\theta, \varphi)+v_{2n}^m\boldsymbol V_n^m(\theta, \varphi)+v_{3n}^m X_n^m(\theta, \varphi) \boldsymbol e_{\rho}.
\]

\section{The DtN operator}\label{appendixDtN}

The DtN operator $\mathscr{T}$ defined in \eqref{section2_TBC} was introduced in \cite{LY-ipi}.  For the self-contained purpose, we summary the results here.

Let $\boldsymbol u$ admit the Fourier expansion
\[
\boldsymbol{u}(\rho, \theta, \varphi)=\sum\limits_{n=0}^{\infty}\sum\limits_{m=-n}^n u_{1n}^m(\rho) \boldsymbol U_n^m(\theta, \varphi)+u_{2n}^m(\rho)\boldsymbol V_n^m(\theta, \varphi)+u_{3n}^m(\rho) X_n^m(\theta, \varphi)\boldsymbol e_{\rho}.
\]
Consider the Helmholtz decomposition 
\[
\boldsymbol{u}=\nabla\phi+\nabla\times\boldsymbol{\psi},\quad \nabla\cdot\boldsymbol{\psi}=0,
\]
where the potential functions $\phi$ and $\boldsymbol\psi$ have the Fourier expansions
\begin{align}
\phi(\rho, \theta, \varphi)&=\sum\limits_{n=0}^{\infty}\sum\limits_{m=-n}^n \frac{h_n^{(1)}(\kappa_p \rho)}{h_n^{(1)}(\kappa_p R)}\phi_n^m(R) X_n^m(\theta, \varphi),\label{AppDtN_phi}\\
\boldsymbol{\psi}(\rho, \theta, \varphi) &= \sum\limits_{n=0}^{\infty}\sum\limits_{m=-n}^n\frac{1}{\sqrt{n(n+1)}}\frac{R}{\rho}\frac{h_n^{(1)}(\kappa_s \rho)+\kappa_s \rho h_n^{(1)'}(\kappa_s \rho)}{h_n^{(1)}(\kappa_s R)}\psi_{3n}^m(R)\boldsymbol U_n^m(\theta, \varphi) \notag\\
&\quad -\frac{h_n^{(1)}(\kappa_s \rho)}{h_n^{(1)}(\kappa_s R)}\psi_{2n}^m(R)\boldsymbol V_n^m(\theta, \varphi)
+\left(\frac{R}{\rho}\frac{h_n^{(1)}(\kappa_s \rho)}{h_n^{(1)}(\kappa_s R)}\right)\psi_{3n}^m(R) 
X_n^m(\theta, \varphi)\boldsymbol e_{\rho}. \label{AppDtN_psi}
\end{align}
Here $h_n^{(1)}$ is the spherical Hankel function of the first kind with order $n$. 

Substituting \eqref{AppDtN_phi} and \eqref{AppDtN_psi} into the Helmholtz decomposition yields 
\begin{align}
	\begin{bmatrix}
		u_{1n}^m(R) \\ u_{2n}^m(R) \\ u_{3n}^m(R)
	\end{bmatrix}&=\frac{1}{R}
	\begin{bmatrix}
		\sqrt{n(n+1)} & -1-z_n^{(1)}(\kappa_s R) & 0\\
		0 & 0 & -\frac{\kappa_s^2 R^2}{\sqrt{n(n+1)}}\\
		z_n^{(1)}(\kappa_p R) & -\sqrt{n(n+1)} & 0
	\end{bmatrix}
	\begin{bmatrix}
		\phi_{n}^m(R) \\ \psi_{2n}^m(R) \\ \psi_{3n}^m(R)
	\end{bmatrix} \notag\\
	&= K_n(R)\begin{bmatrix}
		\phi_{n}^m(R) \\ \psi_{2n}^m(R) \\ \psi_{3n}^m(R)
	\end{bmatrix},  \label{DtN_Kn}
\end{align}
where $z_n^{(1)}(t)=t h_n^{(1)'}(t)/h_n^{(1)}(t)$. It follows from a simple calculation that the inverse of $K_n(R)$ is 
\begin{equation}\label{DtN_Kninverse}
	K_n^{-1}(R)=
	\frac{R}{\Lambda_n(R)}
	\begin{bmatrix}
		-\sqrt{n(n+1)} & 0 & 1+z_n^{(1)}(\kappa_s R) \\
		-z_n^{(1)}(\kappa_p R) & 0 & \sqrt{n(n+1)}\\
		0 & -\sqrt{n(n+1)} \Lambda_n(R)/(\kappa_s^2 R^2) & 0
	\end{bmatrix},
\end{equation}
where
\begin{equation}\label{DtN_Lambda}
\Lambda_n(R)=z_n^{(1)}(\kappa_p R)\left(1+z_n^{(1)}(\kappa_s R)\right)-n(n+1).
\end{equation}
It is shown in \cite{LY-ipi} that
\[
\Im \Lambda_n(R)=\Re z_n^{(1)}(\kappa_p R)\Im z_n^{(1)}(\kappa_s R)+\left(1+\Re z_n^{(1)}(\kappa_s R)\right)\Im z_n^{(1)}(\kappa_p R)<0,
\]
which guarantees the existence of the inverse for $K_n(R)$.

By the Helmholtz decomposition and \eqref{AppDtN_phi}--\eqref{AppDtN_psi}, the boundary operator $\mathscr{D}$ can be expressed in terms of the potential functions $\phi$ and $\boldsymbol{\psi}$ on $\Gamma_R$; it can be further written in terms of $\boldsymbol{u}$ on $\Gamma_R$ by \eqref{DtN_Kn}--\eqref{DtN_Kninverse}, which deduce the DtN operator $\mathscr{T}$. Explicitly, 
it is shown in \cite{LY-ipi} that the entries of the matrix $M_n(R)$ defined in \eqref{section2_TBC} are
\begin{align*}
& M_{n, 11}(R)=-\frac{\mu}{R}\left(1+\frac{z_n^{(1)}(\kappa_p R)}{\Lambda_n(R)}\kappa_s^2 R^2\right),\quad
M_{n, 13}(R)=\sqrt{n(n+1)}\frac{\mu}{R}\left(1+\frac{\kappa_s^2 R^2}{\Lambda_n(R)}\right), \\
& M_{n, 22}(R)=\frac{\mu}{R}z_n^{(1)}(\kappa_s R),\quad M_{n, 31}(R)=\sqrt{n(n+1)}\frac{\mu}{R}\left[1+\frac{\omega^2 R^2}{\mu\Lambda_n(R)}\right],\quad M_{n, 32}(R)=0,\\
& M_{n, 33}(R)=-\frac{\mu}{R}\left[2+\frac{\omega^2 R^2}{\mu\Lambda_n(R)}\left(1+z_n^{(1)}(\kappa_s R)\right)\right],
\quad M_{n, 12}(R)=M_{n, 21}(R)=M_{n, 23}(R)=0.
\end{align*}

Let $\hat{M}_n=-\frac{1}{2}\left(M_n+M_n^*\right)$, where $M_n^*$ is the adjoint of $M_n$.  It is also shown in \cite{LY-ipi} that for a fixed $R>0$ and a sufficiently large $n$, $\hat{M}_n$ is positive definite and its entries satisfy the estimate
\begin{equation}\label{Mnestimate}
|M_{n, ij}(R)|\lesssim n, \quad i,j=1,2,3.
\end{equation}

Introduce the DtN operators $\mathscr T_1$ and $\mathscr T_2$ for the potentials $\phi$ and $\boldsymbol\psi$ on $\Gamma_R$, respectively. In \cite{bzh-2020}, it is shown that 
\begin{equation}\label{DtN_T1}
\partial_{\rho} \phi=\mathscr{T}_1 \phi, \quad \mathscr{T}_1 \phi=\sum\limits_{n=0}^{\infty}\sum\limits_{m=-n}^n z_n^{(1)}(\kappa_p R) \phi_n^m(R) X_n^m(\theta, \varphi).
\end{equation}
In \cite{bz-2020}, it is shown that 
\begin{eqnarray*}
\left(\nabla\times\boldsymbol{\psi}\right)\times\boldsymbol e_{\rho}=-{\rm i}\kappa_s \mathscr{T}_2 \boldsymbol{\psi}_{\Gamma_R}, \quad\boldsymbol{\psi}_{\Gamma_R}=-\boldsymbol e_{\rho}\times\left(\boldsymbol e_{\rho}\times \boldsymbol{\psi}\right),
\end{eqnarray*}
where 
\begin{equation}\label{DtN_T2}
\mathscr{T}_2 \boldsymbol{\psi}=\sum\limits_{n=0}^{\infty}\sum\limits_{m=-n}^n\frac{{\rm i}\kappa_s R}{1+z_n^{(1)}(\kappa_s R)} \psi_{1n}^m(R)\boldsymbol U_n^m(\theta, \varphi)+\frac{1+z_n^{(1)}(\kappa_s R)}{{\rm i}\kappa_s R} \psi_{2n}^m(R)\boldsymbol V_n^m(\theta, \varphi).
\end{equation}

\section{Bessel functions}

This section is concerned with some properties of the Bessel functions, which are needed in the proof of Theorem \ref{Mainthm}. 

The proof of Lemma \ref{Bessel1} may be found in \cite{RS-15-arma}.

\begin{lemma}\label{Bessel1}
Let $\nu=n-1/2$, where $n$ is an arbitrary integer. For $0<z<\nu$, it holds that
\begin{equation*}\label{Yn1}
Y_{\nu}(z)=-\left(\frac{2}{\pi}\right)^{1/2}\frac{1}{\left(\nu^2-z^2\right)^{1/4}} e^{\eta(z, \nu)} \left[1+r(z, \nu)\right],
\end{equation*}
where
\begin{equation*}\label{eta}
\eta(z, \nu)=\nu\log\left(\frac{\nu}{z}+\left[\left(\frac{\nu}{z}\right)^2-1\right]^{1/2}\right)-\left(\nu^2-z^2\right)^{1/2}
\end{equation*}
and
\[
\left|r(z, \nu)\right|\leq e^{\tilde{g}_2} \tilde{g}_2, \quad g_2=\frac{\nu-z}{\nu^{1/3}},\quad\tilde{g}_2=\frac{2}{3\, g_2^{3/2}}.
\]
\end{lemma}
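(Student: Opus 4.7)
The plan is to derive this as a Liouville--Green (WKB) uniform asymptotic expansion for the modified Bessel differential equation, following the classical framework of Olver for Bessel functions of large order, restricted to the oscillation-free regime $0<z<\nu$ where the equation is exponentially controlled rather than oscillatory.

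First I would reduce Bessel's equation $z^2 y''+zy'+(z^2-\nu^2)y=0$ to Liouville normal form via the substitution $y=w/\sqrt{z}$, obtaining
\[
w''(z) = Q(z)\, w(z), \qquad Q(z)=\frac{\nu^2-\tfrac14}{z^2}-1.
\]
For $0<z<\nu$ (and $\nu=n-\tfrac12\ge \tfrac12$), $Q(z)>0$, so the turning point sits at (approximately) $z=\nu$ and we are on the monotone side. I would then introduce the Liouville variable $\xi(z)=\int_{z}^{\nu} Q(\tau)^{1/2}\,d\tau$. A direct computation using the substitution $z=\nu\sin\phi$ (and identifying the constant of integration via the limit $z\to\nu^{-}$) gives, up to a $O(\nu^{-1})$ correction hidden in the $-\tfrac14$, exactly
\[
\xi(z)=\eta(z,\nu)=\nu\log\!\left(\tfrac{\nu}{z}+\sqrt{(\nu/z)^2-1}\right)-\sqrt{\nu^2-z^2},
\]
and a similar computation shows $Q(z)^{-1/4}$ is equivalent to $z^{1/2}(\nu^2-z^2)^{-1/4}$ to leading order, which together with the factor $1/\sqrt{z}$ in the $y\leftrightarrow w$ substitution reproduces the prefactor $(\nu^2-z^2)^{-1/4}$ in the statement.

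Next I would write the exact solution as a WKB ansatz $w(z)=Q(z)^{-1/4} e^{-\xi(z)}[1+r(z,\nu)]$, substitute into the Liouville equation, and derive a Volterra integral equation
\[
r(z,\nu)=\tfrac12\int_{z}^{\nu}\bigl(1-e^{-2(\xi(z)-\xi(\tau))}\bigr)\,\Psi(\tau)\,[1+r(\tau,\nu)]\,d\tau,
\]
where $\Psi(\tau)=Q(\tau)^{-1/4}\bigl(Q(\tau)^{-1/4}\bigr)''$ is the standard Olver error-control integrand. Iterating this contraction and using the bound $|1-e^{-2(\xi(z)-\xi(\tau))}|\le 1$ (valid because $\xi$ is decreasing on $(0,\nu)$) yields the Gronwall-type estimate $|r(z,\nu)|\le e^{F(z)}F(z)$, where $F(z)=\tfrac12\int_{z}^{\nu}|\Psi(\tau)|\,d\tau$. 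The correct branch corresponding to $Y_{\nu}$ (as opposed to $J_{\nu}$, which is exponentially small here) is pinned down by matching the known normalization of $Y_{\nu}(z)$ as $z\to\nu^{-}$ against the standard Airy-function connection formulas; this fixes the overall sign and the prefactor $-\sqrt{2/\pi}$.

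The main obstacle, and the reason the bound takes the peculiar form $e^{\tilde g_2}\tilde g_2$ with $g_2=(\nu-z)/\nu^{1/3}$, is the explicit evaluation of $F(z)$ as $z$ approaches the turning point. A careful expansion shows $\Psi(\tau)\sim \tfrac{5}{16}(\nu^2-\tau^2)^{-3/2}\cdot\nu$-type terms (with error contributions from the $\tfrac14$ shift), and integration from $z$ to $\nu$ produces the characteristic $\nu^{1/3}$-scaling inherited from the Airy transition regime: one finds $F(z)\le \tfrac{2}{3 g_2^{3/2}}=\tilde g_2$, which is finite precisely when $\nu-z\gg \nu^{1/3}$. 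I would carry out this estimate by splitting $F(z)$ into a dominant turning-point-type integral, evaluated in closed form via the substitution $\tau=\nu\sin\phi$ near $\phi=\pi/2$, and a lower-order remainder that is $O(\nu^{-1})$ smaller. Combining this with the Volterra iteration gives $|r(z,\nu)|\le e^{\tilde g_2}\tilde g_2$ as claimed, completing the proof.
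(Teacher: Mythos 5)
The paper does not actually prove this lemma: it is quoted from \cite{RS-15-arma}, who in turn obtain it from Olver's classical Liouville--Green error theory for Bessel's equation of large order. Your sketch reconstructs precisely that theory, so in spirit you are following the one available route; the question is only whether the sketch closes. Three concrete issues remain. First, a sign inconsistency: with your $\xi(z)=\int_z^{\nu}Q(\tau)^{1/2}\,{\rm d}\tau$ one has $\xi=\eta$ to leading order, and the lemma asserts $Y_\nu\propto e^{+\eta}$ (it is the branch that blows up as $z\to0^+$, matching $Y_\nu(z)\sim-\pi^{-1}\Gamma(\nu)(2/z)^{\nu}$), so the ansatz must read $w=Q^{-1/4}e^{+\xi}[1+r]$; your $e^{-\xi}$ is the recessive, $J_\nu$-type branch. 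Your error-control integral $F(z)=\tfrac12\int_z^{\nu}|\Psi|$ is taken in the direction appropriate to the dominant ($e^{+\eta}$) branch, i.e.\ variation up to the turning point, so the two halves of the argument are currently inconsistent with each other, though the repair is notational.

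Second, the normalization $-\sqrt{2/\pi}$ cannot be ``pinned down as $z\to\nu^-$'': your own bound $\tilde g_2=\tfrac23\bigl(\nu^{1/3}/(\nu-z)\bigr)^{3/2}$ diverges there, so the Liouville--Green form carries no information at the turning point. One must either match in the overlap zone $\nu^{1/3}\ll\nu-z\ll\nu$ against the uniform Airy-type expansion of $Y_\nu$, or, more elementarily, let $z\to0^+$: there $\eta\sim\nu\log(2\nu/z)-\nu$ and $(\nu^2-z^2)^{-1/4}\sim\nu^{-1/2}$, and Stirling's formula turns $-\sqrt{2/\pi}\,\nu^{\nu-1/2}e^{-\nu}(2/z)^{\nu}$ into $-\pi^{-1}\Gamma(\nu)(2/z)^{\nu}$, while $\tilde g_2\to\tfrac{2}{3\nu}$, which fixes the constant. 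Third, the entire quantitative content of the lemma is the inequality $F(z)\le\tfrac{2}{3}g_2^{-3/2}$ with that specific constant and the $\nu^{1/3}$ scaling; you assert it after ``a careful expansion'' but never evaluate the error-control integral, and the $-\tfrac14$ shift in $Q$ (which moves the zero of $Q$ off $z=\nu$) must be absorbed into that integral rather than dismissed as an $O(\nu^{-1})$ correction. As it stands the proposal is a correct skeleton of the standard argument, with the decisive estimate left unproved.
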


As a consequence, the results in Lemma \ref{Bessel1} can be used to estimate $Y_{\nu-1}(z)/Y_{\nu}$.

\begin{lemma}\label{Bessel2}
For a fixed $z\in\mathbb R^+$ and sufficiently large $\nu$, it holds that
\[
\frac{z}{2\nu}-\frac{1}{6}\frac{z}{\nu^2}+O\left(\frac{1}{n^3}\right)\leq \frac{Y_{\nu-1}(z)}{Y_{\nu}(z)}\leq \frac{z}{2\nu}+\frac{7}{6}\frac{z}{\nu^2}+O\left(\frac{1}{\nu^3}\right).
\]
\end{lemma}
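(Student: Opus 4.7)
The plan is to apply Lemma \ref{Bessel1} to both $Y_\nu(z)$ and $Y_{\nu-1}(z)$ (replacing $\nu$ by $\nu-1$ in the formula), form the ratio, and then expand. Doing so gives
\[
\frac{Y_{\nu-1}(z)}{Y_\nu(z)} = \left(\frac{\nu^2 - z^2}{(\nu-1)^2 - z^2}\right)^{1/4} e^{\eta(z,\nu-1) - \eta(z,\nu)} \cdot \frac{1 + r(z,\nu-1)}{1 + r(z,\nu)},
\]
which decomposes the ratio into an algebraic prefactor, an exponential of a difference of phase functions, and a remainder ratio. For fixed $z$ and large $\nu$, each factor admits an asymptotic expansion in powers of $1/\nu$; the first two factors produce the leading $z/(2\nu)$ behavior together with a symmetric $O(1/\nu^2)$ correction, while the remainder ratio is what produces the \emph{asymmetric} bounds with coefficients $-1/6$ and $7/6$.

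For the main expansion, I would compute the closed-form derivatives
\[
\partial_\nu \eta(z,\nu) = \log\frac{\nu + \sqrt{\nu^2 - z^2}}{z}, \qquad \partial_\nu^2 \eta(z,\nu) = \frac{1}{\sqrt{\nu^2 - z^2}}, \qquad \partial_\nu^3 \eta(z,\nu) = -\frac{\nu}{(\nu^2 - z^2)^{3/2}},
\]
and apply Taylor's formula $\eta(z,\nu-1)-\eta(z,\nu) = -\partial_\nu\eta + \tfrac12\partial_\nu^2\eta - \tfrac16\partial_\nu^3\eta + O(1/\nu^3)$. Expanding $\partial_\nu\eta = \log(2\nu/z) - z^2/(4\nu^2) + O(1/\nu^4)$ and the higher derivatives in powers of $1/\nu$, I obtain
\[
e^{\eta(z,\nu-1)-\eta(z,\nu)} = \frac{z}{2\nu}\Bigl(1 + \tfrac{1}{2\nu} + O(1/\nu^2)\Bigr).
\]
An analogous expansion $\frac{\nu^2-z^2}{(\nu-1)^2-z^2} = 1 + 2/\nu + 3/\nu^2 + O(1/\nu^3)$ together with $(1+x)^{1/4}$ gives the prefactor. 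Combining the two factors yields
\[
\left(\frac{\nu^2 - z^2}{(\nu-1)^2 - z^2}\right)^{1/4} e^{\eta(z,\nu-1)-\eta(z,\nu)} = \frac{z}{2\nu} + \frac{z}{2\nu^2} + O\!\left(\frac{1}{\nu^3}\right),
\]
so the ``middle'' of the interval in the lemma is essentially $z/(2\nu) + (1/2) z/\nu^2$.

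Finally I would estimate the remainder ratio using the explicit bound from Lemma \ref{Bessel1}. For fixed $z$ and $\nu \to \infty$ one has $g_2 = (\nu-z)/\nu^{1/3} \sim \nu^{2/3}$, whence $\tilde g_2 = 2/(3 g_2^{3/2}) \sim 2/(3\nu)$, so $|r(z,\nu)| \le e^{\tilde g_2}\tilde g_2 = \tfrac{2}{3\nu} + O(1/\nu^2)$. Therefore
\[
\frac{1 + r(z,\nu-1)}{1 + r(z,\nu)} = 1 \pm \frac{4}{3\nu} + O\!\left(\frac{1}{\nu^2}\right),
\]
and multiplying by the leading $z/(2\nu)$ adds a contribution $\pm\frac{2}{3}\frac{z}{\nu^2} = \pm\frac{4}{6}\frac{z}{\nu^2}$ to the coefficient of $z/\nu^2$. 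Combining with the symmetric $\frac12\frac{z}{\nu^2}$ above produces exactly the asymmetric interval $\left[-\tfrac16,\tfrac76\right]$ claimed in the lemma. The main obstacle will be careful bookkeeping of the Taylor and binomial expansions so that all terms of size $1/\nu^2$ are retained and every dropped term genuinely sits in $O(1/\nu^3)$; a secondary subtlety is ensuring the signs of the extremal $r$-contributions align with the upper and lower bounds rather than cancel.
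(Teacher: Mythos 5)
Your proposal is correct and follows essentially the same route as the paper: the same three-factor decomposition from Lemma \ref{Bessel1}, the same bound $|r|\le e^{\tilde g_2}\tilde g_2=\tfrac{2}{3\nu}+O(1/\nu^2)$ producing the asymmetric $1\pm\tfrac{4}{3\nu}$ remainder ratio, and the same combination $\bigl(\tfrac{z}{2\nu}+\tfrac{z}{2\nu^2}\bigr)\bigl(1\pm\tfrac{4}{3\nu}\bigr)$ yielding the coefficients $-\tfrac16$ and $\tfrac76$. The only (immaterial) difference is that you obtain $\eta(z,\nu-1)-\eta(z,\nu)$ by Taylor expansion in $\nu$ using the closed-form derivatives of $\eta$, whereas the paper expands the logarithmic and square-root terms directly; both give $e^{\eta(z,\nu-1)-\eta(z,\nu)}=\tfrac{z}{2\nu}\bigl(1+\tfrac{1}{2\nu}+O(1/\nu^2)\bigr)$.
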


\begin{proof}
It follows from Lemma \ref{Bessel1} that
\begin{equation}\label{Thm1_1}
\frac{Y_{\nu-1}(z)}{Y_{\nu}(z)}=\frac{\left(\nu^2-z^2\right)^{1/4}}{\left[(\nu-1)^2-z^2\right]^{1/4}}\frac{1+R(z, \nu-1)}{1+R(z, \nu)}e^{\eta(z, \nu-1)-\eta(z, \nu)}:=I_1 I_2 I_3.
\end{equation}

A direct calculation shows that 
\begin{equation}\label{I1est}
I_1=\frac{\left(\nu^2-z^2\right)^{1/4}}{\left[(\nu-1)^2-z^2\right]^{1/4}}= 1+\frac{1}{2\nu}+O\left(\frac{1}{\nu^2}\right)
\end{equation}
and
\begin{align*}
& \tilde{g}_2(z, \nu)=\frac{2}{3}\left(\frac{\nu^{1/3}}{\nu-z}\right)^{3/2}=\frac{2}{3\nu}+O\left(\frac{1}{\nu^2}\right),\\
& e^{\tilde{g}_2(z, \nu)} \tilde{g}_2(z, \nu) = \frac{2}{3\nu}+O\left(\frac{1}{\nu^2}\right),\\
& e^{\tilde{g}_2(z, \nu-1)} \tilde{g}_2(z, \nu-1)= \frac{2}{3\nu}+O\left(\frac{1}{\nu^3}\right). 
\end{align*}
Substituting the above estimates into $I_2$, we have 
\begin{align*}
& I_2 = \frac{1+R(z, \nu-1)}{1+R(z, \nu)}\leq \frac{1+\tilde{g}_2(z, \nu-1)e^{\tilde{g}_2(z, \nu-1)}}{1-\tilde{g}_2(z, \nu)e^{\tilde{g}_2(z, \nu)}}=1+\frac{4}{3}\frac{1}{\nu}+O\left(\frac{1}{\nu^2}\right),\\
& I_2 = \frac{1+R(z, \nu-1)}{1+R(z, \nu)}\geq \frac{1-\tilde{g}_2(z, \nu-1)e^{\tilde{g}_2(z, \nu-1)}}{1+\tilde{g}_2(z, \nu)e^{\tilde{g}_2(z, \nu)}}=1-\frac{4}{3\nu}+O\left(\frac{1}{\nu^2}\right),
\end{align*}
which give that 
\begin{equation}\label{I2est}
1-\frac{4}{3\nu}+O\left(\frac{1}{\nu^2}\right)\leq I_2\leq 1+\frac{4}{3\nu}+O\left(\frac{1}{\nu^2}\right). 
\end{equation}

It follows from a simple calculation that
\begin{align*}
&\eta(z, \nu-1)-\eta(z, \nu) =\left(\nu-1\right)\log\left(\frac{\nu-1}{z}+\left[\left(\frac{\nu-1}{z}\right)^2-1\right]^{1/2}\right)\\
& \quad-\nu\log\left(\frac{\nu}{z}+\left[\left(\frac{\nu}{z}\right)^2-1\right]^{1/2}\right)
+\left(\nu^2-z^2\right)^{1/2}-\left((\nu-1)^2-z^2\right)^{1/2}\\
&:= A+B
\end{align*}
It is easy to check that
\begin{equation}\label{estB}
B = \left(\nu^2-z^2\right)^{1/2}-\left((\nu-1)^2-z^2\right)^{1/2}= 1+\frac{z^2}{2\nu^2}+O\left(\frac{1}{\nu^3}\right)
\end{equation}
and
\begin{align*}
&\log\left(\frac{\nu}{z}+\left[\left(\frac{\nu}{z}\right)^2-1\right]^{1/2}\right)= \log\left(\frac{2\nu}{z}\right)-\frac{1}{4}\left(\frac{z}{\nu}\right)^2+O\left(\frac{1}{\nu^3}\right),\\
&\log\left(\frac{\nu-1}{z}+\left[\left(\frac{\nu-1}{z}\right)^2-1\right]^{1/2}\right)=\log\left(\frac{2\nu}{z}\right)-\frac{1}{\nu}-\left(\frac{1}{2}+\frac{z^2}{4}\right)\frac{1}{\nu^2}+O\left(\frac{1}{\nu^3}\right),
\end{align*}
which lead to 
\begin{align}\label{estA}
A &= \left(\nu-1\right)\log\left(\frac{\nu-1}{z}+\left[\left(\frac{\nu-1}{z}\right)^2-1\right]^{1/2}\right)-\nu\log\left(\frac{\nu}{z}+\left[\left(\frac{\nu}{z}\right)^2-1\right]^{1/2}\right) \notag\\
&= -\log\left(\frac{2\nu}{z}\right)-1+\frac{1}{2\nu}+O\left(\frac{1}{\nu^2}\right). 
\end{align}
Combining \eqref{estA} and \eqref{estB}, we obtain 
\begin{equation}\label{I3est}
I_3= \exp\left(\eta(z, \nu-1)-\eta(z, \nu)\right)=\exp\left(A+B\right)=\frac{z}{2\nu}+\frac{z}{4}\frac{1}{\nu^2}+O\left(\frac{1}{\nu^3}\right). 
\end{equation}

Substituting \eqref{I1est} and \eqref{I3est} into \eqref{Thm1_1} gives 
\begin{eqnarray*}
\frac{Y_{\nu-1}(z)}{Y_{\nu}(z)} = I_1 I_2 I_3 = \left[\frac{z}{2\nu}+\frac{z}{2}\frac{1}{\nu^2}++O\left(\frac{1}{\nu^3}\right)\right] I_2.
\end{eqnarray*}
We have from \eqref{I2est} that 
\begin{align*}
\frac{Y_{\nu-1}(z)}{Y_{\nu}(z)} &\leq\left[\frac{z}{2\nu}+\frac{z}{2}\frac{1}{\nu^2}++O\left(\frac{1}{\nu^3}\right)\right]\left[1+\frac{4}{3\nu}+O\left(\frac{1}{\nu^2}\right)\right]\\
&= \frac{z}{2\nu}+\frac{7z}{6}\frac{1}{\nu^2}+O\left(\frac{1}{\nu^3}\right)
\end{align*}
and
\begin{align*}
\frac{Y_{\nu-1}(z)}{Y_{\nu}(z)} &\geq \left[\frac{z}{2\nu}+\frac{z}{2}\frac{1}{\nu^2}++O\left(\frac{1}{\nu^3}\right)\right]
\left[1-\frac{4}{3\nu} +O\left(\frac{1}{\nu^2}\right)\right]\\
&= \frac{z}{2\nu}-\frac{z}{6}\frac{1}{\nu^2}+O\left(\frac{1}{\nu^3}\right),
\end{align*}
which completes the proof.
\end{proof}

\begin{lemma}\label{Bessel3}
Let $G_{\nu}(z)=\frac{z Y'_{\nu}(z)}{Y_{\nu}(z)}$. The following estimate holds for sufficiently large $\nu$:
\[
-\nu+\frac{z^2}{2\nu}-\frac{1}{6}\frac{z^2}{\nu^2}+O\left(\frac{1}{\nu^3}\right)\leq G_{\nu}(z)\leq -\nu+\frac{z^2}{2\nu}+\frac{7}{6}\frac{z^2}{\nu^2}+O\left(\frac{1}{\nu^3}\right)<0. 
\]
\end{lemma}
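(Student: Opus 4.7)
The plan is to reduce the statement to Lemma \ref{Bessel2} via the standard Bessel recurrence relation
\[
Y'_\nu(z) = Y_{\nu-1}(z) - \frac{\nu}{z} Y_\nu(z),
\]
which rearranges to give directly
\[
G_\nu(z) = \frac{z Y'_\nu(z)}{Y_\nu(z)} = z\cdot\frac{Y_{\nu-1}(z)}{Y_\nu(z)} - \nu.
\]
This identity is the only piece of structural information I need; everything else is bookkeeping.

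Next I would invoke Lemma \ref{Bessel2}, which for sufficiently large $\nu$ gives the two-sided bound
\[
\frac{z}{2\nu} - \frac{1}{6}\frac{z}{\nu^2} + O\!\left(\frac{1}{\nu^3}\right) \leq \frac{Y_{\nu-1}(z)}{Y_\nu(z)} \leq \frac{z}{2\nu} + \frac{7}{6}\frac{z}{\nu^2} + O\!\left(\frac{1}{\nu^3}\right).
\]
Multiplying through by $z>0$ (which preserves the direction of the inequalities) and subtracting $\nu$ from all three terms produces
\[
-\nu + \frac{z^2}{2\nu} - \frac{1}{6}\frac{z^2}{\nu^2} + O\!\left(\frac{1}{\nu^3}\right) \leq G_\nu(z) \leq -\nu + \frac{z^2}{2\nu} + \frac{7}{6}\frac{z^2}{\nu^2} + O\!\left(\frac{1}{\nu^3}\right),
\]
which is exactly the asserted two-sided estimate.

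For the final strict inequality $G_\nu(z)<0$, I would observe that in the upper bound the leading term $-\nu$ dominates the remaining terms $\frac{z^2}{2\nu} + \frac{7z^2}{6\nu^2} + O(\nu^{-3})$, which are of order $\nu^{-1}$ for fixed $z$. Hence for all sufficiently large $\nu$ the right-hand side is strictly negative, giving $G_\nu(z)<0$.

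There is essentially no obstacle here: the entire content lies in Lemma \ref{Bessel2}, which has already been established, together with a one-line application of the recurrence for $Y'_\nu$. The only care needed is verifying that the $O(\nu^{-3})$ error terms from Lemma \ref{Bessel2} survive multiplication by the fixed constant $z$ as $O(\nu^{-3})$ (they do, since $z$ is fixed), and that $\nu$ is taken large enough so both the asymptotic expansion from Lemma \ref{Bessel2} is valid (i.e., $\nu > z$, which is the hypothesis underlying Lemma \ref{Bessel1}) and the dominant term $-\nu$ overcomes the positive correction.
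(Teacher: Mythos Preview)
Your proposal is correct and follows essentially the same approach as the paper: both use the recurrence $Y'_\nu(z)=Y_{\nu-1}(z)-\tfrac{\nu}{z}Y_\nu(z)$ to write $G_\nu(z)=z\,\frac{Y_{\nu-1}(z)}{Y_\nu(z)}-\nu$ and then feed in the two-sided bound from Lemma~\ref{Bessel2}. Your explicit remark on why the upper bound is strictly negative for large $\nu$ is a small addition the paper leaves implicit.
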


\begin{proof}
Recall the following recurrence relation for the Bessel function (cf. \cite{W-95}):
\[
Y'_{\nu}(z)=Y_{\nu-1}(z)-\frac{\nu}{z}Y_{\nu}(z),
\]
which gives 
\begin{eqnarray*}
G_{\nu}(z) = \frac{z Y'_{\nu}(z)}{Y_{\nu}(z)}= z\left[\frac{Y_{\nu-1}(z)}{Y_{\nu}(z)}-\frac{\nu}{z}\right].
\end{eqnarray*}
By Theorem \ref{Bessel2}, we obtain for sufficiently large $\nu$ that 
\begin{eqnarray*}
G_{\nu}(z) \leq  z\left[ \frac{z}{2\nu}+\frac{7}{6}\frac{z}{\nu^2}+O\left(\frac{1}{\nu^3}\right)-\frac{\nu}{z}\right]
=-\nu+\frac{z^2}{2\nu}+\frac{7}{6}\frac{z^2}{\nu^2}+O\left(\frac{1}{\nu^3}\right)
\end{eqnarray*}
and
\begin{eqnarray*}
G_{\nu}(z) \geq  z \left[\frac{z}{2\nu}-\frac{z}{6}\frac{1}{\nu^2}+O\left(\frac{1}{\nu^3}\right)-\frac{\nu}{z}\right]
=-\nu+\frac{z^2}{2\nu}-\frac{1}{6}\frac{z^2}{\nu^2}+O\left(\frac{1}{\nu^3}\right),
\end{eqnarray*}
which completes the proof.
\end{proof}

\begin{lemma}\label{Bessel4}
Assume $R'<R$ and $\kappa_p<\kappa_s$. Then for sufficiently large positive $\nu$, it holds that
\[
\left|\frac{Y_{\nu}(\kappa_p R)}{Y_{\nu}(\kappa _p R')}-\frac{Y_{\nu}(\kappa_s R)}{Y_{\nu}(\kappa_s R')}\right|
\leq \frac{7}{3}\frac{\kappa_s(\kappa_s-\kappa_p) }{\nu}R\left(R-R'\right)\left(\frac{R'}{R}\right)^\nu.
\]
\end{lemma}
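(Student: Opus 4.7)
The plan is to treat $\kappa$ as the free variable. Define
\[
F(\kappa)=\frac{Y_\nu(\kappa R)}{Y_\nu(\kappa R')},\qquad \kappa\in[\kappa_p,\kappa_s],
\]
and rewrite the quantity to be estimated as
\[
\frac{Y_\nu(\kappa_p R)}{Y_\nu(\kappa_p R')}-\frac{Y_\nu(\kappa_s R)}{Y_\nu(\kappa_s R')}=-\int_{\kappa_p}^{\kappa_s}F'(\kappa)\,{\rm d}\kappa.
\]
Differentiating the ratio and using the definition $G_\nu(z)=zY_\nu'(z)/Y_\nu(z)$ from Lemma~\ref{Bessel3}, a short computation gives the clean identity
\[
F'(\kappa)=\frac{1}{\kappa}\,F(\kappa)\bigl[G_\nu(\kappa R)-G_\nu(\kappa R')\bigr].
\]
Thus everything reduces to two separate estimates: an upper bound on $|F(\kappa)|$ and an upper bound on $|G_\nu(\kappa R)-G_\nu(\kappa R')|$.

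For the second factor, I would invoke the sharp asymptotic of Lemma~\ref{Bessel3}, subtracting the upper bound of $G_\nu(\kappa R)$ from the lower bound of $G_\nu(\kappa R')$ to produce
\[
G_\nu(\kappa R)-G_\nu(\kappa R')\le \frac{\kappa^{2}(R^{2}-R'^{2})}{2\nu}\Bigl(1+O(1/\nu)\Bigr),
\]
and factor $R^{2}-R'^{2}=(R+R')(R-R')\le 2R(R-R')$. For $|F(\kappa)|$ I would apply Lemma~\ref{Bessel1} to both numerator and denominator: the $(\nu^{2}-z^{2})^{-1/4}$ prefactors and the error factors $1+r(\cdot,\nu)$ contribute only a $1+O(1/\nu)$ multiplier, while the key exponential part yields
\[
\exp\bigl(\eta(\kappa R,\nu)-\eta(\kappa R',\nu)\bigr)=\Bigl(\frac{R'}{R}\Bigr)^{\!\nu}\bigl(1+O(1/\nu)\bigr),
\]
after a Taylor expansion of $\eta(\cdot,\nu)$ that replaces the $\log\bigl(\nu/z+\sqrt{(\nu/z)^2-1}\bigr)$ terms by $\log(2\nu/z)+O(1/\nu^{2})$ and the square-root differences by bounded $O(1/\nu)$ quantities.

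Combining the two estimates gives, for sufficiently large $\nu$,
\[
|F'(\kappa)|\le \frac{\kappa R(R-R')}{\nu}\Bigl(\frac{R'}{R}\Bigr)^{\!\nu}\bigl(1+O(1/\nu)\bigr).
\]
Integrating in $\kappa\in[\kappa_p,\kappa_s]$ and using $\int_{\kappa_p}^{\kappa_s}\kappa\,{\rm d}\kappa=(\kappa_s+\kappa_p)(\kappa_s-\kappa_p)/2\le \kappa_s(\kappa_s-\kappa_p)$ would then yield the required inequality, where the explicit prefactor $7/3$ is obtained by absorbing the $1+O(1/\nu)$ factor together with the constant $7/6$ appearing in the upper bound of Lemma~\ref{Bessel3} (doubled through the substitution $R+R'\le 2R$).

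The main obstacle is book-keeping the explicit constant $7/3$: both Lemmas~\ref{Bessel1} and~\ref{Bessel3} contribute lower-order $O(1/\nu)$ corrections, and one must verify that for sufficiently large $\nu$ the product of all correction factors (from the prefactor ratio in Lemma~\ref{Bessel1}, the remainder $r(z,\nu)$, the exponential correction in $\eta$, and the $7z^{2}/(6\nu^{2})$ upper tail in Lemma~\ref{Bessel3}) is bounded by $7/3$ rather than by a larger constant. This is a careful but routine estimation; no new ideas beyond the already-established asymptotics are needed.
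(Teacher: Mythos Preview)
Your proposal is correct and follows essentially the same route as the paper: both treat the wavenumber $\kappa$ as the free variable, differentiate the ratio $F(\kappa)=Y_\nu(\kappa R)/Y_\nu(\kappa R')$, and express $F'$ through $G_\nu(z)=zY_\nu'(z)/Y_\nu(z)$ together with the asymptotic $Y_\nu(\kappa R)/Y_\nu(\kappa R')\sim (R'/R)^\nu$.

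The only noteworthy difference is in how the increment $G_\nu(\kappa R)-G_\nu(\kappa R')$ is controlled. The paper applies the mean value theorem a second time and computes $G_\nu'(z)$ explicitly from the Bessel differential equation, obtaining $|G_\nu'(z)|\le \tfrac{7}{3}\,z/\nu+O(1/\nu^{2})$; this is where the constant $7/3$ actually originates. You instead subtract the two-sided bounds of Lemma~\ref{Bessel3} directly, which yields the sharper leading constant $1$ (not $7/6$ doubled; the $7/6$ in Lemma~\ref{Bessel3} sits in the $O(1/\nu^{2})$ term and does not drive the constant). Your argument still gives the stated inequality because $1+O(1/\nu)<7/3$ for large $\nu$, so the conclusion stands; only your explanation of the constant's provenance should be adjusted.
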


\begin{proof}
Let $F_{\nu}(z)=Y_{\nu}(zR)/Y_{\nu}(z R')$. By the mean value theorem, we have 
	\begin{align}
		F_{\nu}(\kappa_p)-F_{\nu}(\kappa_s) &=F'_{\nu}(\xi)(\kappa_p-\kappa_s) \notag\\
		&= \frac{R Y'_\nu(\xi R) Y_\nu(\xi R')- R' 
			Y_\nu(\xi R) Y'_\nu(\xi R')}{Y_\nu(\xi R')^2}(\kappa_p-\kappa_s)\notag\\
		&= \left[G_{\nu}(\xi R)-G_{\nu}(\xi R')\right]\frac{Y_\nu(\xi R)}{Y_\nu(\xi R')}\frac{\kappa_p-\kappa_s}{\xi}\notag\\
		&= G'_{\nu}(\eta)(R-R')(\kappa_p-\kappa_s)\frac{Y_\nu(\xi R)}{Y_\nu(\xi R')}, \label{Thm3_1}
	\end{align}
where $\eta$ satisfies $ \kappa_p R'<\eta<\kappa_s R $. A simple calculation yields 
\begin{eqnarray*}
G'_{\nu}(z) = \left(\frac{z Y'_{\nu}(z)}{Y_{\nu}(z)}\right)' = \frac{1}{z}\frac{zY'_{\nu}(z)Y_{\nu}(z)+z^2Y''_{\nu}(z)Y_{\nu}(z)- z^2Y'_{\nu}(z)^2}{Y^2_{\nu}(z)}.
\end{eqnarray*}
Using the identity 
\[
z^2 Y''_{\nu}(z)=-zY'_{\nu}(z)-(z^2-\nu^2)Y_{\nu}(z),
\]
we obtain 
\begin{align}\label{Gderiv}
G'_{\nu}(z) &= \frac{1}{z}\frac{zY'_{\nu}(z)Y_{\nu}(z)+\left[-zY'_{\nu}(z)-(z^2-\nu^2)Y_{\nu}(z)\right]Y_{\nu}(z)- z^2Y'_{\nu}(z)^2}{Y^2_{\nu}(z)} \notag\\
&= -\frac{z^2-\nu^2}{z}-\frac{1}{z}G^2_{\nu}(z). 
\end{align}

It is shown in Lemma \ref{Bessel3} that $G_{\nu}(z)$ is always negative for sufficiently large $\nu$. Hence we have 
\begin{eqnarray*}
G^2_{\nu}(z) \leq \left[-\nu+\frac{z^2}{2\nu}-\frac{1}{6}\frac{z^2}{\nu^2}+O\left(\frac{1}{\nu^3}\right)\right]^2
= \nu^2-z^2+\frac{1}{3}\frac{z^2}{\nu}+O\left(\frac{1}{\nu^2}\right)
\end{eqnarray*}
and
\begin{eqnarray*}
G^2_{\nu}(z) \geq \left[ -\nu+\frac{z^2}{2\nu}+\frac{7}{6}\frac{z^2}{\nu^2}+O\left(\frac{1}{n^3}\right)\right]^2
=\nu^2-z^2-\frac{7}{3}\frac{z^2}{\nu}+O\left(\frac{1}{\nu^2}\right). 
\end{eqnarray*}
Substituting the above estimates into \eqref{Gderiv} leads to 
\begin{align*}
G'_{\nu}(z)\leq -z+\frac{\nu^2}{z}-\frac{1}{z}\left[\nu^2-z^2-\frac{7}{3}\frac{z^2}{\nu}+O\left(\frac{1}{\nu^2}\right)\right]=\frac{7}{3}\frac{z}{\nu}+O\left(\frac{1}{\nu^2}\right)
\end{align*}
and
\begin{align*}
G'_{\nu}(z)\geq -z+\frac{\nu^2}{z}-\frac{1}{z}\left[\nu^2-z^2+\frac{1}{3}\frac{z^2}{\nu}+O\left(\frac{1}{\nu^2}\right)\right]
=-\frac{1}{3}\frac{z}{\nu}+O\left(\frac{1}{\nu^2}\right). 
\end{align*}

Combining the above estimates, we get from \eqref{Thm3_1} that 
\begin{align*}
G'_{\nu}(\eta)(R-R')(\kappa_p-\kappa_s)\leq -\frac{1}{3}\frac{\eta}{\nu}(R-R')(\kappa_p-\kappa_s)\leq \frac{1}{3}\frac{\kappa_s R}{\nu}(R-R')(\kappa_s-\kappa_p)
\end{align*}
and
\begin{align*}
G'_{\nu}(\eta)(R-R')(\kappa_p-\kappa_s)\geq \frac{7}{3}\frac{\eta}{\nu}(R-R')(\kappa_p-\kappa_s)
\geq -\frac{7}{3}\frac{\kappa_s R}{\nu}(R-R')(\kappa_s-\kappa_p),
\end{align*}
which shows 
\[
|G'_{\nu}(\eta)(R-R')(\kappa_p-\kappa_s)| \leq \frac{7}{3}\frac{\kappa_s R}{\nu}(R-R')(\kappa_s-\kappa_p).
\]
The proof is completed by noting that
\[
\frac{Y_{\nu}(zR)}{Y_{\nu}(z R')}\sim \left(\frac{R'}{R}\right)^{\nu}.
\]
\end{proof}

\begin{lemma}\label{Bessel5}
	Let $0<\kappa_p<\kappa_s$ and $0<\hat R<R$. For sufficiently large $n$, the
	following estimate holds for $j=1, 2$:
	\begin{equation*}
		\left|\frac{h_{n}^{(j)}(\kappa_p R)}{h_{n}^{(j)}(\kappa_p  R')}
		-\frac{h_{n}^{(j)}(\kappa_s R)}{h_{n}^{(j)}(\kappa_s R')}\right|
		\leq\frac{14}{3}\frac{\kappa_s(\kappa_s-\kappa_p) }{n}R\left(R-R'\right)\left(\frac{R'}{R}\right)^{n+1},
	\end{equation*}
	where $h_n^{(1)}$ and $h_n^{(2)}$ are the spherical Hankel functions of the first and
	second kind with order $n$, respectively.  
\end{lemma}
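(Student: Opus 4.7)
The plan is to reduce Lemma~\ref{Bessel5} to the analogous Bessel-function estimate already proved in Lemma~\ref{Bessel4}, via the standard identification of spherical Hankel functions with half-integer order Hankel functions. The key observation is that the prefactors appearing in the relation between spherical and cylindrical Hankel functions produce precisely the extra $\sqrt{R'/R}$ factor that converts the bound $(R'/R)^\nu$ of Lemma~\ref{Bessel4} into the required $(R'/R)^{n+1}$.

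First, I would invoke the classical identity $h_n^{(j)}(z)=\sqrt{\pi/(2z)}\,H_{n+1/2}^{(j)}(z)$, where $H_\nu^{(j)}=J_\nu+(-1)^{j+1}\mathrm{i}\,Y_\nu$ is the cylindrical Hankel function of order $\nu$. The $\sqrt{\pi/(2\kappa R)}$ and $\sqrt{\pi/(2\kappa R')}$ factors in the numerator and denominator of $h_n^{(j)}(\kappa R)/h_n^{(j)}(\kappa R')$ combine into a common multiplier $\sqrt{R'/R}$ that is independent of $\kappa$ and $j$, giving
\[
\frac{h_n^{(j)}(\kappa R)}{h_n^{(j)}(\kappa R')}=\sqrt{\frac{R'}{R}}\,\frac{H_{n+1/2}^{(j)}(\kappa R)}{H_{n+1/2}^{(j)}(\kappa R')}\qquad\text{for}~\kappa\in\{\kappa_p,\kappa_s\},\ j\in\{1,2\}.
\]

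Next, with $\nu=n+1/2$, I would show that the ratio $H_\nu^{(j)}(\kappa R)/H_\nu^{(j)}(\kappa R')$ agrees with $Y_\nu(\kappa R)/Y_\nu(\kappa R')$ up to a super-exponentially small multiplicative error. Using the large-order asymptotics $J_\nu(z)\sim(2\pi\nu)^{-1/2}(ez/(2\nu))^\nu$ and $Y_\nu(z)\sim-(2/(\pi\nu))^{1/2}(ez/(2\nu))^{-\nu}$ for fixed $z$ as $\nu\to\infty$, the ratio $J_\nu(\kappa R)/Y_\nu(\kappa R)$ decays like $(e\kappa R/(2\nu))^{2\nu}$. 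Factoring out $(-1)^{j+1}\mathrm{i}\,Y_\nu$ from $H_\nu^{(j)}$ therefore yields
\[
\frac{H_\nu^{(j)}(\kappa R)}{H_\nu^{(j)}(\kappa R')}=\frac{Y_\nu(\kappa R)}{Y_\nu(\kappa R')}\bigl(1+\varepsilon_n(\kappa)\bigr),\qquad|\varepsilon_n(\kappa)|\lesssim(e\kappa R/(2\nu))^{2\nu}.
\]
Applying Lemma~\ref{Bessel4} to the $Y_\nu$ ratios and using $1/\nu=1/(n+1/2)\leq 1/n$ together with $\sqrt{R'/R}\,(R'/R)^{n+1/2}=(R'/R)^{n+1}$ then delivers the stated bound with constant $7/3$, and the factor of $2$ loss is precisely the slack needed to absorb the $\varepsilon_n$ corrections for sufficiently large $n$.

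The main obstacle is making the $\varepsilon_n$ correction quantitatively negligible in a self-contained way. A clean route is to write the full difference as
\[
\frac{H_\nu^{(j)}(\kappa_p R)}{H_\nu^{(j)}(\kappa_p R')}-\frac{H_\nu^{(j)}(\kappa_s R)}{H_\nu^{(j)}(\kappa_s R')}=\left(\frac{Y_\nu(\kappa_p R)}{Y_\nu(\kappa_p R')}-\frac{Y_\nu(\kappa_s R)}{Y_\nu(\kappa_s R')}\right)+\mathcal{E}_n,
\]
and to estimate $|\mathcal{E}_n|$ by $\max_\kappa|\varepsilon_n(\kappa)|\cdot\max_\kappa|Y_\nu(\kappa R)/Y_\nu(\kappa R')|$, which is dominated by $(R'/R)^\nu$ times a super-exponentially small factor in $n$. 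For $n$ large enough, this secondary term is bounded by $\tfrac{7}{3}\kappa_s(\kappa_s-\kappa_p)R(R-R')(R'/R)^{n+1}/n$, so the final constant $14/3$ is justified after multiplication by $\sqrt{R'/R}$ and summation.
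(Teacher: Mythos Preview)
Your proposal is correct and follows essentially the same route as the paper: both arguments pass from spherical Hankel functions to half-integer order cylindrical functions (picking up the $\sqrt{R'/R}$ factor), split off the dominant $Y_\nu$-ratio to which Lemma~\ref{Bessel4} applies, and show that the remaining $J_\nu/Y_\nu$ contribution is super-exponentially small and hence absorbed by doubling the constant from $7/3$ to $14/3$. The only cosmetic differences are that the paper first reduces to $j=1$ by complex conjugation and phrases the decomposition in terms of the spherical quantities $y_n,j_n$ and $S_n=j_n/y_n$ (citing \cite{LY-2Dobstacle} for the explicit splitting), whereas you work directly with $H_\nu^{(j)},Y_\nu,J_\nu$ and a multiplicative $(1+\varepsilon_n)$ error.
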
 

\begin{proof}
Since the spherical Hankel functions of the first and second kind are complex conjugate to each other, it is only needed to prove  the result for the first kind of spherical Hankel functions. 

Recalling the relationship between the spherical Bessel functions and the Bessel functions 
\[
h_n^{(j)}(z)=\sqrt{\frac{\pi}{2z}}H_{n+\frac{1}{2}}^{(j)}, \quad y_n(z)=\sqrt{\frac{\pi}{2z}}Y_{n+\frac{1}{2}}, \quad j_n(z)=\sqrt{\frac{\pi}{2z}}J_{n+\frac{1}{2}}, 
\]
we have from \cite{W-95} that
\[
j_{n}(z)\sim \frac{1}{z}\sqrt{\frac{1}{2e}}\left(\frac{ez}{2n+1}\right)^{n+1}, \quad y_n(z)\sim -\frac{1}{z}\sqrt{\frac{2}{e}}\left(\frac{ez}{2n+1}\right)^{-n}.
\]
Let $S_n(z)=\frac{j_n(z)}{y_n(z)}$. Then it is easy to check that
\[
S_n(z)\sim -\frac{1}{2}\left(\frac{ez}{2n+1}\right)^{2n+1}.
\]

It is shown in \cite{LY-2Dobstacle} that
\begin{align*}
& \left|\frac{h_{n}^{(1)}(\kappa_pR)}{h_{n}^{(1)}(\kappa_p R')}-\frac{h_{n}^{(1)}(\kappa_s R)}{h_{n}^{(1)}(\kappa_s R')}\right|
\leq \left|\frac{y_n(\kappa_p R)}{y_n(\kappa_p R')}-\frac{y_n(\kappa_s R)}{y_n(\kappa_s R')}\right|+\left|\frac{y_n(\kappa_p R)}{y_n(\kappa_p R')}\frac{S_n(\kappa_p R')}{1-{\rm i}S_n(\kappa_p R')}\right| \\
&\quad +\left|\frac{y_n(\kappa_p R)}{y_n(\kappa_p R')}\frac{S_n(\kappa_p R)}{1-{\rm i}S_n(\kappa_p R')}\right| +\left|\frac{y_n(\kappa_s R)}{y_n(\kappa_s R')}\frac{S_n(\kappa_s R')}{1-{\rm i}S_n(\kappa_s R')}\right|+\left|\frac{y_n(\kappa_s R)}{y_n(\kappa_s R')}\frac{S_n(\kappa_s R)}{1-{\rm i}S_n(\kappa_s R')}\right|. 
\end{align*}
A simple computation yields that
\begin{align*}
\left|\frac{S_n(z R)}{1-{\rm i}S_n(z R')}\right|\leq \left(\frac{ezR}{2n+1}\right)^{2n+1},\quad\left|\frac{S_n(z R')}{1-{\rm i}S_n(z R')}\right|\leq \left(\frac{ez R'}{2n+1}\right)^{2n+1}. 
\end{align*}
Combining the above estimates, we have for $R>\hat R$ and $\kappa_s>\kappa_p$ that 
\begin{align*}
& \left|\frac{y_n(\kappa_p R)}{y_n(\kappa_p R')}\frac{S_n(\kappa_p R')}{1-{\rm i}S_n(\kappa_p R')}\right| +\left|\frac{y_n(\kappa_p R)}{y_n(\kappa_p R')}\frac{S_n(\kappa_p R)}{1-{\rm i}S_n(\kappa_p R')}\right| +\left|\frac{y_n(\kappa_s R)}{y_n(\kappa_s R')}\frac{S_n(\kappa_s R')}{1-{\rm i}S_n(\kappa_s R')}\right|\\
&\quad +\left|\frac{y_n(\kappa_s R)}{y_n(\kappa_s R')}\frac{S_n(\kappa_s R)}{1-{\rm i}S_n(\kappa_s R')}\right| \leq 2\left(\frac{e\kappa_s R}{2n+1}\right)^{2n+1}\left(\left|\frac{y_n(\kappa_p R)}{y_n(\kappa_p R')}\right| +\left|\frac{y_n(\kappa_s R)}{y_n(\kappa_s R')}\right|\right).
\end{align*}

By Lemma \ref{Bessel4}, we have
\begin{align*}
\left|\frac{y_{n}(\kappa_p R)}{y_{n}(\kappa _p R')}-\frac{y_{n}(\kappa_s R)}{y_{n}(\kappa_s R')}\right|
&=\sqrt{\frac{R'}{R}}\left|\frac{Y_{\nu+1}(\kappa_p R)}{Y_{\nu+1}(\kappa _p R')}-\frac{Y_{\nu+1}(\kappa_s R)}{Y_{\nu+1}(\kappa_s R')}\right|\\
&\leq \frac{7}{3}\frac{\kappa_s(\kappa_s-\kappa_p) }{n+1}R\left(R-R'\right)\left(\frac{R'}{R}\right)^{n+1}.
\end{align*}
Therefore,
\begin{align*}
\left|\frac{h_{n}^{(1)}(\kappa_p R)}{h_{n}^{(1)}(\kappa_p R')}-\frac{h_{n}^{(1)}(\kappa_s R)}{h_{n}^{(1)}(\kappa_s R')}\right|
&\leq\frac{7}{3}\frac{\kappa_s(\kappa_s-\kappa_p) }{n+1}R\left(R-R'\right)\left(\frac{R'}{R}\right)^{n+1}\\
&\quad +2\left(\frac{e\kappa_s R}{2n+1}\right)^{2n+1}
\left(\left|\frac{y_n(\kappa_p R)}{y_n(\kappa_p R')}\right| 
+\left|\frac{y_n(\kappa_s R)}{y_n(\kappa_s R')}\right|\right)\\
&\leq \frac{14}{3}\frac{\kappa_s(\kappa_s-\kappa_p) }{n}R\left(R-R'\right)\left(\frac{R'}{R}\right)^{n+1},
\end{align*}
which completes the proof. 
\end{proof}

\begin{lemma}\label{LambdaN}
Given $\kappa_p, \kappa_s,$ and $R$, it holds for sufficiently large $n$ that 
\begin{eqnarray}\label{Lambdan}
\Lambda_n(R)= -\left(\kappa_p^2+\kappa_s^2\right)\frac{R^2}{2}+O\left(\frac{1}{n}\right).
\end{eqnarray}
\end{lemma}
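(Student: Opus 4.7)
The plan is to derive a uniform asymptotic expansion
\[
z_n^{(1)}(t) = -(n+1) + \frac{t^2}{2n+1} + O(1/n^2),\qquad n\to\infty,
\]
for any fixed $t>0$, and then substitute this into the definition \eqref{DtN_Lambda} of $\Lambda_n(R)$ and collect leading terms.

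First I would reduce the spherical Hankel logarithmic derivative to a cylindrical Bessel quantity. Writing $h_n^{(1)} = j_n + {\rm i}\,y_n$ and using the identity $y_n(t) = \sqrt{\pi/(2t)}\,Y_{n+1/2}(t)$, a direct differentiation yields
\[
t\,\frac{y_n'(t)}{y_n(t)} = -\frac{1}{2} + G_{n+1/2}(t),
\]
with $G_\nu$ the quantity estimated in Lemma \ref{Bessel3}. Since $S_n(t)=j_n(t)/y_n(t) \sim -\frac{1}{2}\bigl(et/(2n+1)\bigr)^{2n+1}$ is exponentially small (as already exploited in the proof of Lemma \ref{Bessel5}), a short manipulation of $(j_n'+{\rm i}y_n')/(j_n+{\rm i}y_n)$ gives
\[
\frac{h_n^{(1)\prime}(t)}{h_n^{(1)}(t)} = \frac{y_n'(t)}{y_n(t)} + \text{(exponentially small in } n\text{)},
\]
so $z_n^{(1)}(t) = G_{n+1/2}(t) - \frac{1}{2} + \text{(exponentially small)}$.

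Second, I would apply Lemma \ref{Bessel3} with $\nu = n+\frac{1}{2}$ to obtain $G_{n+1/2}(t) = -(n+\frac{1}{2}) + \frac{t^2}{2n+1} + O(1/n^2)$, which combined with the previous display gives the claimed expansion of $z_n^{(1)}(t)$.

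Finally, setting $a_n = z_n^{(1)}(\kappa_p R)$ and $b_n = z_n^{(1)}(\kappa_s R)$, I would expand $\Lambda_n(R) = a_n(1+b_n) - n(n+1)$ using
\[
a_n = -(n+1) + \frac{\kappa_p^2 R^2}{2n+1} + O(1/n^2),\qquad 1+b_n = -n + \frac{\kappa_s^2 R^2}{2n+1} + O(1/n^2).
\]
Multiplying out and using $n/(2n+1) = 1/2 + O(1/n)$ and $(n+1)/(2n+1) = 1/2 + O(1/n)$ collapses the cross terms to $-\frac{1}{2}(\kappa_p^2+\kappa_s^2)R^2 + O(1/n)$, leaving $a_n(1+b_n) = n(n+1) - \frac{1}{2}(\kappa_p^2+\kappa_s^2)R^2 + O(1/n)$; subtracting $n(n+1)$ yields the claim. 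The only real obstacle is careful bookkeeping of the error terms: since $a_n$ and $1+b_n$ are each of magnitude $n$ with sub-leading corrections of size $O(1/n)$, the $O(1/n^2)$ remainders in each factor get amplified by factors of order $n$ in the product, which is exactly why $O(1/n)$ (and no better) appears in the final expansion. The exponentially small contributions coming from $S_n$ play no role at any polynomial order.
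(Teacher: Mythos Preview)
Your proposal is correct and follows essentially the same route as the paper: both reduce $z_n^{(1)}(t)$ to $G_{n+1/2}(t)-\tfrac12$ via the spherical-to-cylindrical Bessel relation (you are slightly more explicit in quantifying the $j_n/y_n$ correction as exponentially small, whereas the paper simply writes ``$\sim$''), then invoke Lemma~\ref{Bessel3} with $\nu=n+\tfrac12$ to obtain the expansion $z_n^{(1)}(t)=-(n+1)+t^2/(2n)+O(1/n^2)$, and finally substitute into \eqref{DtN_Lambda}. Your final bookkeeping of the product $a_n(1+b_n)$ is also a bit more explicit than the paper's, but the arguments are the same.
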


\begin{proof}
It is clear to note for sufficiently large $n$ that 
\[
z^{(1)}_n(t)=\frac{th_n^{(1)'}(t)}{h_n^{(1)}(t)}\sim \frac{ty_n^{(1)'}(t)}{y_n^{(1)}(t)}. 
\]
We have from the recursive equation of the spherical Bessel functions that 
\begin{align*}
\frac{ty_n^{(1)'}(t)}{y_n^{(1)}(t)} = t\left[\frac{y_{n-1}(t)}{y_n(t)}-\frac{n+1}{t}\right]=  t\left[\frac{Y_{\nu}(t)}{Y_{\nu+1}(t)}-\frac{\nu+1}{t}\right]-\frac{1}{2}=G_{\nu+1}(t)-\frac{1}{2}. 
\end{align*}
By Lemma \ref{Bessel3}, we get 
\begin{align*}
z^{(1)}_{n}(t)&=G_{\nu+1}(t)-\frac{1}{2}\leq -n-1+\frac{t^2}{2n}+\frac{11}{12}\frac{t^2}{n^2}+O\left(\frac{1}{n^3}\right),\\
z^{(1)}_{n}(t)&=G_{\nu+1}(t)-\frac{1}{2}\geq -n-1+\frac{t^2}{2n}-\frac{5}{12}\frac{t^2}{n^2}+O\left(\frac{1}{n^3}\right),
\end{align*}
which give
\begin{equation}\label{Thm2_bessel}
-n-1+\frac{t^2}{2n}-\frac{5}{12}\frac{t^2}{n^2}+O\left(\frac{1}{n^3}\right)\leq z^{(1)}_{n}(t)\leq -n-1+\frac{t^2}{2n}+\frac{11}{12}\frac{t^2}{n^2}+O\left(\frac{1}{n^3}\right).
\end{equation}
The proof is completed by substituting \eqref{Thm2_bessel} into \eqref{DtN_Lambda}.
\end{proof}

\section{The dual problems}\label{appendixdual}

The dual problems in \eqref{section4_HelmholtzD} for the Helmholtz equation and the Maxwell equation are considered in 
\cite{bzh-2020} and \cite{bz-2020}, respectively. For the self-contained purpose, we summarize the related results here.

\begin{lemma}\label{step21}
Let $\hat{\zeta}=\frac{1}{\lambda+2\mu}\zeta$. The boundary value problem
\begin{equation*}
\begin{cases}
\Delta g+\kappa_p^2 g=-\hat{\zeta} \quad &{\rm in} ~ B_R\setminus \overline{B_{R'}},\\
\partial_{\rho}g=\mathscr{T}_1^*g \quad & {\rm on} ~ \partial B_R,\\
g=g \quad  &{\rm on} ~ \partial B_{R'}
\end{cases}
\end{equation*}
has a unique solution given by 
\begin{equation}\label{3Ddual_solutiong}
g_n^m(\rho)=S^p_n(\rho)g_n^m(R')+\frac{{\rm i}\kappa_p}{2}\int_{R'}^{\rho} t^2 W^p_n(\rho, t)\hat{\zeta}_n^m(t){\rm d}t
+\frac{{\rm i}\kappa_p}{2}\int_{R'}^R t^2 S^p_n(t) W^p_n(R', \rho)\hat{\zeta}_n^m(t){\rm d}t,
\end{equation}
where $g_n^m$ and $\hat{\zeta}_n^m$ are the Fourier coefficients of $g$ and $\hat{\zeta}$ with respect to the basis functions $X_n^m$, and 
\[
S^p_n(\rho)=\frac{h_n^{(2)}(\kappa_p \rho)}{h^{(2)}_n(\kappa_p R')},\quad W^p_n(\rho, t)={\rm det}\,
\begin{bmatrix}
h_n^{(1)}(\kappa_p \rho) & h_n^{(2)}(\kappa_p \rho)\\
h_n^{(1)}(\kappa_p t) & h_n^{(2)}(\kappa_p t)
\end{bmatrix}.
\]
\end{lemma}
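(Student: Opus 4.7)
The plan is to reduce the boundary value problem to a family of decoupled scalar ODEs by spherical-harmonic expansion, then solve each one by variation of parameters with a Green's function tailored to the two boundary conditions. Writing $g=\sum g_n^m(\rho)X_n^m$ and $\hat\zeta=\sum\hat\zeta_n^m(\rho)X_n^m$ and invoking the Laplacian identity \eqref{lapspherical}, the PDE is equivalent, for each $(n,m)$, to the radial spherical Bessel equation
\begin{equation*}
g_n^{m''}(\rho)+\frac{2}{\rho}g_n^{m'}(\rho)+\left(\kappa_p^2-\frac{n(n+1)}{\rho^2}\right)g_n^m(\rho)=-\hat\zeta_n^m(\rho),\qquad \rho\in(R',R),
\end{equation*}
with Dirichlet data $g_n^m(R')$ prescribed; since $\mathscr{T}_1^*$ acts diagonally as multiplication by $\overline{z_n^{(1)}(\kappa_p R)}/R=z_n^{(2)}(\kappa_p R)/R$, the adjoint-DtN condition at $R$ becomes $g_n^{m'}(R)=z_n^{(2)}(\kappa_p R)\,g_n^m(R)/R$.

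Next I would pick two homogeneous solutions adapted to the two endpoints: $S_n^p(\rho)=h_n^{(2)}(\kappa_p\rho)/h_n^{(2)}(\kappa_p R')$, which satisfies the adjoint-DtN condition at $R$ automatically (the logarithmic derivative of $h_n^{(2)}$ at $\kappa_p R$ is exactly $z_n^{(2)}(\kappa_p R)/R$) and is normalized so that $S_n^p(R')=1$; and $W_n^p(R',\rho)$, which is a homogeneous solution vanishing at $\rho=R'$. The superposition $g_n^m(\rho)=S_n^p(\rho)g_n^m(R')+g_n^{m,\mathrm{part}}(\rho)$ matches the Dirichlet data exactly, so it remains to produce a particular solution that vanishes at $R'$ and satisfies the adjoint-DtN condition at $R$. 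A standard variation-of-parameters calculation, with Sturm--Liouville weight $\rho^2$ and the Wronskian identity $h_n^{(1)}(z)\partial_z h_n^{(2)}(z)-h_n^{(2)}(z)\partial_z h_n^{(1)}(z)=-2\mathrm{i}/z^2$, yields the prefactor $\mathrm{i}\kappa_p/2$ together with the standard two-piece Green's function $G(\rho,t)$ proportional to $W_n^p(R',\min(\rho,t))\,S_n^p(\max(\rho,t))$.

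The last step is to reconcile this symmetric two-piece form with the compact one-sided form in the statement. Expanding the determinant $W_n^p$ and using the definition of $S_n^p$ yields the algebraic identity
\begin{equation*}
W_n^p(R',t)\,S_n^p(\rho)=W_n^p(\rho,t)+S_n^p(t)\,W_n^p(R',\rho)\qquad\text{for }R'\le t\le\rho,
\end{equation*}
which lets me rewrite the lower-half Green's function contribution as $\int_{R'}^\rho t^2\,W_n^p(\rho,t)\hat\zeta_n^m\,{\rm d}t$ plus a piece that merges with the upper-half contribution $\int_\rho^R t^2\,S_n^p(t)W_n^p(R',\rho)\hat\zeta_n^m\,{\rm d}t$ into the single integral $\int_{R'}^R t^2\,S_n^p(t)W_n^p(R',\rho)\hat\zeta_n^m\,{\rm d}t$ in the statement. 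Direct differentiation then verifies the ODE, the Dirichlet trace at $R'$ (both integrands vanish there, from $W_n^p(\rho,\rho)=0$ and $W_n^p(R',R')=0$), and the adjoint-DtN condition at $R$ (from the definition of $z_n^{(2)}$).

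The main obstacle will be the algebraic identity that rewrites the symmetric two-piece Green's function into the asymmetric one-sided form of the statement; this form is natural for the asymptotic estimates used in Lemma \ref{3Ddual_estimatep} but obscures the underlying Sturm--Liouville structure, and bookkeeping errors are most likely here. Uniqueness is a separate point, handled by standard Fredholm theory for the scalar impedance-Dirichlet Helmholtz problem on the spherical shell, which is the complex conjugate of the forward well-posedness already established in \cite{LY-ipi,bzh-2020}.
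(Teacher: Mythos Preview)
Your proposal is correct. The paper itself does not actually prove this lemma: it is stated in the appendix with the remark that the argument is carried out in \cite{bzh-2020}, and only the formula and its immediate consequences \eqref{appendix_gR}--\eqref{appendix_g'R'} are recorded. Your variation-of-parameters construction with the spherical Bessel Green's function, together with the Wronskian identity $h_n^{(1)}h_n^{(2)'}-h_n^{(2)}h_n^{(1)'}=-2{\rm i}/z^2$ producing the prefactor ${\rm i}\kappa_p/2$, is exactly the standard route and is what the cited reference does. The algebraic identity you isolate, $W_n^p(R',t)\,S_n^p(\rho)=W_n^p(\rho,t)+S_n^p(t)\,W_n^p(R',\rho)$, is indeed the bookkeeping step that converts the symmetric two-piece Green's function into the one-sided form of the statement, and it checks out by direct expansion.
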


Taking $\rho=R$ in \eqref{3Ddual_solutiong} yields 
\begin{equation}\label{appendix_gR}
g_n^m(R)=S_n^p(R) g_n^m(R')+\frac{{\rm i}\kappa_p}{2}\int_{R'}^R t^2 S_n^p(R) W^p_n(R', t)\hat{\zeta}_n^m(t){\rm d}t.
\end{equation}
Taking the derivative of \eqref{3Ddual_solutiong} and estimating it at $\rho=R'$, we get 
\begin{equation}\label{appendix_g'R'}
g_n^{m'}(R')=\frac{1}{R'}z_n^{(2)}(\kappa_p R') g_n^m(R')+\frac{2\kappa_p}{\pi R'}\int_{R'}^R t^2 S_n^p(t)
\hat{\zeta}_n^m(t){\rm d}t.
\end{equation}

\begin{lemma}\label{step22}
Let $\hat{Z}_{2n}^m=\frac{1}{\mu}Z_{2n}^{m}$, where $Z_{2n}^m$ are the Fourier coefficients of $Z$ under the basis $V_n^m$. The two-point boundary value problem
\begin{equation*}
\begin{cases}
q_{2n}^{m''}(\rho)+\frac{2}{\rho}q_{2n}^{m'}(\rho)+\left(\kappa_s^2-\frac{n(n+1)}{\rho^2}\right)q_{2n}^m(\rho)=-\hat{Z}_{2n}^{m}(\rho),\quad &\rho\in(R', R),\\
q_{2n}^{m'}(R)-\frac{z_n^{(2)}(\kappa_s R)}{R} q_{2n}^m(R)=0, \quad &\rho=R,\\
q_{2n}^{m}(R')=q_{2n}^{m}(R'),\quad & \rho=R'
\end{cases}
\end{equation*}
has a unique solution given by 
\begin{equation}\label{3Ddual_solutionq2}
q_{2n}^m(\rho)=S^s_n(\rho)q_{2n}^m(R')+\frac{{\rm i}\kappa_s}{2}\int_{R'}^{\rho} t^2 W^s_n(\rho, t)\hat{Z}_{2n}^m(t){\rm d}t
+\frac{{\rm i}\kappa_s}{2}\int_{R'}^R t^2 S^s_n(t) W^s_n(R', \rho)\hat{Z}_{2n}^m(t){\rm d}t,
\end{equation}
where
\[
S^s_n(\rho)=\frac{h_n^{(2)}(\kappa_s \rho)}{h^{(2)}_n(\kappa_s R')},\quad W^s_n(\rho, t)={\rm det}\,
\begin{bmatrix}
h_n^{(1)}(\kappa_s \rho) & h_n^{(2)}(\kappa_s \rho)\\
h_n^{(1)}(\kappa_s t) & h_n^{(2)}(\kappa_s t)
\end{bmatrix}.
\]
\end{lemma}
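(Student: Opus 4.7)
The plan is to construct the solution by the classical variation of parameters procedure for a second order linear ODE with separated boundary data, exploiting the fact that the homogeneous equation is the spherical Bessel equation. The fundamental system on $(R',R)$ is $\{h_n^{(1)}(\kappa_s\rho),\,h_n^{(2)}(\kappa_s\rho)\}$, and in the variable $\rho$ their Wronskian computes to $W[h_n^{(1)}(\kappa_s\cdot),h_n^{(2)}(\kappa_s\cdot)](\rho) = -\tfrac{2{\rm i}}{\kappa_s\rho^2}$ via the standard spherical Hankel identity $h_n^{(1)}(z)h_n^{(2)'}(z)-h_n^{(1)'}(z)h_n^{(2)}(z)=-2{\rm i}/z^2$. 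This Wronskian is precisely what accounts for the weight $t^2$ and the prefactor ${\rm i}\kappa_s/2$ that appear in the stated formula.

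First I would handle the homogeneous part. Note that $S_n^s(\rho)=h_n^{(2)}(\kappa_s\rho)/h_n^{(2)}(\kappa_s R')$ satisfies the homogeneous ODE by construction, gives $S_n^s(R')=1$, and automatically satisfies the transparent boundary condition at $\rho=R$, because $z_n^{(2)}(\kappa_s R)=\kappa_sR\,h_n^{(2)'}(\kappa_sR)/h_n^{(2)}(\kappa_sR)$. Hence the term $S_n^s(\rho)\,q_{2n}^m(R')$ already realizes the Dirichlet datum and is compatible with the TBC at $R$. Applying variation of parameters with $f(\rho)=-\hat Z_{2n}^m(\rho)$ and the above Wronskian produces a particular solution
\[
q_p(\rho)=\frac{{\rm i}\kappa_s}{2}\int_{R'}^{\rho}t^2 W_n^s(\rho,t)\hat Z_{2n}^m(t)\,{\rm d}t,
\]
which vanishes at $\rho=R'$; this is the second term of the formula.

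The remaining task is to correct $q_p$ so that the TBC at $\rho=R$ is met, without disturbing the Dirichlet condition at $\rho=R'$. The correction must be a homogeneous solution that vanishes at $\rho=R'$, and the unique (up to scale) such combination is $\rho\mapsto W_n^s(R',\rho)$. Writing the ansatz $q(\rho)=S_n^s(\rho)q_{2n}^m(R')+q_p(\rho)+C\,W_n^s(R',\rho)$, I would impose the TBC at $\rho=R$; using the integrated-by-parts form $q_p'(R)=\tfrac{{\rm i}\kappa_s^2}{2}\int_{R'}^R t^2[h_n^{(1)'}(\kappa_sR)h_n^{(2)}(\kappa_st)-h_n^{(2)'}(\kappa_sR)h_n^{(1)}(\kappa_st)]\hat Z_{2n}^m(t)\,{\rm d}t$ (the source boundary terms at $t=\rho$ cancel by symmetry of the integrand) and the identity $z_n^{(1)}(\kappa_s R)-z_n^{(2)}(\kappa_s R)=2{\rm i}/(\kappa_sR\,h_n^{(1)}(\kappa_sR)h_n^{(2)}(\kappa_sR))$, the condition $q'(R)-(z_n^{(2)}(\kappa_sR)/R)q(R)=0$ reduces to $C=\frac{{\rm i}\kappa_s}{2}\int_{R'}^R t^2 S_n^s(t)\hat Z_{2n}^m(t)\,{\rm d}t$, which is exactly the third term.

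Finally I would verify the formula directly: the ODE holds by variation of parameters and the homogeneity of $S_n^s,W_n^s(R',\cdot)$; at $\rho=R'$ the second integral is empty and $W_n^s(R',R')=0$ (determinant with identical rows), so only $S_n^s(R')q_{2n}^m(R')=q_{2n}^m(R')$ survives; at $\rho=R$ the choice of $C$ makes the TBC hold. Uniqueness follows from the dissipative sign of $\Im z_n^{(2)}(\kappa_sR)$ established in \cite{LY-ipi}, which rules out nontrivial homogeneous solutions. The main obstacle is the bookkeeping in the third step, namely matching the TBC at $\rho=R$ cleanly; this is where the Hankel Wronskian identity has to be applied carefully to collapse the combination $q_p'(R)-(z_n^{(2)}(\kappa_sR)/R)q_p(R)$ into the explicit constant $C$.
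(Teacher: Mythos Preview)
The paper does not actually prove this lemma; it merely states the result and cites \cite{bzh-2020} and \cite{bz-2020} for the details, so there is no in-paper argument to compare against. Your variation-of-parameters construction is correct and is the standard route: the Wronskian identity $h_n^{(1)}(z)h_n^{(2)'}(z)-h_n^{(1)'}(z)h_n^{(2)}(z)=-2{\rm i}/z^2$ indeed produces the weight $t^2$ and prefactor ${\rm i}\kappa_s/2$, the choice of $S_n^s$ handles both the Dirichlet datum at $R'$ and the TBC at $R$, and your computation of the correction constant $C$ via the identity for $z_n^{(1)}-z_n^{(2)}$ is the right mechanism for matching the TBC. This is presumably the same argument carried out in the cited references, so your plan is both correct and aligned with the intended approach.
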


Taking $\rho=R$ in \eqref{3Ddual_solutionq2} gives
\begin{equation}\label{appendix_q2R}
q_{2n}^m(R)=S_n^s(R) q_{2n}^m(R')+\frac{{\rm i}\kappa_s}{2}\int_{R'}^R t^2 S_n^s(R) W^s_n(R', t)\hat{Z}_{2n}^m(t){\rm d}t.
\end{equation}
Taking the derivative of \eqref{3Ddual_solutionq2} and estimating it at $\rho=R'$, we obtain 
\begin{equation}\label{solutiondq2}
q_{2n}^{m'}(R')=\frac{1}{R'}z_n^{(2)}(\kappa_s R') q_{2n}^m(R')+\frac{2\kappa_s}{\pi R'}\int_{R'}^R t^2 S_n^s(t)\hat{Z}_{2n}^m(t){\rm d}t.
\end{equation}

\begin{lemma}\label{step23}
Let $v_{n}^m(\rho)=\rho q_{3n}^{m}(\rho)$. Then $v_{n}^{m}$ satisfies the two-point boundary value problem
\begin{equation*}
\begin{cases}
v_{n}^{m''}(\rho)+\frac{2}{\rho}v_{n}^{m'}(\rho)+\left(\kappa_s^2-\frac{n(n+1)}{\rho^2}\right)v_{n}^m(\rho)
=-\beta_{n}^{m}(\rho),\quad &\rho\in(R', R),\\
v_{n}^{m'}(R)-\frac{z_n^{(2)}(\kappa_s R)}{R} v_{n}^m(R)=0, \quad &\rho=R,\\
v_{n}^{m}(R')=v_{n}^{m}(R'), \quad &\rho=R',
\end{cases}
\end{equation*}
where $\beta_n^{m}(\rho)=\frac{1}{\mu}\rho Z_{3n}^m(\rho)$. Moreover, $v_n^m(R)$ and $v_n^{m'}(R')$ are given by 
\begin{align*}
v_{n}^m(R)&=S_n^s(R) v_{n}^m(R')+\frac{{\rm i}\kappa_s}{2}\int_{R'}^R t^2 S_n^s(R) W^s_n(R', t)\beta_{n}^m(t){\rm d}t,\\
v_{n}^{m'}(R')&=\frac{1}{R'}z_n^{(2)}(\kappa_s R') v_{n}^m(R')+\frac{2\kappa_s}{\pi R'}\int_{R'}^R t^2 S_n^s(t)
\beta_{n}^m(t){\rm d}t.
\end{align*}
\end{lemma}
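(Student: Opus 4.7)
The plan is to derive the ODE and boundary conditions satisfied by $v_n^m=\rho q_{3n}^m$, observe that they coincide with those of $q_{2n}^m$ up to the forcing term, and then quote Lemma \ref{step22} for the explicit representation. First, I would project the equation $-\nabla\times(\nabla\times\boldsymbol q)+\kappa_s^2\boldsymbol q=-\tfrac{1}{\mu}\boldsymbol Z$ onto $X_n^m\boldsymbol e_\rho$. Using \eqref{curlcurlspherical}, only the $\boldsymbol U_n^m$-piece and the $X_n^m\boldsymbol e_\rho$-piece of $\boldsymbol q$ feed this component, producing
\[
\frac{\sqrt{n(n+1)}}{\rho^2}\partial_\rho(\rho q_{1n}^m)-\frac{n(n+1)}{\rho^2}q_{3n}^m+\kappa_s^2 q_{3n}^m=-\frac{1}{\mu}Z_{3n}^m.
\]
I would then eliminate $q_{1n}^m$ via the divergence-free identity from \eqref{divspherical}, namely $\sqrt{n(n+1)}\,q_{1n}^m=\tfrac{1}{\rho}\partial_\rho(\rho^2 q_{3n}^m)$. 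Re-expressing in terms of $v_n^m$ (so $\rho q_{1n}^m=(v_n^m+\rho v_n^{m'})/\sqrt{n(n+1)}$ and $\partial_\rho(\rho q_{1n}^m)=(2v_n^{m'}+\rho v_n^{m''})/\sqrt{n(n+1)}$) and multiplying by $\rho$ yields the asserted ODE with forcing $-\beta_n^m$.

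For the boundary condition at $\rho=R$, I would unfold $(\nabla\times\boldsymbol q)\times\boldsymbol e_\rho=-{\rm i}\kappa_s\mathscr{T}_2^*\boldsymbol q_{\Gamma_R}$ in the spherical basis. Using \eqref{curlspherical} together with $\boldsymbol U_n^m\times\boldsymbol e_\rho=-\boldsymbol V_n^m$ and $\boldsymbol V_n^m\times\boldsymbol e_\rho=\boldsymbol U_n^m$, the $\boldsymbol U_n^m$-component at $\rho=R$ becomes
\[
\frac{1}{R}\partial_\rho(\rho q_{1n}^m)\big|_R-\frac{\sqrt{n(n+1)}}{R}q_{3n}^m(R)=\frac{-\kappa_s^2 R}{1+z_n^{(2)}(\kappa_s R)}q_{1n}^m(R),
\]
where the coefficient on the right comes from the diagonal action of $\mathscr{T}_2$ in \eqref{DtN_T2}, complex conjugation of its eigenvalues to form $\mathscr{T}_2^*$, and $\overline{z_n^{(1)}(\kappa_s R)}=z_n^{(2)}(\kappa_s R)$. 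Substituting the $v_n^m$-expressions for $q_{1n}^m$, $\partial_\rho(\rho q_{1n}^m)$ and $q_{3n}^m$, and then invoking the ODE at $\rho=R$ together with $\beta_n^m(R)=0$ (since $\boldsymbol Z(R)=0$) to eliminate $v_n^{m''}(R)$, a direct cancellation collapses the condition to $v_n^{m'}(R)=R^{-1}z_n^{(2)}(\kappa_s R)\,v_n^m(R)$. The condition at $\rho=R'$ is merely the rescaled Dirichlet trace $v_n^m(R')=R'q_{3n}^m(R')$.

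Since $v_n^m$ now satisfies the same scalar two-point BVP as $q_{2n}^m$ in Lemma \ref{step22} (with $\hat Z_{2n}^m$ replaced by $\beta_n^m$), the representation \eqref{3Ddual_solutionq2} transfers verbatim. Evaluating it at $\rho=R$ gives the stated formula for $v_n^m(R)$, while differentiating and evaluating at $\rho=R'$ produces the formula for $v_n^{m'}(R')$.

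The main obstacle I anticipate is the boundary reduction at $\rho=R$: the vectorial condition a priori couples all three Fourier coefficients of $\boldsymbol q$, and its collapse to a single scalar Robin condition in $v_n^m$ depends delicately on combining the ODE with the homogeneous data $\boldsymbol Z(R)=0$. Everything downstream is a direct transcription of Lemma \ref{step22}.
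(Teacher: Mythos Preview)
Your derivation is correct. The paper itself does not prove Lemma \ref{step23}; it merely records the statement as a summary of results from \cite{bz-2020} (see the opening sentence of Appendix \ref{appendixdual}). Your argument supplies a self-contained proof: the projection onto $X_n^m\boldsymbol e_\rho$ via \eqref{curlcurlspherical} together with the divergence-free relation from \eqref{divspherical} yields the ODE, the reduction of the Maxwell TBC to the scalar Robin condition is carried out exactly as you describe (and indeed hinges on $\boldsymbol Z(R)=0$ to kill $\beta_n^m(R)$), and the final formulas then follow verbatim from Lemma \ref{step22}. There is nothing to compare against in the paper beyond the citation.
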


Let $\hat{Z}_{3n}^m=\frac{1}{\mu}Z_{3n}^m$. We have from Lemma \ref{step23} that 
\begin{align}\label{appendix_q3R}
q_{3n}^m(R)&=\frac{R'}{R}S_n^s(R) q_{3n}^m(R')+\frac{{\rm i}\kappa_s}{2R}\int_{R'}^R t^3 S_n^s(R) W^s_n(R', t)
\hat{Z}_{3n}^m(t){\rm d}t,\\
\label{solutiondq3}
q_{3n}^{m'}(R')&=\frac{1}{R'}\left[z_n^{(2)}(\kappa_s R')-1\right] q_{3n}^m(R')+\frac{2\kappa_s}{\pi (R')^2}\int_{R'}^R t^3 S_n^s(t)\hat{Z}_{3n}^m(t){\rm d}t.
\end{align}


\begin{thebibliography}{99}


\bibitem{BA-AP-1973}
I. Babu\v{s}ka and A. Aziz, Survey Lectures on Mathematical Foundations of the Finite Element Method, in the Mathematical Foundation of the Finite Element Method with Application to the Partial Differential Equations, ed. by
A.Aziz, Academic Press, New York, 1973, 5--359.

\bibitem{BHSY-jmpa18}
G. Bao, G. Hu, J. Sun, and T. Yin, Direct and inverse elastic scattering from anisotropic media, J. Math. Pures Appl., 117 (2018), 263--301.

\bibitem{bzh-2020}
G. Bao, M. Zhang, B. Hu, and P. Li,  An adaptive finite element DtN method fro the three-dimensional acoustic scattering problem,
Discrete Contin. Dyn. Syst. Ser. B, 26 (2021), 61--79. 

\bibitem{bz-2020}
G. Bao, M. Zhang, X. Jiang, P. Li, and X. Yuan, An adaptive finite element DtN method for the electromagnetic scattering problem, preprint. 

\bibitem{BT-cpam80}
A. Bayliss and E. Turkel, Radiation boundary conditions for numerical simulation of waves, Comm. Pure Appl. Math., 33 (1980), 707--725.

\bibitem{BPT-mc10}
J. H. Bramble, J. E. Pasciak, and D. Trenev, Analysis of a finite PML approximation to the three dimensional elastic wave scattering problem, Math. Comp., 79 (2010), 2079--2101.

\bibitem{CXZ-mc16}
 Z. Chen, X. Xiang, and X. Zhang, Convergence of the PML method for elastic wave scattering problems, Math. Comp., 85 (2016), 2687--2714.

\bibitem{C-88}
P. G. Ciarlet, Mathematical Elasticity, vol. I : Three-Dimensional Elasticity, Studies in Mathematics and its Applications, North-Holland, Amsterdam, 1988.

\bibitem{CK-83}
D. Colton and R. Kress, Integral Equation Methods in Scattering Theory, Wiley, New York, 1983. 

\bibitem{ck-98}
D. Colton and R. Kress, Inverse Acoustic and Electromagnetic Scattering Theory, Springer-Verlag, Berlin, 1998.

\bibitem{EM-mc77}
B. Engquist and A. Majda, Absorbing boundary conditions for the numerical simulation of waves, Math. Comp., 31 (1977), 629--651.

\bibitem{GG-wm03}
G. K. G\"{a}chter and M. J. Grote, Dirichlet-to-Neumann map for three-dimensional elastic waves, Wave Motion, 37 (2003), 293--311.

\bibitem{GK-wm90}
D. Givoli and J. B. Keller, Non-reflecting boundary conditions for elastic waves, Wave Motion, 12 (1990), 261--279.

\bibitem{GK-jcp95}
M. Grote and J. B. Keller, On nonreflecting boundary conditions, J. Comput. Phys., 122 (1995), 231--243.

\bibitem{H-jnm12}
F. Hecht, New development in FreeFem++, J. Numer. Math., 20 (2012), 251--265.

\bibitem{HN-2011-jcam}
G. C. Hsiao, N. Nigam, J. E. Pasiak, and L. Xu, Error analysis of the DtN-FEM for the scattering problem in acoustic via Fourier analysis, J. Comput. Appl. Math., 235 (2011),  4949--4965.

\bibitem{jllz-2017-ccp}
X. Jiang, P. Li, J. Lv, and W. Zheng, An adaptive finite element method for the wave scattering with transparent boundary condition, J. Sci. Comput., 72 (2017), 936--956.

\bibitem{JLWWZ-18}
X. Jiang, P. Li, J. Lv, Z. Wang, H. Wu, and W. Zheng, An adaptive finite element DtN method for Maxwell's equations in biperiodic structures, arXiv:1811.12449. 

\bibitem{JLZ-cicp13}
X. Jiang, P. Li, and W. Zheng, Numerical solution of acoustic scattering by an adaptive DtN finite element method, Commun. Comput. Phys., 13 (2013), 1277--1244.

\bibitem{LL-86}
 L. D. Landau and E. M. Lifshitz, Theory of Elasticity, Theory of Elasticity, Oxford: Pergamon Press, 1986. 

\bibitem{LWWZ-ip16}
P. Li, Y. Wang, Z. Wang, and Y. Zhao, Inverse obstacle scattering for elastic waves, Inverse Problems, 32 (2016), 115018.

\bibitem{LY-ipi}
P. Li and X. Yuan, Inverse obstacle scattering for elastic waves in three dimensions, Inverse Problems and Imaging, 13 (2019), 545--573.

\bibitem{LY-2Dobstacle}
P. Li and X. Yuan, An adaptive finite element DtN method for the elastic wave scattering problem, arXiv:1903.03606.

\bibitem{LZZ-csam20}
Y. Li,  W. Zheng, and X. Zhu,  A CIP-FEM for high-frequency scattering problem with the truncated DtN boundary condition, CSIAM Trans. Appl. Math., 1 (2020), 530--560.

\bibitem{MK-jcp04}
M. Grote and C. Kirsch, Dirichlet-to-Neumann boundary conditions for multiple scattering problems, J. Comput. Phys., 201 (2004), 630--650.

\bibitem{M-03}
P. Monk, Finite Element Methods for Maxwell's Equations, Oxford University Press, 2003.

\bibitem{N-01}
J.-C. N\'{e}d\'{e}lec, Acoustic and Electromagnetic Equations Integral Representations for Harmonic Problems, Springer-Verlag, New York, 2001.

\bibitem{RS-15-arma}
L. Rondi and M. Sini, Stable determination of a scattered wave from its farfield pattern: the high frequency asymptotics, Arch. Ration. Mech. Anal., 218 (2015), 1--54. 

\bibitem{S-mc74}
A. H. Schatz, An observation concerning Ritz--Galerkin methods with indefinite bilinear forms, Math. Comp., 28 (1974), 959--962. 

\bibitem{T-an99}
H. Thomas, Radiation boundary conditions for the numerical simulation of waves, Acta Numer., 8 (1999), 47--106.

\bibitem{WBLLW-sinum15}
Z. Wang, G. Bao, J. Li, P. Li, and H. Wu, An adaptive finite element method for the diffraction grating problem with transparent boundary condition, SIAM J. Numer. Anal., 53 (2015), 1585--1607.

\bibitem{W-95}
G. N. Watson, A Treatise on the Theory of Bessel Functions, Cambridge University Press, 1995.

\bibitem{YBL-2020}
X. Yuan, G. Bao, and P. Li, An adaptive finite element DtN method for the open cavity scattering problems, CSIAM Trans. Appl. Math., 1 (2020), 316-345.

\end{thebibliography}
\end{document}